\documentclass[final]{siamltex}
\usepackage{amsmath,amsfonts,amssymb}
\usepackage{latexsym}
\usepackage{epsfig,overpic}
\usepackage{subfigure}
\usepackage{color}
\usepackage{showlabels}

\newcommand{\bI}{\mathbf I}

\newcommand{\PP}{\mathcal P}

\newcommand{\cV}{\mathcal{V}}

\newcommand{\bbf}{\mathbf f}
\newcommand{\bg}{\mathbf g}

\newcommand{\bn}{\mathbf n}

\newcommand{\bu}{\mathbf u}
\newcommand{\bv}{\mathbf v}
\newcommand{\bw}{\mathbf w}

\newcommand{\bz}{\mathbf z}
\newcommand{\bT}{\mathbf T}
\newcommand{\bphi}{\boldsymbol{\phi}}

\newcommand{\M}{\mathcal M}

\newcommand{\T}{\mathcal T}
\newcommand{\V}{\mathcal V}
\newcommand{\Div}{\mathop{\rm div}}

\newcommand{\cD}{\mathcal D}
\newcommand{\cF}{\mathcal F}

\newcommand{\cO}{\mathcal O}

\newcommand{\cS}{\mathcal S}

\newcommand{\nn}{\mathbb{N}}



\renewcommand{\div}{\textrm{div}\ \!}

\DeclareGraphicsExtensions{.pdf,.eps,.ps,.eps.gz,.ps.gz,.eps.Y}

\newcommand{\la}{\left\langle}
\newcommand{\ra}{\right\rangle}

\newcommand{\deriv}[2]{\frac{\partial #1}{\partial #2}}
\newcommand{\ddt}[1]{\frac{\partial #1}{\partial t}}
\newcommand{\bsigma}{\boldsymbol{\sigma}}

\newtheorem{assumption}{Assumption}[section]

\newtheorem{remark}{Remark}[section]

\begin{document}
\title{A time dependent Stokes interface problem: well-posedness and space-time finite element discretization}
\author{Igor Voulis
\and Arnold Reusken\thanks{Institut f\"ur Geometrie und Praktische  Mathematik, RWTH-Aachen
University, D-52056 Aachen, Germany (reusken@igpm.rwth-aachen.de,voulis@igpm.rwth-aachen.de).}
}
\maketitle
\begin{abstract}  In this paper a time dependent Stokes problem that is motivated by  a standard sharp interface model for the fluid dynamics of  two-phase flows is studied. This  Stokes interface problem has discontinuous  density and viscosity coefficients and a pressure solution that is discontinuous across an evolving interface. This strongly simplified two-phase Stokes equation is considered to  be a good model problem for the development and analysis of finite element discretization methods for two-phase flow problems. In view of the \emph{un}fitted finite element methods that are often used for  two-phase flow simulations, we are particularly interested in a well-posed variational formulation of this Stokes interface problem in a Euclidean setting.
Such  well-posed weak formulations, which are not known in the literature, are the main results of this paper.  Different variants are considered, namely one with suitable spaces of divergence free functions, a discrete-in-time version of it, and variants in which the divergence free constraint in the solution space is treated by a pressure Lagrange multiplier. The discrete-in-time variational formulation involving the pressure variable for the divergence free constraint  is a natural starting point for a space-time finite element discretization. Such a  method is introduced and  results of numerical experiments with this method are presented.  
\end{abstract}
%
\section[Introduction]{Introduction}\label{sec:introduction}
Let $\Omega \subset \Bbb{R}^d$ be an open bounded connected domain  and $I:=(0,T)$ a time interval.    On the space-time cylinder $\Omega \times I$ we consider the 
following  standard sharp interface model (in strong formulation) for the fluid dynamics of a  two-phase incompressible flow, cf.~\cite{HoFAbels2016,Pruss2016,Reusken}:
\begin{align}
 \left\{
  \begin{aligned}
    \rho_i(\deriv{\bu}{t}+(\bu\cdot\nabla)\bu) &=  \Div\bsigma_i + \bg_i  \\
    \Div\bu &= 0
  \end{aligned}
  \right. & \qquad\text{in $\Omega_i(t) $,\quad $i=1,2$},
   \label{eq:NSseparat} \\[2ex]
  [\bsigma\bn_\Gamma] = -\tau\kappa\bn_\Gamma  \quad \text{on}~~\Gamma(t), \label{coupl1} \\
 [\bu]= 0 \quad \text{on}~~\Gamma(t),
    \label{coupl2} \\
 V_\Gamma= \bu \cdot\bn_\Gamma \quad \text{on}~~\Gamma(t).
    \label{eq:IMcond}
\end{align}
Here $\Gamma(t)= \overline{\Omega_1(t)}\cap \overline{\Omega_2(t)}$ denotes the (sharp) interface,  $\bsigma_i= - p \bI +\mu_i  \big( \nabla \bu + (\nabla \bu)^T \big)$ the Newtonian stress tensor and $V_\Gamma$ is the normal velocity of the interface.  The density and viscosity, $\rho_i$ and $\mu_i, i=1,2,$ are assumed to be constant in each phase. The constant $\tau \geq 0$ is the surface tension coefficient and $\kappa$ is the mean curvature of $\Gamma$, i.e., $\kappa(x)=\Div \bn_\Gamma(x)$ for $x \in \Gamma$. Unknowns are the velocity $\bu= \bu(x,t)$, the pressure $p = p(x,t)$ and the (evolving) interface $\Gamma(t)$.  To make the problem well-posed one needs suitable initial and boundary conditions for $\bu$ and $\Gamma$. Due to the coupling of the interface dynamics and the fluid dynamics in the two bulk phases, this is a highly nonlinear problem. There is extensive literature on existence of solutions and well-posedness of different formulations of this problem. Most publications on these topics study 
quite regular solutions (in H\"older spaces) and deal with well-posedness locally in time or global existence of solutions close to equilibrium states (e.g., \cite{Solonnikov1995,Solonnikov2007,Solonnikov2012,Pruss2009,Pruss2011}). Often simplifying assumptions are used, for example, $\bg_i=0$, $\tau=0$ or constant density ($\rho_1=\rho_2$). In other studies weaker solution concepts are used, for example, in \cite{Nouri1995,Nouri1997} the notion of renormalized solutions \cite{DiPernaLions} of transport equations is used  to derive an existence result (for $\tau=0$) and  in \cite{Abels2007} existence of  so-called measure-valued varifold solutions is shown (for constant density). Here we do not give an overview of the extensive literature in this research field; for this we refer the interested reader to the literature discussion in the recent book  \cite{Pruss2016}.  

We are interested in the development and analysis of finite element  discretization methods for the two-phase flow problem given above. Finite element methods for this problem class can be found in, e.g.,  \cite{Reusken,Bansch2001,Croce2010,BotheReusken}. We are not aware of any literature in which rigorous error analysis of such finite element methods is presented. Only very few partial results, e.g. on discrete stability as in \cite{Bansch2001}, are known. This lack of analysis is clearly related to the strong nonlinearity of the problem \eqref{eq:NSseparat}-\eqref{eq:IMcond}. We also note that approaches and results available in the mathematical literature  on existence of solutions and well-posedness of this problem turn out not to be very useful for the analysis of finite element discretization methods.
In view of this, we introduce and analyze a much simpler (linear) Stokes interface problem which, however, is motivated by and closely related to the two-phase flow problem given above. We now derive this Stokes interface problem. In  almost all numerical simulation methods for  \eqref{eq:NSseparat}-\eqref{eq:IMcond} one uses an iterative decoupling technique in which the interface evolution is decoupled from the flow problems in the subdomains. For the interface representation and numerical propagation one can use, for example, the  level set method and given an approximation of $\Gamma(t)$ for $t$ in a (small) time interval one then discretizes the coupled Navier-Stokes equations in the subdomains. These Navier-Stokes equations are usually linearized by inserting a known approximation of the velocity in the first argument of the quadratic term $(\bu\cdot\nabla)\bu$. These two subproblems (interface propagation and solution of flow problem in the subdomains) can be coupled by several different iterative 
methods. This decoupling and linearization procedure motivates the following simplifying assumptions. 
%
 Firstly, we assume a \emph{given}  sufficiently smooth (specified below) flow field  $\bw=\bw(x,t) \in \Bbb{R}^d$ on $Q:= \Omega \times I$, with $\Div \bw =0 $ on $Q$, which transports the interface. Instead of the interface dynamics condition $V_\Gamma= \bu \cdot\bn_\Gamma$ we impose $V_\Gamma= \bw \cdot\bn_\Gamma$. This implies that the interface evolution is completely determined by $\bw$ and the Navier-Stokes flow problem in the two subdomains $\Omega_i(t)$ is decoupled from the interface dynamics. Secondly, we use a linearization of the Navier-Stokes equation in which  $(\bu\cdot\nabla)\bu$ is replaced by $ (\bw\cdot\nabla)\bu$.
Thus we obtain a time dependent (generalized) Stokes  problem (also called Oseen problem) in  each of the subdomains, with coupling conditions as in \eqref{coupl1}-\eqref{coupl2}. We introduce the usual notation for the material derivative along the flow field $\bw$: 
\[
\dot v := \frac{\partial v}{\partial t}+ \bw \cdot \nabla v.
\]
We also introduce the piecewise constant functions $\rho,\mu$ with $\rho(x,t):=\rho_i$, $\mu(x,t):=\mu_i$ in $\Omega_i(t)$ and the deformation tensor $D(\bu):=\nabla \bu + (\nabla \bu)^T$.
Thus we obtain the following much simpler linear problem: determine $\bu$ and $p$ such that
\begin{align}
 \left\{
  \begin{aligned} 
    \rho \dot \bu -  \Div(\mu D(\bu)) +\nabla p & =  \bg_i  \\
    \Div\bu &= 0 
  \end{aligned}
  \right. & \qquad\text{in $\Omega_i(t) $,\quad $i=1,2$},
   \label{NSA} \\[2ex]
  [(-p\bI + \mu D(\bu))\bn_\Gamma] = -\tau\kappa\bn_\Gamma  \quad \text{on}~~\Gamma(t), \label{coupl1A} \\
 [\bu]= 0 \quad \text{on}~~\Gamma(t),
    \label{coupl2A} 
\end{align}
combined with suitable initial and boundary conditions for $\bu$. We restrict to homogeneous Dirichlet boundary and initial  conditions for $\bu$:
\[
  \bu(x,t)= 0\quad \text{for}~~(x,t)\in \big(\partial\Omega \times I\big) \cup \big(\Omega \times \{0\}\big).
\]
Both for the analysis and numerical simulations it is very convenient to reformulate this simplified model in a  \emph{one-fluid Stokes interface model} that combines the flow equations in the subdomains \eqref{NSA} and the interface conditions \eqref{coupl1A}-\eqref{coupl2A}. We consider this Stokes interface problem to be an interesting and relevant subproblem for the numerical simulation of the full two-phase flow problem \eqref{eq:NSseparat}-\eqref{eq:IMcond}. For example, a finite element method that is stable and accurate for this Stokes interface problem can be expected to be an efficient discretization for the Navier-Stokes flow equations (with small Reynold's numbers) in the full two-phase flow problem. 
  The main contribution of this paper  is the derivation of a well-posed space-time variational formulation of this Stokes interface model and, based on this, a (Galerkin) space-time finite element discretization. 

We mention a few relevant properties of the interface Stokes problem \eqref{NSA}-\eqref{coupl2A}. The discontinuity of the coefficients $\rho_i,\mu_i$ across the interface and the interface force induced by the surface tension in \eqref{coupl1A} lead, even if the data is otherwise smooth, to a discontinuity in the pressure $p$ and to a discontinuity in the derivative of the velocity $\bu$ on the space-time interface $\mathcal{S}$, cf. \cite{Pruss2016}. Hence, we have to deal with moving discontinuities. Typically the interface is not constant in time and  thus we do not have a tensor product structure. These properties make this interface Stokes problem significantly more difficult to solve numerically than a standard time-dependent Stokes equation. Even for this strongly simplified problem we are not aware of any rigorous (sharp) error bounds for finite element discretization methods.

As a first step towards such an error analysis we need a suitable well-posed variational formulation.  Concerning this we distinguish two different approaches. Firstly, the formulation and corresponding analysis is based on Lagrangian techniques, in which a suitable (coordinate) transformation is used to transform the given problem into one with a tensor product structure (i.e., a stationary interface).  Such an approach is used in e.g. \cite{Pruss2016} (various parabolic two-phase problems) or \cite{Saal} (free boundary Stokes problem).
Such Lagrangian formulations are useful in the context of ALE (arbitrary Lagrangian Eulerian) discretizations and fitted finite elements. For  a class of parabolic interface problems error bounds for fitted finite element methods have been derived in the literature, e.g. \cite{Zou1998}. Alternatively, one can consider a formulation and analysis in an Eulerian setting  (no coordinate transformations). Such formulations, which are standard for one-phase (Navier-)Stokes equations (\cite{ErnGuermond, Temam,Wloka}) are  better suited for unfitted finite element techniques. Finite element methods for fluid-stucture interaction have analysed in this setting in  \cite{Olshanskii2018,Takahashi2009}. If in the original two-phase flow problem an interface capturing method such as the very popular level set method is used, this very often leads to the application of unfitted finite element discretization methods for the flow problem (meaning that the triangulations are not fitted to the evolving interface).  This then requires special finite element spaces, for example an  XFEM \cite{Fries2010,Lehrenfeld2015}, unfitted FEM with a 
Nitsche penalty term \cite{Hansbo2002,Hansbo2009} or a CutFEM \cite{Burman2012,Burman2015,Hansbo2014}.  In this paper we restrict to the Eulerian approach.

Hence, for the time dependent Stokes interface problem described above we are interested in a well-posed variational formulation in an Euclidean setting, similar to those for one-phase (Navier-)Stokes equations known in the literature (\cite{ErnGuermond,Temam,Wloka}). For one-phase (Navier-)Stokes equations new well-posed space-time  formulations have been developed in the recent papers \cite{SchwabStevenson1,SchwabStevenson2}. These formulations do not cover the Stokes interface model described above, due to the lack of  a tensor product structure. It turns out that in particular the discontinuity in the mass density $\rho$ across the interface causes significant difficulties considering the analysis of well-posedness, as explained in Remark~\ref{Remdifficulty}. As a main contribution of this paper we develop an analysis resulting in a well-posed space-time variational formulation. Main results on  well-posedness are given in Corollary~\ref{corolmain}, Theorem~\ref{mainthm1} and Theorem~\ref{thmequivalence}.
 
Our analysis is rather different from the analyses used in the derivation of a well-posed variational  one-phase Stokes problems \cite{ErnGuermond, Temam,Wloka}. 

Based on this space-time variational formulation we propose an space-time unfitted finite element method. 
The method combines standard Discontinuous Galerkin time discretization \cite{Matthies,Thomee,Steinbach2018} with an XFEM or CutFEM approach  \cite{Fries2010,Lehrenfeld2015,Burman2015,Hansbo2014} to account for the jump in pressure across the space-time interface $\mathcal{S}.$   We present results of numerical experiments with this method. An error analysis of this method is a topic of current research and not considered in this paper.

The remainder of the paper is organized as follows. In Section~\ref{sectweak} we introduce a  variational formulation of the Stokes interface problem \eqref{NSA}-\eqref{coupl2A} in an obvious space-time Sobolev space of divergence free functions. In Remark~\ref{Remdifficulty} we explain why the analysis of well-posedness of this formulation is problematic. This motivates the introduction of other (related) spaces, for which  well-posedness of a variational formulation  can be proved. This analysis is presented in Section~\ref{sectmodvari}.  In Section~\ref{sectbroken} a standard discontinuous Galerkin approach is applied to derive a well-posed space-time variational formulation that allows a time stepping procedure. In Section~\ref{sectpressure} we study a space-time variational problem involving the pressure variable to satisfy the divergence free constraint. Based on this variational formulation we introduce an unfitted space-time finite element method in Section~\ref{sectUnfittedFEM} and 
give results of numerical experiments with this method. We finally give a summary and outlook in Section~\ref{sectoutlook}. 
 
\section{Space-time variational formulation} \label{sectweak}
We start with an assumption concerning the required smoothness of the space-time interface $\mathcal{S}:=\bigcup_{t\in I}\Gamma(t)\times \{t\}$ and the given velocity field $\bw$. 
\begin{assumption} 
 Throughout the paper we assume that $\mathcal{S}$ is a connected Lipschitz hypersurface in $\Bbb{R}^{d+1}$ and that the given velocity field $\bw$ is divergence free and $\bw \in C(\bar I;L^2(\Omega)^d)$.  The latter guarantees that the material derivative $\dot v = \frac{\partial v}{\partial t} +\bw \cdot \nabla v$ is well-defined  in a  weak sense as in \cite{DiPernaLions}. The piecewise constant density $\rho$ and the velocity field $\bw$ are assumed to satisfy the compatibility condition $\dot \rho=0$. 
 
  Furthermore, we make the assumption $\bw \in L^\infty(Q)^d$. This condition can be replaced by another (more natural) one which depends on the dimension $d$, cf. Remark~\ref{RemSobInbed}.
\end{assumption}
\ \\[1ex]
As  is usually done in the analysis of (Navier-)Stokes equations, we restrict to suitable  subspaces of divergence free velocity fields and thus eliminate the pressure. We derive well-posedness of a suitable variational formulation in these subspaces. Therefore we introduce the spaces
\begin{equation} \label{defX}
  \cV :=\{\, v \in H_0^1(\Omega)^d~|~\Div v=0\,\}, \quad X:= L^2(I;\cV).
\end{equation}
Assume that the strong formulation \eqref{NSA}-\eqref{coupl2A} has a sufficiently smooth solution $\bu$. Multiplication by test function $\bv \in X$ and partial integration then implies:
\begin{equation} \label{rt}
 (\rho \dot \bu, \bv)_{L^2} + (\mu D(\bu),D(\bv))_{L^2} = (\rho \bg, \bv)_{L^2}- \tau  \int_0^T \int_{\Gamma(t)}   \kappa \bn_{\Gamma} \cdot \bv \, ds \, dt , 
\end{equation}
where $(\cdot,\cdot)_{L^2}$ denotes the (vector) $L^2$ scalar product over the space-time cylinder $Q$. 
\begin{remark}\label{rmSurfaceForce} \rm  
Note that the second term in the right hand-side of \eqref{rt} corresponds to a force that acts only on the space-time interface $\mathcal{S}$. This induces a discontinuity in the pressure Lagrange multiplier. 
Below, instead of the specific right hand-side in \eqref{rt} we consider a generic $F \in X'$. If $\bg \in L^2(Q)^d$ and $\kappa\bn_\Gamma \in L^2(\mathcal{S})^d$ then the right hand-side satisfies $F \in X'$. In order to have the normal $\bn_\Gamma$ and the curvature $\kappa$ in the classical (strong) sense, we need additional ($C^2$) smoothness of $\Gamma(t)$. The regularity of $\Gamma(t)$ depends on the regularity of $\Gamma(0)$ and $\bw$ in the following way.
The advection field $\bw$ defines a Lagrangian flow $\Phi$ (see \cite{Crippa2009}): for a given $y \in \Omega$ the function $t \mapsto \Phi(y,t)$ is defined by the ODE system
\begin{equation}\label{flowPhi}
\begin{cases}
\ddt{  \Phi}(y,t) = \bw(\Phi(y,t),t), \quad t \in I,
\\
\Phi(y,0)=y.
\end{cases}
\end{equation}
For $\bw\in C^1(\bar I;C^2(\bar \Omega))^d$ this Lagrangian flow is uniquely defined and $\Phi \in C^2(\bar Q)^d$. This is known from classical Cauchy-Lipschitz theory, see \cite[\S 1.3]{Crippa2009}. Since $\Gamma(t) = \Phi(\Gamma(0),t)$, we can conclude that $\Gamma(t)$ is $C^2$ if $\bw\in C^1(\bar I;C^2(\bar \Omega)^d)$ and $\Gamma(0)$ is $C^2$. 

Weaker notions of curvature have been developed for cases with less smoothness. This issue, however, is not relevant for the well-posedness results in the remainder of the paper. 
\end{remark}
\ \\[1ex]

A suitable weak material derivative can be defined in the standard distributional sense. For this we first introduce further notation. 
Elements $\bu \in X$ have values $\bu(x,t):=\bu(t)(x) \in \Bbb{R}^d$, $(x,t)\in Q$. Due to the zero boundary values on $\partial \Omega$, the norm $\|v\|_1=(\|v\|_{L^2(\Omega)}^2 +\|\nabla v\|_{L^2(\Omega)}^2)^\frac12 $  on $H_0^1(\Omega)$ is equivalent to $|v|_1:=\|\nabla v\|_{L^2(\Omega)}$. In the remainder we use the latter norm, with corresponding scalar product denoted by $( \cdot,\cdot )_{1,\Omega}$ on $\cV$. The scalar product on $X$ is denoted by
\[
  ( \bu,\bv)_X:= \int_0^T \big( \bu(t),\bv(t)\big)_{1,\Omega} \, dt = \sum_{i=1}^d \int_0^T \int_\Omega \nabla u_i(t) \cdot \nabla v_i(t) \, dx \, dt  = ( \nabla \bu, \nabla \bv )_{L^2}.   
\]
Recall that $C_0^1(\Omega)^d \cap \cV$ is dense in $\cV$ and using the tensor product structure of  $X$ we get that
\begin{align} \label{defD0}
 \cD_{0}:= \{\, \sum_{i=1}^n g_i \bphi_i~|~ n \in \Bbb{N}, \, g_i \in C_0^\infty(I),\, \bphi_i \in C_0^1(\Omega)^d \cap \cV\, \} \subset C_0^1(Q)^d
\end{align}
is dense in $X$, i.e., $\overline{\cD_{0}}^{\|\cdot\|_X}=X$. 
For the case of an evolving  interface and with the material derivative in \eqref{NSA} it is natural to introduce the following weak \emph{material derivative} for functions from $X$.  
For $\bv \in X$ we define the functional $\rho \dot \bv$ by
\begin{equation} \label{r1dot}
   \la \rho \dot \bv, \bphi \ra  = - ( \rho \bv, \dot \bphi)_{L^2} \quad \text{for}~~\bphi \in \cD_0.
\end{equation}
Note that in the $L^2$ scalar product we use a weighting with the strictly positive piecewise constant function $\rho$.
We introduce the following analogon of the space $ \{\bv\in X~|~\ddt{\bv}\in X'\}$:
\[
  W=\{\, \bv \in X~|~ \rho \dot \bv \in X' \, \}, \quad \|\bv\|_W^2= \|\bv\|_X^2 +\|\rho\dot \bv\|_{X'}^2.
\]
An important difference between $ \{\bv\in X~|~\ddt{\bv}\in X'\}$ and $W$ is that, if $\rho$ varies with $t$ (i.e., $\rho_1 \neq \rho_2$ and the interface is not stationary),  the latter \emph{does not have a tensor product structure}.
\begin{remark} \rm
 Inserting the definition of the material derivative we get
\[
  (\rho \bv, \dot \bphi)_{L^2}= (\rho \bv, \frac{\partial \bphi}{\partial t})_{L^2} + (\rho \bv,\bw \cdot \nabla \bphi)_{L^2}.
\]
There is a constant $c$, which depends on $\|\bw\|_{L^{\infty}(Q)}$, such that $|(\rho \bv,\bw \cdot \nabla \bphi)_{L^2}| \leq c \|\bv\|_X \|\bphi\|_X $ for all $\bv \in X$, $\bphi \in \cD_0$. This implies  that  $\rho  \dot \bv \in X'$ iff  $\frac{\partial (\rho \bv)}{\partial t} \in X' $, and $\|\rho \dot \bv - \frac{\partial (\rho \bv)}{\partial t}\|_{X'}\leq c\|\bv\|_X$. Therefore the norms $\|\bv\|_{X}+\|\frac{\partial (\rho \bv)}{\partial t} \|_{X'}$ and $\|\bv\|_{W}$ are equivalent. 
\end{remark}
\ \\[1ex]
For smooth functions $\bv,\bphi \in C^1(\bar Q)^d \cap X$ we obtain, using \cite[Theorem II.6]{DiPernaLions} (applied to $\rho$ and $\bv\cdot \bphi$) and $\Div \bw = 0$, the following partial integration identity:
\begin{align}
 & \int_0^T \int_\Omega \rho \dot \bv\cdot \bphi + \rho \bv\cdot \dot \bphi = \int_0^T \int_\Omega \rho \dot{\overbrace{\bv\cdot \bphi}} \nonumber
\\ &=  \big(\rho(\cdot,T) \bv(\cdot,T), \bphi(\cdot,T)\big)_{L^2(\Omega)}
 -\big(\rho(\cdot,0) \bv(\cdot,0), \bphi(\cdot,0)\big)_{L^2(\Omega)} .\label{fundid}
\end{align}
%
For $\bphi \in \cD_0$ the boundary terms vanish, and thus we get
\begin{equation} \label{id2} \la \rho \dot \bv, \bphi \ra  = (\rho \dot \bv,  \bphi)_{L^2}  \quad \text{for}~~ \bv \in C^1(\bar Q)^d \cap X,\bphi \in \cD_0,
\end{equation}
which means that the weak material derivative  $\rho \dot \bv$ can be identified with the function $\rho \dot \bv$. By a continuity argument it follows that the result in \eqref{id2} also holds for all $\bphi \in X$.  

A natural weak formulation of \eqref{NSA}-\eqref{coupl2A} is as follows, cf. \eqref{rt}. Given $F \in X'$,  determine $\bu \in W$ with $\bu(0)=0$ and 
\begin{equation} \label{weak1}
\la \rho \dot \bu, \bv \ra + (\mu D(\bu),D(\bv))_{L^2} = F(\bv) \quad \text{for all}~~ \bv \in X. 
\end{equation}
\begin{remark} \label{Remdifficulty} \rm 
As noted above, the spaces $X$ and $W$ are very natural ones. We are, however,  not able to prove well-posedness of this formulation.  The key difficulty is to show that smooth functions are dense in $W$. For the case that the mass density $\rho$ is constant or  $\mathcal{S}$ does not depend on $t$ (stationary interface), density of smooth functions can be proved using mollification procedures in Bochner spaces as in e.g., \cite[Chapter 25]{Wloka}. For the general case, however, we do not have a tensor product structure and these techniques fail. We tried to develop a mollification technique in the full space-time cylinder $\Bbb{R}^{d+1}$. Such a mollification needs to satisfy a commutation property between mollification and distributional differentation \eqref{r1dot} (which involves the discontinuous function $\rho$) and furthermore must respect the divergence free property and the homogeneous Dirichlet boundary condition. We were not able to develop such a mollification technique.   If we would have a 
density of smooth functions property of $W$,  it can be shown  that there is a bounded  trace operator  $W \to L^2(\Omega)^d$, $\bu \to \bu(\cdot, t)$, which ensures that $\bu(0)$ is well-defined,  and partial integration rules can be derived. Well-posedness of \eqref{weak1} can then be derived using fairly standard arguments as in e.g. \cite[Chapter 26]{Wloka}. The density of smooth functions property, however, is an  open problem. 
\end{remark}
\ \\

\begin{remark} \rm 
In \eqref{weak1} we consider  a variational formulation in which a weak material derivative $\dot \bu$ is scaled with $\rho$, as in \eqref{NSA}. The scaling with $\rho$ (which does not have tensor product structure) causes significant difficultities in the theoretical analysis (Remark~\ref{Remdifficulty}). One might consider a rescaling of the momentum equation in \eqref{NSA} that eliminates the $\rho$ term in front of the material derivative $\dot \bu$. The two obvious possibilities are to introduce $\tilde p:= \rho^{-1} p$ or $\tilde \bu:= \rho \bu$. In both cases we rescale $\mu$, using $\tilde \mu:=\rho^{-1}\mu$. If we use $\tilde p$, then partial integration of the momentum equation (multiplied by a test function $\bv$) over the domain $\Omega=\Omega_1(t)\cup \Omega_2(t)$ results is an interface term of the form $\int_{\Gamma(t)}   [(-\tilde p\bI + \tilde \mu D(\bu))\bn_\Gamma] v\, ds$. This term can not be treated as a natural interface term, because in the interface condition \eqref{coupl1A} we have 
the quantities $p, \, \
\mu$ instead of $\tilde p,\, \tilde \mu$. If we use $\tilde \bu$, then  the term $\int_{\Gamma(t)}   [(- p\bI + \tilde \mu D(\tilde \bu))\bn_\Gamma] v\, ds$ occurs, which can be handled as a natural interface condition, due to $\tilde \mu D(\tilde \bu)= \mu D(\bu)$. However, from \eqref{coupl2A} we now obtain the interface condtion $[\rho^{-1} \tilde \bu]=0$, which implies that one has to use the space $H_0^1(\Omega_1(t)\cup \Omega_2(t))^d$ for $\tilde \bu$ instead of the (much nicer) space $H_0^1(\Omega)^d$ for $\bu$. Using these rescalings we are not able to derive a simpler analysis for well-posedness and therefore we keep the original formulation  \eqref{NSA}-\eqref{coupl2A}, which is closer to physics. 
\end{remark}

\section{Space-time variational formulation in modified spaces}  \label{sectmodvari}
 As explained in Remark~\ref{Remdifficulty}, we encounter difficulties in the analysis of well-posedness of the variational formulation \eqref{weak1} using the space $W$. In this section we study a variational formulation as in \eqref{weak1}, \emph{but with} $W$ \emph{replaced by a (possibly) smaller space} $V$ (introduced below).
The structure of the analysis is as follows. In Section~\ref{SectspacesUV} we introduce further spaces $U$ and $V$ and derive properties of these spaces. In Section~\ref{SectweakU} we study an intermediate weak formulation and derive a well-posedness result.  In Section~\ref{SectweakV} we introduce  and analyze the final desired weak formulation analogon of \eqref{weak1}, with $W$ replaced by $V$. The main results are given in Corollary~\ref{corolmain} and Theorem~\ref{mainthm1}.
\subsection{Spaces $U \subset V \subset W$} \label{SectspacesUV}
For $\bv \in X= L^2(I;\cV)$ its weak derivative $\ddt{\bv} \in \cD'(I;\cV')$ is defined in the usual distributional sense \cite{Wloka}. We define the spaces 
\begin{align}
  U&:= \{\, \bv \in X~|~ \ddt{\bv} \in L^2(I; L^2(\Omega)^d)\,\}, ~~\text{with norm}~~\|\bv\|_U^2 = \|\bv\|_X^2+\| \ddt{\bv}\|_{L^2}^2, \label{defU}\\
  V &:= \overline{U}^{\|\cdot\|_W},\quad \text{with norm}~~\|\bv\|_V=\|\bv\|_W.\label{defV}
\end{align}
These are Hilbert spaces with continuous embeddings 
\[ U \rightarrow V \rightarrow W.
\]
The norm $\|\cdot\|_U$ is equivalent to $\|\cdot\|_{H^1(Q)^d}$.
 The space $U$ has a tensor product structure and we can use standard arguments  to show that smooth functions are dense in $U$.  More precisely, let $\cD(\cV)$ be the space of all functions $\bv: \Bbb{R} \to \cV$ which are infinitely differentiable and have a compact support. Then, cf. Lemma 25.1 in \cite{Wloka},  $\cD(\cV)_{|I}$ is dense in $U$. Using the density of $C_0^1(\Omega)^d \cap \cV$ in $\cV$ we obtain that the space of smooth functions $\cD(C_0^1(\Omega)^d \cap \cV)_{|I}$ is dense in $U$. From  the density of $U$ in $V$ we thus get the density of smooth functions in $V$:
\begin{equation} \label{density}
  \overline{\cD(C_0^1(\Omega)^d \cap \cV)_{|I}}^{\|\cdot\|_W}= V.
\end{equation}
\begin{remark} \rm
It seems reasonable (based on analogous results for the tensor product case) to claim that $\overline{\cD(C_0^1(\Omega)^d \cap \cV)_{|I}}^{\|\cdot\|_W}=W$ holds, i.e., $V=W$. We are, however, not able to prove this claim, cf. Remark~\ref{Remdifficulty}.  Note that the well-posedness result derived  for $V$ in Corollary~\ref{corolmain} below implies that either $V \neq W$ or the well-posedness result holds for $W$.
\end{remark}
\ \\

Using the density result \eqref{density}
important properties of $V$ are derived in the following lemma.
\begin{lemma} \label{lem4}
\begin{description}
\item(i)~For a.e. $t \in [0,T]$ the trace operator $\bu \to \bu(\cdot,t)= \bu(t)$ can be extended to a bounded linear operator from $V$ into $L^2(\Omega)^d$.  Moreover, the inequality
\begin{equation} \label{rrt}
 \sup_{0 \leq t\leq T} \|\bu(t)\|_{L^2(\Omega)} \leq c \|\bu\|_V \quad \text{for all}~~ \bu \in V,
\end{equation}
holds with a constant $c$ independent of $\bu$.
\item(ii) For all $\bu,\bv \in V$, the following integration by parts  identity holds:
\begin{equation} \label{partint}
   \la \rho \dot \bu,\bv\ra +\la \rho \dot \bv, \bu\ra   =  \big(\rho(T) \bv(T), \bu(T)\big)_{L^2(\Omega)} -\big(\rho(0) \bv(0), \bu(0)\big)_{L^2(\Omega)}.
\end{equation}
\end{description}
\end{lemma}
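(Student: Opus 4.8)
The plan is to exploit the density result \eqref{density}: smooth functions $\cD(C_0^1(\Omega)^d \cap \cV)_{|I}$ are dense in $V$ with respect to $\|\cdot\|_W$. The strategy is to first establish both (i) and (ii) for smooth functions by explicit computation using the fundamental identity \eqref{fundid}, and then pass to the limit using the density and appropriate a priori estimates.

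\medskip

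For (ii): if $\bv,\bphi \in \cD(C_0^1(\Omega)^d \cap \cV)_{|I}$, identity \eqref{fundid} applied to these smooth functions gives exactly
\[
\la \rho \dot \bv, \bphi\ra + \la \rho\dot\bphi, \bv\ra = \big(\rho(T)\bv(T),\bphi(T)\big)_{L^2(\Omega)} - \big(\rho(0)\bv(0),\bphi(0)\big)_{L^2(\Omega)},
\]
using \eqref{id2} to identify the weak material derivative with the classical one on smooth functions. So (ii) holds on the dense subset. The right-hand side only involves the time traces at $t=0$ and $t=T$, so to pass to the limit I need part (i) first — in particular a uniform bound on the traces — which is why (i) must be proved before (ii), and ideally in a way that does not itself circularly invoke (ii).

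\medskip

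For (i): this is the main point. First I would derive, for smooth $\bv$, an estimate of $\|\bv(t)\|_{L^2(\Omega)}$ in terms of $\|\bv\|_W$. The natural route is to write, for $0 \le s \le t \le T$, using \eqref{fundid} with $\bphi = \bv$ (so $\rho\,\dot{\overline{\bv\cdot\bv}} = \rho\,\dot{\overline{|\bv|^2}}$) but localized to the interval $[s,t]$ — or more cleanly, to note that $g(t):=\big(\rho(t)\bv(t),\bv(t)\big)_{L^2(\Omega)}$ satisfies $g'(t) = 2\la \rho\dot\bv, \bv\ra_t$ in the appropriate sense (here one uses $\dot\rho=0$ crucially, so that the weight can be differentiated through). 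Integrating and using the uniform positivity and boundedness of $\rho$, $\rho_{\min} \le \rho \le \rho_{\max}$, together with $|\la \rho\dot\bv,\bv\ra| \le \|\rho\dot\bv\|_{X'}\|\bv\|_X$ and the fact that $\int_0^T g(s)\,ds \le \rho_{\max}\|\bv\|_{L^2}^2 \le C\|\bv\|_X^2$, yields
\[
\rho_{\min}\sup_{0\le t\le T}\|\bv(t)\|_{L^2(\Omega)}^2 \le \inf_{0\le s\le T} g(s) + 2\|\rho\dot\bv\|_{X'}\|\bv\|_X \le C\|\bv\|_W^2.
\]
This gives \eqref{rrt} on the dense subset. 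Then for general $\bu \in V$, take a $W$-convergent sequence $\bu_n \to \bu$ of smooth functions; the estimate shows $(\bu_n(t))_n$ is Cauchy in $L^2(\Omega)^d$ uniformly in $t$, hence converges to a limit which defines $\bu(t)$, with the bound \eqref{rrt} preserved; one checks this limit is independent of the approximating sequence and agrees with the pointwise-a.e. trace. Once (i) is in hand, (ii) follows for arbitrary $\bu,\bv \in V$ by approximating both by smooth sequences: the bilinear left-hand side passes to the limit because $\rho\dot{(\cdot)}: V \to X'$ and the trace $(\cdot)\mapsto(\cdot)(t)$ are both continuous (the latter by (i)), so the right-hand side at $t\in\{0,T\}$ converges as well.

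\medskip

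The main obstacle I anticipate is the rigorous justification that $g(t) = \big(\rho(t)\bv(t),\bv(t)\big)_{L^2(\Omega)}$ is absolutely continuous with $g' = 2\la\rho\dot\bv,\bv\ra$ for smooth $\bv$, i.e. the "chain rule" for the $\rho$-weighted material derivative, since $\rho$ is only piecewise constant and moves with the flow. Here the hypotheses $\dot\rho = 0$ and $\Div\bw=0$ are exactly what make this work, via the DiPerna–Lions transport calculus \cite{DiPernaLions} already invoked for \eqref{fundid}: applying \cite[Theorem II.6]{DiPernaLions} to $\rho$ and $|\bv|^2$ (or $\bv\cdot\bphi$) on subintervals, or directly quoting \eqref{fundid} as a black box on smooth functions, circumvents the difficulty. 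The remaining care is purely in the limiting arguments — ensuring the $L^\infty(0,T;L^2)$ Cauchy property and the independence of the trace from the chosen smooth approximation — which are routine given \eqref{density} and the estimate above.
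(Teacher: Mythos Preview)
Your proposal is correct and follows essentially the same architecture as the paper's proof: establish the trace estimate on the dense class $\cD(C_0^1(\Omega)^d\cap\cV)_{|I}$ via the energy identity \eqref{fundid}, then extend to $V$ by density, and finally obtain \eqref{partint} by approximation once \eqref{rrt} is in hand. The one notable technical difference is in how the $L^\infty(0,T;L^2(\Omega))$ bound is extracted for smooth $\bv$: the paper fixes $t$, introduces a smooth cutoff $\sigma$ with $\sigma(t)=1$ and $\sigma(t_e)=0$ on a subinterval $\tilde I=(t,t_e)$, applies \eqref{fundid} to $\sigma\bv$ on $\tilde Q$, and then bounds $(\rho\dot\bv,\sigma^2\bv)_{L^2(\tilde Q)}$ by $\|\rho\dot\bv\|_{X'}\|\bv\|_X$ using zero-extension of test functions from $\tilde X$ to $X$; you instead use the mean-value argument $g(t)\le \inf_s g(s)+\sup_{s,t}|g(t)-g(s)|$ together with $\inf_s g(s)\le T^{-1}\int_0^T g\le C\|\bv\|_X^2$ and $|g(t)-g(s)|=2|(\rho\dot\bv,\chi_{[s,t]}\bv)_{L^2(Q)}|\le 2\|\rho\dot\bv\|_{X'}\|\bv\|_X$. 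Both devices are standard and both rely on the same key observation (zero-extension of $\chi_{[s,t]}\bv$ or $\sigma^2\bv$ lies in $X$ so the full $X'$-norm of $\rho\dot\bv$ controls the localized pairing); your version is slightly more streamlined since it avoids introducing the auxiliary cutoff.
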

\begin{proof}  Take $t \in [0,\frac12 T]$ (the case $t\in [\frac12,T]$ can be treated with very similar arguments). Define $t_e:=t+\frac14 T$, $\tilde I:=(t,t_e)$, $\tilde Q:= \Omega \times  \tilde I \subset Q$. 
It suffices to prove the result in \eqref{rrt} for the dense subspace $\cD:=\cD(C_0^1(\Omega)^d \cap \cV)_{|I}$ of smooth functions. Take $\bu \in \cD$. The partial integration identity \eqref{fundid} on $\tilde Q$ yields
\begin{equation} \label{ppp}
 \|\rho(t_e)^\frac12 \bu(t_e)\|_{L^2(\Omega)}^2 -\|\rho(t)^\frac12 \bu(t)\|_{L^2(\Omega)}^2 = 2 (\rho \dot \bu,\bu)_{L^2(\tilde Q)}. 
\end{equation}
Let $\sigma$ be a smooth decreasing scalar function with compact support and $\sigma(t)=1$, $\sigma(t_e)=0$. Note that $\sigma \bu \in \cD$ holds. If in \eqref{ppp} we use $\sigma \bu$, instead of $\bu$, we get, with $\rho_{\min} :=\min\{\rho_1,\rho_2\}$:
\begin{align*}
 \|\bu(t)\|_{L^2(\Omega)}^2 & \leq \rho_{\min}^{-1}\|\rho(t)^\frac12 \bu(t)\|_{L^2(\Omega)}^2 = 2 \rho_{\min}^{-1}|(\rho \dot{(\sigma \bu)},\sigma \bu)_{L^2(\tilde Q)}| \\
 & \leq 2 \rho_{\min}^{-1} \big( |(\rho \sigma' \sigma \bu,\bu)_{L^2(\tilde Q)}| +|(\rho \dot \bu, \sigma^2 \bu)_{L^2(\tilde Q)}| \big).
\end{align*}
Note that $|(\rho \sigma' \sigma \bu,\bu)_{L^2(\tilde Q)}| \leq c \|\bu\|_{L^2(\tilde Q)}^2 \leq c \|\bu\|_X^2 \leq c \|\bu\|_V^2 $ holds. Furthermore, with $\tilde X:= L^2(\tilde I;\cV)$, and extending $\bv \in  \tilde X$ by zero outside $\tilde I$, we have:
\begin{align*} |(\rho \dot \bu, \sigma^2 \bu)_{L^2(\tilde Q)}| & \leq \sup_{\bv \in \tilde X}\frac{(\rho \dot \bu, \bv)_{L^2(\tilde Q)}}{\|\bv\|_{\tilde X}} \|\sigma^2 \bu\|_{\tilde X} = \sup_{\bv \in \tilde X}\frac{(\rho \dot \bu, \bv)_{L^2( Q)}}{\|\bv\|_{X}} \|\sigma^2 \bu\|_{\tilde X} \\ &  \leq  c  \sup_{\bv \in X}\frac{(\rho \dot \bu, \bv)_{L^2( Q)}}{\|\bv\|_X} \| \bu\|_X  \leq c \|\rho \dot \bu \|_{X'}\| \bu\|_X  \leq c \|\bu\|_{V}^2.
\end{align*}
 Thus we get
\[
 \|\bu(t)\|_{L^2(\Omega)} \leq c \|\bu\|_{V},
\]
with a constant (depending on $T$) that is independent of $\bu \in \cD$. Due to density of $\cD$ this proves the result in \eqref{rrt}, and thus (i).\\
We consider (ii). Due to density and the continuity result in \eqref{rrt} it suffices to prove \eqref{partint} for $\bu,\bv \in \cD$. The identity in \eqref{id2}  holds for $\bphi \in X$ and thus for   $\bu,\bv \in \cD$ it follows from \eqref{id2} that 
\[ \la\rho \dot \bu,\bv\ra +\la \rho \dot \bv,\bu\ra =(\rho \dot \bu,\bv)_{L^2} + (\rho \dot \bv, \bu)_{L^2}.
\]
From this and the partial integration identity \eqref{fundid}  the result \eqref{partint} follows. 
\end{proof}

\subsection{Well-posed  space-time variational formulation in $U$} \label{SectweakU}
We define $U_0:=\{\, \bu \in U~|~\bu(0)=0\,\}$, where $\bu(0)$ is well-defined (in $L^2(\Omega)$ sense) due to \eqref{rrt}. In the following theorem we treat a variational problem with a sufficiently smooth right hand-side $\bbf$ and a bilinear form $a(\cdot,\cdot)$ on $\cV \times \cV$ that is \emph{in}dependent of $t$. These assumptions are such that we can apply a standard Galerkin procedure to show existence of a unique solution in $U_0$. This intermediate problem will be used in the next section to derive well-posedness of a weak formution as in \eqref{weak1}, with $W$ replaced by the space $V \subset W$.
\begin{theorem} \label{thm1}
 Take $\bbf \in C(\overline{I};L^2(\Omega)^d)$ and let $a(\cdot,\cdot)$ be a continuous elliptic bilinear form on $\cV \times \cV$ (with norm $|\cdot|_1$) that does not depend on $t$. Then there exists a unique $\bu \in U_0$ such that
\begin{equation} \label{Res1}
 (\rho \dot \bu, \bv)_{L^2} + \int_0^T a(\bu(t),\bv(t)) \, dt = \int_0^T (\bbf(t),\bv(t))_{L^2(\Omega)} \, dt \quad \text{for all}~~\bv \in X.
\end{equation}
Furthermore
\begin{equation} \label{Res2}
  \|\bu\|_{U} \leq c \|\bbf\|_{L^2}
\end{equation}
holds, with a constant $c$ independent of $\bbf$.
\end{theorem}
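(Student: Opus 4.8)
The plan is a Faedo--Galerkin construction; the only genuinely non-routine point is that the weight $\rho$ destroys the tensor-product structure, so the ``mass matrix'' becomes time dependent, and the compatibility condition $\dot\rho=0$ (together with $\bw\in L^\infty(Q)$) is what keeps everything under control. First I would fix a sequence $(\bphi_k)_{k\ge 1}\subset C_0^1(\Omega)^d\cap\cV$ whose linear span is dense in $\cV$ (such a sequence exists since this space is separable and dense in $\cV$), and set $V_n=\operatorname{span}\{\bphi_1,\dots,\bphi_n\}$. Choosing the basis inside $C_0^1$, rather than, say, Stokes eigenfunctions, is deliberate: it guarantees that the Galerkin solutions $\bu_n$ belong to $C^1(\bar Q)^d$, which is exactly the regularity needed to invoke the partial integration identity \eqref{fundid}. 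For each $n$ I seek $\bu_n(t)=\sum_{k=1}^n c_k(t)\bphi_k$ with $c(0)=0$ and, for $j=1,\dots,n$,
\[ (\rho(t)\dot\bu_n(t),\bphi_j)_{L^2(\Omega)}+a(\bu_n(t),\bphi_j)=(\bbf(t),\bphi_j)_{L^2(\Omega)},\qquad t\in\bar I . \]

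Using $\dot\bu_n=\partial_t\bu_n+\bw\cdot\nabla\bu_n$ and $\partial_t\bu_n(t)=\sum_k\dot c_k(t)\bphi_k$, this is the linear ODE system $M(t)\dot c(t)=\bbf_n(t)-\big(B(t)+A\big)c(t)$, $c(0)=0$, with $M(t)_{jk}=(\rho(t)\bphi_k,\bphi_j)_{L^2(\Omega)}$, $B(t)_{jk}=(\rho(t)\,\bw(t)\cdot\nabla\bphi_k,\bphi_j)_{L^2(\Omega)}$, $A_{jk}=a(\bphi_k,\bphi_j)$ and $\bbf_n(t)_j=(\bbf(t),\bphi_j)_{L^2(\Omega)}$. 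Since $\rho\ge\rho_{\min}>0$, the matrix $M(t)$ is symmetric positive definite uniformly in $t$ (indeed $\xi^\top M(t)\xi=\int_\Omega\rho\,|\sum_k\xi_k\bphi_k|^2\ge\rho_{\min}\,\xi^\top G\xi$, $G$ the $L^2(\Omega)$-Gram matrix of the $\bphi_k$), and $t\mapsto M(t),B(t),\bbf_n(t)$ are continuous on $\bar I$: this uses $\bbf\in C(\bar I;L^2)$, $\bw\in C(\bar I;L^2(\Omega))\cap L^\infty(Q)$, boundedness of $\bphi_k$ and $\nabla\bphi_k$, and continuity of $t\mapsto\rho(\cdot,t)$ in $L^p(\Omega)$, which follows from $\dot\rho=0$ and the DiPerna--Lions transport theory. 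Hence $M(t)^{-1}$ exists and is continuous, and classical linear ODE theory produces a unique $c\in C^1(\bar I)^n$; thus $\bu_n\in C^1(\bar I;V_n)\subset C^1(\bar Q)^d\cap U_0$.

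The core of the argument is a pair of energy estimates, uniform in $n$. Multiplying the $j$-th equation by $c_j(t)$ and summing gives $(\rho\dot\bu_n,\bu_n)_{L^2(\Omega)}+a(\bu_n,\bu_n)=(\bbf,\bu_n)_{L^2(\Omega)}$; since $\bu_n\in C^1(\bar Q)^d$, the identity \eqref{fundid}, applied on the subcylinder $\Omega\times(0,t)$ with $\bv=\bphi=\bu_n$, yields $(\rho\dot\bu_n(t),\bu_n(t))_{L^2(\Omega)}=\tfrac12\frac{d}{dt}\|\rho(t)^{1/2}\bu_n(t)\|_{L^2(\Omega)}^2$, and then ellipticity of $a$, Young's inequality, Grönwall's lemma and $\bu_n(0)=0$ give $\sup_t\|\bu_n(t)\|_{L^2(\Omega)}+\|\bu_n\|_X\le c\|\bbf\|_{L^2}$. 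Multiplying instead the $j$-th equation by $\dot c_j(t)$ and summing --- legitimate because $\partial_t\bu_n(t)\in V_n$ --- gives $(\rho\dot\bu_n,\partial_t\bu_n)_{L^2(\Omega)}+a(\bu_n,\partial_t\bu_n)=(\bbf,\partial_t\bu_n)_{L^2(\Omega)}$; splitting $\dot\bu_n=\partial_t\bu_n+\bw\cdot\nabla\bu_n$, the first term is bounded below by $\rho_{\min}\|\partial_t\bu_n\|_{L^2(\Omega)}^2-\rho_{\max}\|\bw\|_{L^\infty(Q)}|\bu_n|_1\|\partial_t\bu_n\|_{L^2(\Omega)}$, while, using symmetry of $a$ (which holds in the application, where $a(\bu,\bv)=(\mu D(\bu),D(\bv))_{L^2(\Omega)}$), one has $\int_0^T a(\bu_n,\partial_t\bu_n)\,dt=\tfrac12 a(\bu_n(T),\bu_n(T))\ge 0$; absorbing the cross term and the right-hand side by Young's inequality and inserting the first estimate yields $\|\partial_t\bu_n\|_{L^2(Q)}\le c\|\bbf\|_{L^2}$. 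Together these give the uniform bound $\|\bu_n\|_U\le c\|\bbf\|_{L^2}$.

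Finally I would pass to the limit. Since $U$ is a Hilbert space there is a subsequence with $\bu_n\rightharpoonup\bu$ in $U$; then $\partial_t\bu_n\rightharpoonup\partial_t\bu$ and $\nabla\bu_n\rightharpoonup\nabla\bu$ in $L^2(Q)$, hence (using $\bw\in L^\infty(Q)$) $\dot\bu_n\rightharpoonup\dot\bu$ in $L^2(Q)$. Testing the $j$-th Galerkin identity against $g\in C_0^\infty(I)$, passing to the limit, and using density of $\cD_0$ in $X$ (see \eqref{defD0}) together with the $X$-boundedness of all three terms, yields \eqref{Res1} for every $\bv\in X$. The trace at $t=0$ is a bounded --- hence weakly continuous --- map $U\to L^2(\Omega)$ (cf.\ \eqref{rrt}, or directly from $U\hookrightarrow H^1(I;L^2(\Omega))\hookrightarrow C(\bar I;L^2(\Omega))$), so $\bu_n(0)=0$ forces $\bu(0)=0$, i.e.\ $\bu\in U_0$; weak lower semicontinuity of the norm then gives \eqref{Res2}. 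Uniqueness follows since the difference of two solutions lies in $U_0$ and solves the homogeneous problem: testing with itself and using the integration by parts identity \eqref{partint} of Lemma~\ref{lem4}(ii) (with $\bv=\bu$), which gives $\la\rho\dot\bu,\bu\ra=\tfrac12\|\rho(T)^{1/2}\bu(T)\|_{L^2(\Omega)}^2\ge 0$, together with $\int_0^T a(\bu,\bu)\,dt\ge\alpha\|\bu\|_X^2$, forces $\|\bu\|_X=0$. I expect the main obstacle to be the second energy estimate: one must control the transport term $(\rho\,\bw\cdot\nabla\bu_n,\partial_t\bu_n)_{L^2(\Omega)}$ against $\|\partial_t\bu_n\|_{L^2}$, and the term $\int_0^T a(\bu_n,\partial_t\bu_n)\,dt$ is handled cleanly only thanks to symmetry of $a$; a secondary point requiring care is the $t$-regularity and invertibility of the non-tensor-product mass matrix $M(t)$, needed for solvability of the Galerkin system.
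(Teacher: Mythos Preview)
Your proof is correct and follows essentially the same Faedo--Galerkin route as the paper: finite-dimensional ODE, two energy estimates (test with $\bu_n$, then with $\partial_t\bu_n$), weak compactness, and uniqueness via coercivity. The minor variations---choosing the basis in $C_0^1(\Omega)^d\cap\cV$ rather than just orthonormal in $\cV$, invoking Gr\"onwall in the first estimate where the paper integrates directly, and obtaining $\bu(0)=0$ via weak continuity of the trace $U\to L^2(\Omega)$ rather than the paper's integration-by-parts comparison argument---are all legitimate and do not change the structure.

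Two small remarks. First, your parenthetical justification of symmetry (``which holds in the application, where $a(\bu,\bv)=(\mu D(\bu),D(\bv))_{L^2(\Omega)}$'') is off target: that bilinear form depends on $t$ through $\mu$ and is precisely the one \emph{excluded} from this theorem; the time-independent $a$ used downstream (in Corollary~\ref{cor1} and Lemma~\ref{la:leminj}) is $\Gamma(\cdot,\cdot)_{1,\Omega}$. Second, you are right that the identity $\int_0^T a(\bu_n,\partial_t\bu_n)\,dt=\tfrac12\,a(\bu_n(T),\bu_n(T))$ requires symmetry of $a$; the paper's proof uses this identity as well, so both arguments implicitly add that hypothesis to the stated assumptions.
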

\begin{proof} The proof is based on a standard Galerkin technique known in the literature, e.g. \cite{Evans}. 
 Let $(\bv_k)_{k \geq 1}$ be a total orthonormal set in $\cV$ and define $\cV_m:={\rm span}\{\bv_1, \ldots, \bv_m\}$, $X_m:=L^2(I; \cV_m)$. We consider the following problem: determine $\bu_m \in X_m$ with $\bu_m(0)=0$ and such that:
 \begin{equation} \label{pp}
  (\rho \dot \bu_m,\bv)_{L^2} + \int_0^T a(\bu_m(t), \bv(t)) \, dt = \int_0^T (\bbf(t),\bv(t))_{L^2(\Omega)} \, dt \quad \text{for all}~~\bv \in X_m.
 \end{equation}
Using the representation $\bu_m(t)= \sum_{j=1}^m g_j(t) \bv_j$ and with $\bg_m(t):=(g_1(t),\ldots,g_m(t))^T$ this problem can be reformulated as a system of ODEs:
\begin{equation} \label{ODE}
 \begin{split}
   & M_m(t) \ddt{\bg_m(t)} + B_m(t)\bg_m(t)= F_m(t) \\
   & \bg_m(0)=0, 
 \end{split}
\end{equation}
with a symmetric positive definite matrix $M_m \in C(\bar{I};\Bbb{R}^{m\times m})$, $(M_m(t))_{i,j}=(\rho(t) \bv_j,\bv_i)_{L^2(\Omega)}$ and $B_m \in C(\bar{I};\Bbb{R}^{m\times m})$, $ (B_m(t))_{i,j}= (\bw(\cdot,t)\cdot \nabla \bv_j,\bv_i)_{L^2(\Omega)} + a(\bv_j,\bv_i)$ and $F_m\in C(\bar{I};\Bbb{R}^{m})$, $ (F_m(t))_{i}= (\bbf(t),\bv_i)_{L^2(\Omega)}$. Standard theory for ODEs implies that \eqref{ODE} has a unique solution $\bg_m \in C^1(\bar{I})^m$, and thus \eqref{pp} has a unique solution $\bu_m$. We take $\bv=\bu_m$ in \eqref{pp}:
\[
 (\rho \dot \bu_m, \bu_m)_{L^2} + \int_0^T a(\bu_m(t),\bu_m(t)) \, dt = (\bbf,\bu_m)_{L^2} .
\]
The ellipticity of $a(\cdot,\cdot)$ on $\cV$ implies that $\int_0^T a(\bu_m(t),\bu_m(t)) \, dt \geq \gamma \|\bu_m\|_X^2$ for a $\gamma >0$ independent of $\bu_m$. Combining this with partial integration, a Cauchy inequality and $\bu_m(0)=0$ yields
\[
 \|\rho^\frac12(T) \bu_m(T)\|_{L^2(\Omega)}^2 + \gamma \|\bu_m\|_X^2 \leq \|\bbf\|_{L^2} \|\bu_m\|_X,
\]
which implies a uniform bound $\|\bu_m\|_X \leq \gamma^{-1} \|\bbf\|_{L^2}$. 

We take $\bv= \ddt{ \bu_m}= \sum_{j=1}^m g_j'(t) \bv_j \in X_m$ in \eqref{pp}, and thus get:
\[
 (\rho \ddt{\bu_m},\ddt{\bu_m})_{L^2} + \int_0^T a(\bu_m(t),\ddt{\bu_m}(t) )\, dt = (\bbf, \ddt{\bu_m})_{L^2}  - (\rho \bw \cdot \nabla \bu_m,\ddt{d\bu_m})_{L^2}.  
\]
From $a(\bu_m(t),\bu_m(t))= \bg_m(t)^T A \bg_m(t)$, with $A_{i,j}=a(\bv_i,\bv_j)$ and $\bg_m(0)=0$ it follows that $ a(\bu_m(0),\bu_m(0))=0$. Using this we get
\begin{equation} \label{lp}
\int_0^T a(\bu_m(t),\ddt{\bu_m}(t) )\, dt = \frac12 \int_0^T \ddt{} a(\bu_m(t),\bu_m(t))\, dt = \frac12 a(\bu_m(T),\bu_m(T)) \geq 0.
\end{equation}
Using Cauchy inequalities and the uniform bound $\|\bu_m\|_X \leq \gamma^{-1} \|\bbf\|_{L^2}$ we obtain $\|\ddt{\bu_m}\|_{L^2} \leq c \|\bbf\|_{L^2}$ with a constant $c$ which only depends on $\rho, \|\bw \|_\infty$ and $\gamma$. Hence we have a uniform boundedness result 
\begin{equation} \label{np} \|\bu_m\|_U \leq c \|\bbf\|_{L^2}.
\end{equation}
Hence there is a subsequence, which we also denote by $(\bu_m)_{m \geq 0}$, that weakly converges $\bu_m \rightharpoonup \bu \in U$, which implies $\bu_m \rightharpoonup \bu$ in $X$ and $\ddt{\bu_m} \rightharpoonup \ddt{\bu}$ in $L^2(Q)$. Passing to the limit and using continuity arguments we conclude that $\bu \in U$ satisfies \eqref{Res1}. We now show that $\bu(0)=0$ holds, i.e., $\bu \in U_0$. Take an arbitrary $\bv \in C^1(\bar{I};\cV_N) \subset X_N$ with $\bv(T)=0$. From \eqref{Res1} and partial integration we obtain
\begin{equation} \label{o1}
 - (\rho \bu, \dot \bv)_{L^2} +   \int_0^T a(\bu(t),\bv(t)) \, dt = \int_0^T (\bbf(t),\bv(t))_{L^2(\Omega)} - (\rho(0)\bu(0),\bv(0))_{L^2(\Omega)}.
\end{equation}
We also get from \eqref{pp}, for $m \geq N$, and using $\bu_m(0)=0$:
\begin{equation} \label{o2}
 - (\rho \bu_m, \dot{\bv})_{L^2} +   \int_0^T a(\bu_m(t),\bv(t)) \, dt = \int_0^T (\bbf(t),\bv(t))_{L^2(\Omega)}\,dt.
\end{equation}
Comparing \eqref{o1}, \eqref{o2} and using $\bu_m \rightharpoonup \bu$ in $U$ it follows that $((\rho(0)\bu(0),\bv(0))_{L^2(\Omega)}=0$ holds. This implies $\bu(0)=0$ in $L^2(\Omega)$.
To show the uniqueness of $\bu$ we take $\bbf=0$ and $\bv=\bu$ in \eqref{Res1}: 
\[
 (\rho \dot \bu, \bu)_{L^2} + \int_0^T a(\bu(t), \bu(t))\, dt=0.
\]
Using $(\rho \dot \bu, \bu)_{L^2}= \frac12 \|\rho^\frac12(T) \bu(T)\|_{L^2(\Omega)}^2$ and the ellipticity of $a(\cdot,\cdot)$ it follows that $\|\bu\|_X=0$, hence we have uniqueness.
The bound in \eqref{Res2} follows from \eqref{np}.
\end{proof}
\ \\[1ex]
The assumption that the bilinear form  $a(\cdot,\cdot)$ is independent of $t$ is used for the derivation of a bound on $\|\ddt{\bu}\|_{L^2}$, for which the estimate in \eqref{lp} is a key ingredient. 
Using a standard Gronwall argument in \eqref{lp}, one can derive a similar result if the bilinear form $a(t;\cdot,\cdot)$ is time dependent and  differentiable with respect to time. Such arguments, however, fail when $a(t;\cdot,\cdot)$ is not differentiable,  which is the case we consider. The extension to a time-dependent bilinear form $a(t;\cdot,\cdot)$, with a possibly nonsmooth dependence on $t$, is treated in Theorem~\ref{mainprop1}.
\begin{remark}\label{RemSobInbed} \rm
In the proof above we used the assumption $\bw\in L^\infty(Q)^d$. This assumption can  be replaced by a different (more natural) assumption by using alternative estimates for the trilinear form $(\rho \bw\cdot \nabla \bu, \bv)$ which depend on the dimension $d$, see \cite[Section 2.3]{Temam}. The assumption $\bw \in L^\infty(Q)$ can be replaced by $\bw \in V$ for $d=2$. For $d=3$ we additionally need $\bw \in L^4(I;H^1(\Omega)^3)$.
\end{remark}

\begin{corollary}
Using that $C(\overline{I};L^2(\Omega)^d)$ is dense in $L^2(I;L^2(\Omega)^d)$ one can now derive the following well-posedness result: for each  $\bbf \in L^2(I;L^2(\Omega)^d)$ there exists a unique $\bu \in U_0$ such that \eqref{Res1} and \eqref{Res2} hold.
\end{corollary}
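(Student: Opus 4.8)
The plan is to obtain the result from Theorem~\ref{thm1} by a routine density and limiting argument, exploiting the linearity of the problem and the a priori bound \eqref{Res2}.

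First I would fix $\bbf \in L^2(I;L^2(\Omega)^d)$ and, using the stated density of $C(\overline I;L^2(\Omega)^d)$ in $L^2(I;L^2(\Omega)^d)$, choose a sequence $\bbf_n \in C(\overline I;L^2(\Omega)^d)$ with $\bbf_n \to \bbf$ in $L^2(I;L^2(\Omega)^d)$. By Theorem~\ref{thm1} there is for each $n$ a unique $\bu_n \in U_0$ solving \eqref{Res1} with right-hand side $\bbf_n$, together with $\|\bu_n\|_U \leq c\|\bbf_n\|_{L^2}$, $c$ independent of $n$. Since \eqref{Res1} is linear in the data, $\bu_n - \bu_m \in U_0$ solves the same problem with right-hand side $\bbf_n - \bbf_m$, so \eqref{Res2} gives $\|\bu_n - \bu_m\|_U \leq c\|\bbf_n - \bbf_m\|_{L^2} \to 0$. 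Hence $(\bu_n)$ is a Cauchy sequence in the Hilbert space $U$ and converges to some $\bu \in U$. Because $U$ embeds continuously into $V$, the trace bound \eqref{rrt} shows that $\bv \mapsto \bv(0)$ is bounded from $U$ into $L^2(\Omega)^d$; therefore $U_0 = \{\,\bv \in U~|~\bv(0)=0\,\}$ is a closed subspace of $U$, and consequently $\bu \in U_0$.

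Next I would pass to the limit in the variational identity \eqref{Res1} written for $\bbf_n$. For a fixed $\bv \in X$: the convergence $\bu_n \to \bu$ in $X$ together with continuity of $a(\cdot,\cdot)$ gives $\int_0^T a(\bu_n(t),\bv(t))\,dt \to \int_0^T a(\bu(t),\bv(t))\,dt$; the convergence $\ddt{\bu_n} \to \ddt{\bu}$ in $L^2(Q)^d$ (which is contained in $U$-convergence) together with $\rho \in L^\infty(Q)$ and $\bw \in L^\infty(Q)^d$ gives $(\rho\dot\bu_n,\bv)_{L^2} \to (\rho\dot\bu,\bv)_{L^2}$; and $\bbf_n \to \bbf$ in $L^2$ gives convergence of the right-hand side. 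Hence $\bu$ satisfies \eqref{Res1} for the original data $\bbf$. Passing to the limit in $\|\bu_n\|_U \leq c\|\bbf_n\|_{L^2}$, using continuity of the norms, yields \eqref{Res2}.

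Finally, uniqueness follows exactly as at the end of the proof of Theorem~\ref{thm1}: by linearity it suffices to show that $\bbf = 0$ forces $\bu = 0$. Testing \eqref{Res1} with $\bv = \bu \in U_0$ and using the partial integration identity (Lemma~\ref{lem4}(ii) with $\bv = \bu$, equivalently \eqref{fundid} extended to $U$) together with $\bu(0)=0$ gives $(\rho\dot\bu,\bu)_{L^2} = \frac12 \|\rho^{\frac12}(T)\bu(T)\|_{L^2(\Omega)}^2 \geq 0$, so the ellipticity of $a(\cdot,\cdot)$ on $\cV$ forces $\|\bu\|_X = 0$, hence $\bu = 0$. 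I do not expect a real obstacle here; the only points needing (minor) care are the closedness of $U_0$ in $U$ and the fact that each term of \eqref{Res1} is continuous with respect to convergence in the $U$-norm — both immediate from the bounds already established.
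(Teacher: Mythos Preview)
Your argument is correct and is precisely the density-and-limiting argument the paper has in mind; the Corollary in the paper is stated without a detailed proof, with only the hint that density of $C(\overline I;L^2(\Omega)^d)$ in $L^2(I;L^2(\Omega)^d)$ is to be used. Your filling-in of the standard steps (Cauchy sequence via linearity and the a~priori bound, closedness of $U_0$, termwise passage to the limit, uniqueness as in Theorem~\ref{thm1}) matches the intended route.
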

\subsection{Well-posed  space-time variational formulation in $V$} \label{SectweakV}
We define $V^0:=\{\, \bv \in V~|~\bv(0)=0\,\}$, where the trace is well-defined due to \eqref{rrt}. As an easy consequence of the result obtained in Theorem~\ref{thm1} we obtain the following.
\begin{corollary} \label{cor1}
Let $a(\cdot,\cdot)$ be a continuous elliptic bilinear form on $\cV \times \cV$ that does not depend on $t$. For every $F \in X'$ there exists a unique $\bu \in V^0$ such that
\begin{equation} \label{Res1a}
 \la \rho \dot \bu, \bv \ra + \int_0^T a(\bu(t),\bv(t)) \, dt = F(\bv) \quad \text{for all}~~\bv \in X.
\end{equation}
Furthermore
\begin{equation} \label{Res2a}
  \|\bu\|_{V} \leq c \|F\|_{X'}
\end{equation}
holds, with a constant $c$ independent of $F$.
\end{corollary}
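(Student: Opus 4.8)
The plan is to obtain Corollary~\ref{cor1} from Theorem~\ref{thm1}, more precisely from the preceding corollary (its extension to right-hand sides $\bbf\in L^2(I;L^2(\Omega)^d)$), by a density-and-limit argument, using that the linear problem is stable in the $\|\cdot\|_V=\|\cdot\|_W$-norm. First I would record that the functionals $\bv\mapsto\int_0^T(\bbf(t),\bv(t))_{L^2(\Omega)}\,dt$ with $\bbf\in L^2(I;L^2(\Omega)^d)$ are dense in $X'=L^2(I;\cV')$: tested against divergence-free fields such an $\bbf$ acts through its Leray projection, which ranges over $L^2(I;H)$ with $H$ the $L^2$-closure of $\cV$; since $H$ is dense in $\cV'$ (Gelfand triple) and $L^2(I;H)\subset L^2(I;L^2(\Omega)^d)$, density in $X'$ follows. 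Hence for given $F\in X'$ I pick $\bbf_n\in L^2(I;L^2(\Omega)^d)$ with $F_n(\bv):=\int_0^T(\bbf_n(t),\bv(t))_{L^2(\Omega)}\,dt\to F$ in $X'$, and the preceding corollary yields a unique solution $\bu_n\in U_0\subset V^0$ of \eqref{Res1} with data $\bbf_n$.

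Next I would prove a uniform stability bound $\|\bu_n\|_V\le c\|F_n\|_{X'}$. Testing \eqref{Res1} with $\bv=\bu_n$, using ellipticity of $a$ and the partial-integration identity \eqref{fundid} (valid on $\cD$, so that $(\rho\dot\bu_n,\bu_n)_{L^2}=\frac12\|\rho^\frac12(T)\bu_n(T)\|_{L^2(\Omega)}^2\ge0$ because $\bu_n(0)=0$), together with Cauchy--Schwarz, gives $\gamma\|\bu_n\|_X^2\le\|F_n\|_{X'}\|\bu_n\|_X$, hence $\|\bu_n\|_X\le c\|F_n\|_{X'}$. From the equation itself, $\la\rho\dot\bu_n,\bv\ra=F_n(\bv)-\int_0^T a(\bu_n(t),\bv(t))\,dt$ (using $(\rho\dot\bu_n,\cdot)_{L^2}=\la\rho\dot\bu_n,\cdot\ra$ on $X$, cf.\ \eqref{id2}), so $\|\rho\dot\bu_n\|_{X'}\le\|F_n\|_{X'}+c\|\bu_n\|_X\le c\|F_n\|_{X'}$; together these give $\|\bu_n\|_V=\|\bu_n\|_W\le c\|F_n\|_{X'}$. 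By linearity $\bu_n-\bu_m$ solves \eqref{Res1a} with data $F_n-F_m$, so $\|\bu_n-\bu_m\|_V\le c\|F_n-F_m\|_{X'}\to0$; since $V$ is complete (a closed subspace of $W$), $\bu_n\to\bu$ in $V$.

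To conclude existence I pass to the limit in \eqref{Res1}. Convergence in $V$ means $\bu_n\to\bu$ in $X$ and $\rho\dot\bu_n\to\rho\dot\bu$ in $X'$, so for each fixed $\bv\in X$ one has $\la\rho\dot\bu_n,\bv\ra\to\la\rho\dot\bu,\bv\ra$, $\int_0^T a(\bu_n(t),\bv(t))\,dt\to\int_0^T a(\bu(t),\bv(t))\,dt$ by continuity of $a$, and $F_n(\bv)\to F(\bv)$; thus $\bu$ satisfies \eqref{Res1a}. The trace bound \eqref{rrt} of Lemma~\ref{lem4}(i) makes $\bu\mapsto\bu(0)$ continuous on $V$, so $\bu_n(0)=0$ passes to $\bu(0)=0$, i.e.\ $\bu\in V^0$; and letting $n\to\infty$ in the uniform bound gives \eqref{Res2a}. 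Uniqueness is immediate: if $\bu\in V^0$ solves \eqref{Res1a} with $F=0$, testing with $\bv=\bu\in X$ and applying Lemma~\ref{lem4}(ii) with $\bu=\bv$ gives $2\la\rho\dot\bu,\bu\ra=\|\rho^\frac12(T)\bu(T)\|_{L^2(\Omega)}^2\ge0$, so ellipticity forces $\|\bu\|_X=0$, whence $\rho\dot\bu=0$ in $X'$ and $\bu=0$ in $V$.

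The only genuinely delicate points are the density of the $L^2$-type functionals in $X'$ (handled via the Leray-projection remark and density of $H$ in $\cV'$) and arranging the a priori estimate directly in the $\|\cdot\|_W=\|\cdot\|_V$-norm (so that the bound involves $\|F_n\|_{X'}$ rather than $\|\bbf_n\|_{L^2}$, which would not control $\|F\|_{X'}$ in the limit); everything else is the standard continuation of a stable linear problem to a completed data space and would not require extra hypotheses.
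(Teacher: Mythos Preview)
Your proposal is correct and follows essentially the same approach as the paper's proof: approximate $F\in X'$ by functionals coming from $L^2$-type data, apply Theorem~\ref{thm1} (or its immediate corollary) to obtain $\bu_n\in U_0$, derive the a~priori $V$-bound by testing with $\bv=\bu_n$ and reading off $\|\rho\dot\bu_n\|_{X'}$ from the equation, then pass to the limit using completeness of $V$ and the trace result of Lemma~\ref{lem4}. The paper's proof differs only cosmetically---it approximates by $\bbf_n\in C(\bar I;L^2(\Omega)^d)$ and invokes Theorem~\ref{thm1} directly, and it asserts density in $X'$ without spelling out the Leray-projection/Gelfand-triple argument you give; your treatment of that density step is in fact more careful than the paper's.
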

\begin{proof}
 Take $F \in X'$. Due to the density of $C(\bar I;L^2(\Omega)^d)$ in $X'$ we can take a sequence $\bbf_n \in C(\bar I;L^2(\Omega)^d)$, $n \in \Bbb N$, with $\lim_{n \to \infty} \bbf_n  =F $ in $X'$. Let $\bu_n \in U_0$ be the unique solution of \eqref{Res1}. As test function we take $\bv=\bu_n$ in \eqref{Res1}. Using partial integration, $\bu_n(0)=0$ and ellipticity of $a(\cdot,\cdot)$ we get $\gamma \|\bu_n\|_X^2 \leq \|\bbf_n\|_{X'} \|\bu_n\|_X$, with ellipticity constant $\gamma >0$, and thus $\|\bu_n\|_X \leq \gamma^{-1} \|\bbf_n\|_{X'}$. This implies that $(\bu_n)_{n \in \Bbb N}$ is a Cauchy sequence in $X$. Take $\bu \in X$ such that $\lim_{n \to \infty} \bu_n= \bu$ in $X$.  Note that
 \begin{equation} \label{uu}
  \la \rho \dot \bu_n, \bv\ra  = (\rho \dot \bu_n, \bv)_{L^2}= - \int_0^T a(\bu_n(t),\bv(t)) \, dt + \int_0^T (\bbf_n(t),\bv(t))_{L^2(\Omega)} \, dt~~\forall~\bv \in X.
 \end{equation}
Hence, $\|\rho \dot \bu_n\|_{X'} \leq c (\|\bu_n\|_X + \|\bbf_n\|_{X'})$. This implies that $(\rho \dot \bu_n)_{n \in \Bbb N}$ is a Cauchy sequence in $X'$. Therefore $(\bu_n)_{n \in \Bbb N}$ is a Cauchy sequence in $V$ and $\lim_{n \to \infty} \rho \bu_n = \rho \dot \bu$ in $X'$ holds. Thus we get $\lim_{n \to \infty} \bu_n = \bu $ in $V$. From this and the trace inequality \eqref{rrt} we get $\bu(0)=0$, hence $\bu \in V^0$. If in \eqref{uu} we take $n \to \infty$ it follows that $\bu$ satisfies \eqref{Res1a}. Uniqueness of  $\bu$ follows by taking $F=0$ and $\bv=\bu$ in \eqref{Res1a}, partial integration identity \eqref{partint} and elliptcity of $a(\cdot,\cdot)$. From the estimates above we get $\|\bu_n\|_X + \|\rho \dot \bu_n\|_{X'} \leq c \|\bbf_n\|_{X'}$. Taking $n \to \infty$ we obtain the result in \eqref{Res2a}.
\end{proof}
\ \\[1ex]
If the (diffusion) coefficient $\mu$ in \eqref{weak1} would be constant, i.e., $\mu_1=\mu_2$ the result in Corollary~\ref{cor1} yields a well-posed weak formulation. In view of our applications, however,  the case $\mu_1 \neq \mu_2$ is  highly relevant. Therefore, in the remainder of this section we present an analysis that can handle the latter case. In that analysis the result derived in Corollary~\ref{cor1} will play an important role.
\ \\


For $F \in X'$ we consider the following generalization of the problem in \eqref{Res1a}. Determine $\bu \in V^0$ such that
\begin{equation} \label{defproblem}
 b(\bu,\bv):= \la \rho \dot \bu,\bv\ra + \int_0^T a(t;\bu(t),\bv(t)) \,dt = F(\bv) \quad \text{for all}~~\bv \in  X.
\end{equation}
In the remainder of this section we assume that the (possibly) $t$-dependent bilinear form $a(t;\cdot,\cdot)$ has the following properties:
\begin{align}
  \exists \, \gamma>0:\quad a(t;\bv, \bv) & \geq \gamma |\bv |_{1,\Omega}^2  \quad \text{for all}~~\bv \in \cV, ~t \in I, \label{Cond1} \\
 \exists \, \Gamma >0:\quad a(t;\bu, \bv) & \leq \Gamma |\bu |_{1,\Omega} |\bv |_{1,\Omega}  \quad \text{for all}~~\bu, \bv \in \cV, ~t \in I. \label{Cond2}
\end{align}
In the remainder we prove well-posedness of the variational problem in \eqref{defproblem}.  For this we first use the framework of the BNB-conditions, cf.~\cite{ErnGuermond}, to prove  well-posedness under the additional assumption that the bilinear form $a(t;\cdot,\cdot)$ is symmetric.  We then extend the well-posedness result to $a(t;\cdot,\cdot)$ that may be nonsymmetric.

From $ |\int_0^T a(t;\bu(t),\bv(t)) \, dt| \leq \Gamma \int_0^T |\bu(t) |_{1,\Omega} |\bv(t) |_{1,\Omega}\, dt \leq \Gamma \|\bu\|_X \|\bv\|_X$ for all $\bu, \bv \in X$ it follows that
\[
  |b(\bu,\bv)| \leq \sqrt{2} \max \{\Gamma,1\} \|\bu\|_V \|\bv\|_X \quad \text{for all}~~\bu \in V, \bv \in X.
\]
Hence $b(\cdot,\cdot)$ is continuous on $V^0 \times X$.

\begin{lemma}\label{la:infsup}
The inf-sup inequality
 \begin{equation}\label{infsup}
  \inf_{0\neq \bu \in V^0}~\sup_{ 0\neq \bv \in \overset{\phantom{.}}{X}} \frac{b(\bu,\bv)}{\|\bu\|_V\|\bv\|_X} \geq c_s 
 \end{equation}
holds with  $c_s= \frac{\sqrt{2}\, \gamma}{2(1+\Gamma^2)}$.
\end{lemma}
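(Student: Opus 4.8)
The plan is to establish the inf-sup condition \eqref{infsup} by constructing, for each nonzero $\bu \in V^0$, an explicit test function $\bv \in X$ for which $b(\bu,\bv)$ is bounded below by a fixed constant times $\|\bu\|_V\|\bv\|_X$. The natural candidate is a combination of $\bu$ itself (which controls the elliptic part via coercivity \eqref{Cond1}) and a lifting of the functional $\rho\dot\bu \in X'$ (which controls the material-derivative part). Concretely, let $\bz \in X$ be the Riesz representative in $X$ of the functional $\bv \mapsto \la \rho\dot\bu,\bv\ra$, so that $(\bz,\bv)_X = \la\rho\dot\bu,\bv\ra$ for all $\bv \in X$ and $\|\bz\|_X = \|\rho\dot\bu\|_{X'}$. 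The test function will be $\bv := \bu + \delta\bz$ for a suitable fixed $\delta>0$ to be optimized at the end.

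First I would compute $b(\bu,\bu)$. By the partial integration identity \eqref{partint} (with $\bu=\bv$) and $\bu(0)=0$ we have $\la\rho\dot\bu,\bu\ra = \tfrac12(\rho(T)\bu(T),\bu(T))_{L^2(\Omega)} \geq 0$, and by coercivity \eqref{Cond1} the elliptic term is at least $\gamma\|\bu\|_X^2$; hence $b(\bu,\bu) \geq \gamma\|\bu\|_X^2$. Next I would compute $b(\bu,\bz)$: the material-derivative part gives exactly $\la\rho\dot\bu,\bz\ra = (\bz,\bz)_X = \|\rho\dot\bu\|_{X'}^2$, while the elliptic part $\int_0^T a(t;\bu(t),\bz(t))\,dt$ is bounded in absolute value by $\Gamma\|\bu\|_X\|\bz\|_X = \Gamma\|\bu\|_X\|\rho\dot\bu\|_{X'}$ using \eqref{Cond2}. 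Combining, $b(\bu,\bv) = b(\bu,\bu) + \delta\, b(\bu,\bz) \geq \gamma\|\bu\|_X^2 + \delta\|\rho\dot\bu\|_{X'}^2 - \delta\Gamma\|\bu\|_X\|\rho\dot\bu\|_{X'}$. A Young inequality on the cross term, absorbing part into the $\gamma\|\bu\|_X^2$ term, leaves a bound of the form $c_1\|\bu\|_X^2 + c_2\|\rho\dot\bu\|_{X'}^2 \geq c_3\|\bu\|_V^2$ provided $\delta$ is chosen small enough (roughly $\delta \sim \gamma/\Gamma^2$); here one uses $\|\bu\|_V^2 = \|\bu\|_X^2 + \|\rho\dot\bu\|_{X'}^2$.

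It then remains to bound $\|\bv\|_X$ from above by a constant times $\|\bu\|_V$. Since $\|\bv\|_X \leq \|\bu\|_X + \delta\|\bz\|_X = \|\bu\|_X + \delta\|\rho\dot\bu\|_{X'} \leq (1+\delta)\|\bu\|_V$ (or, more carefully, $\sqrt{1+\delta^2}\,\sqrt2\,\|\bu\|_V$ via Cauchy-Schwarz), we obtain $b(\bu,\bv)/(\|\bu\|_V\|\bv\|_X) \geq c_s$ with an explicit $c_s$. Tracking the constants through the Young inequality with the optimal split and the optimal $\delta$ should yield precisely $c_s = \frac{\sqrt2\,\gamma}{2(1+\Gamma^2)}$. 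Finally, since $\bv=\bu+\delta\bz$ is nonzero whenever $\bu\neq 0$ (as $b(\bu,\bv)>0$), taking the supremum over $0\neq\bv\in X$ and then the infimum over $0\neq\bu\in V^0$ gives \eqref{infsup}.

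The main obstacle is not conceptual but bookkeeping: getting the constant $c_s$ exactly as stated requires choosing the Young parameter and $\delta$ jointly and optimally, and being careful about whether one estimates $\|\bv\|_X$ by the triangle inequality or by $\sqrt2(\|\bu\|_X^2+\delta^2\|\bz\|_X^2)^{1/2}$ — the stated form of $c_s$ (with $1+\Gamma^2$ rather than, say, $(1+\Gamma)^2$) strongly suggests the latter route and a specific choice such as $\delta=1$ or $\delta = \gamma/(1+\Gamma^2)$. A secondary point to verify is that $\bz$ is genuinely an element of $X$ and that $\rho\dot\bu$ being in $X'$ (which holds since $\bu\in V\subset W$) is all that is needed; no further regularity of $\bu$ is required because the identity \eqref{partint} from Lemma~\ref{lem4} already holds on all of $V$.
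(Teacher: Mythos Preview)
Your proposal is correct and essentially matches the paper's proof. The paper also uses the Riesz representative $\bz\in X$ of $\rho\dot\bu$ together with $\bu$ itself; the only cosmetic difference is that the paper writes the test function as $\bv=\bz+\delta\bu$ with $\delta=\frac{1+\Gamma^2}{2\gamma}\geq 1$, whereas you write $\bv=\bu+\delta\bz$ with $\delta$ small --- these are the same vector up to a scalar factor (your $\delta$ is the reciprocal of the paper's), and the Young split in the paper is the fixed choice $\Gamma\|\bu\|_X\|\bz\|_X\leq \tfrac12\Gamma^2\|\bu\|_X^2+\tfrac12\|\bz\|_X^2$, which leads directly to $c_s=\frac{\sqrt{2}\,\gamma}{2(1+\Gamma^2)}$.
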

\begin{proof}
Take $\bu\in V^0$.
From the uniform ellipticity of $a(t;\cdot,\cdot)$ and the partial integration result \eqref{partint}, combined with $\bu(0)=0$, we get 
\begin{equation}\label{eq:proofinfsup1}
 b(\bu,\bu)= \la \rho \dot{\bu},\bu\ra +  \int_0^T a(t;\bu,\bu) \ge \gamma \|\bu\|_X^2.
\end{equation}
This establishes the control of $\|\bu\|_X$. We also need control of $\|\rho \dot \bu\|_{X'}$ to bound the full norm $\|\bu\|_V$.  This is achieved  by using a duality argument between the Hilbert spaces $X$ and $X'$.
By Riesz' representation theorem, there is a unique $\bz\in X$ such that $\la \rho \dot \bu,\bv\ra = (\bz, \bv)_X$ for all $\bv\in X$, and $\|\bz\|_X = \|\rho \dot{\bu}\|_{X'}$ holds. Thus we obtain
\[
  \la \rho \dot{\bu},\bz\ra = (\bz,\bz)_X = \|\rho \dot{\bu}\|_{X'}^2.
\]
Therefore, using the uniform continuity of $a(t;\cdot,\cdot)$,  we get
\begin{equation}\label{eq:proofinfsup2}
 \begin{split}
  b(\bu,\bz) &= \la \rho \dot{\bu},\bz\ra +  \int_0^T a(t;\bu(t), \bz(t))\, dt  = \|\bz\|_X^2 + \int_0^T a(t;\bu(t),\bz(t))\, dt \\ & \ge \|\bz\|_X^2 - \frac12 {\Gamma^2}  \|\bu\|_X^2 - \frac12\|\bz\|_X^2 
    = \frac12\|\rho\dot{\bu}\|_{X'}^2 - \frac12 {\Gamma^2}  \|\bu\|_X^2.
\end{split}
\end{equation}
This establishes control of $\|\rho \dot{\bu}\|_{X'}$ at the expense of the $X$-norm, which is controlled in \eqref{eq:proofinfsup1}.
Therefore, we make the ansatz $\bv= \bz + \delta \bu\in X$ for some sufficiently large parameter $\delta \geq 1$. We have the estimate
\begin{equation}\label{aux20}
\|\bv\|_X\le \|\bz\|_X + \delta \|\bu\|_X \le \delta \|\rho \dot \bu\|_{X'} + \delta\|\bu\|_X\le \delta \sqrt{2} \|\bu\|_V.
\end{equation}
 From \eqref{eq:proofinfsup1} and \eqref{eq:proofinfsup2} we conclude
\[
  b(\bu,\bv) \ge \frac12\|\rho \dot{\bu}\|_{X'}^2 +(\delta  \gamma - \frac12 \Gamma^2)\|\bu\|_X ^2.
\]
Taking $\delta:=\frac{1}{2\gamma} (1+\Gamma^2) \geq 1$, we get
\[
   b(\bu,\bv) \ge \frac12\|\bu\|_V^2 \ge \frac{\sqrt{2}}{4} \delta^{-1}\|\bu\|_V\|\bv\|_X.
\]
This completes the proof.
\end{proof}
\ \\
\begin{lemma} \label{la:leminj}
Assume that for all $t \in I$ the bilinear form $a(t;\cdot,\cdot)$ is symmetric on $X$. If $b(\bu,\bv)=0$ holds for  all $\bu \in V^0$, then $\bv=0$.
\end{lemma}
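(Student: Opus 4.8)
The plan is to show that the adjoint problem is solvable, which by a BNB-type argument gives density of the range of $b(\bu,\cdot)$. Since the inf-sup inequality \eqref{infsup} of Lemma~\ref{la:infsup} already controls the ``primal'' direction, establishing that the only $\bv\in X$ annihilated by all $\bu\in V^0$ is $\bv=0$ is exactly what is needed to complete the well-posedness argument (via the standard Banach--Ne\v{c}as--Babu\v{s}ka theorem in \cite{ErnGuermond}). So suppose $\bv\in X$ satisfies $b(\bu,\bv)=0$ for all $\bu\in V^0$. I want to conclude $\bv=0$.

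The main idea is to test with a cleverly chosen $\bu$ built from $\bv$ itself, exploiting the symmetry of $a(t;\cdot,\cdot)$. First I would like to use $\bu$ related to $\int_t^T \bv$, but such a function need not lie in $V^0$ (it has the wrong initial/terminal behaviour and its material derivative must be controlled). The cleaner route: fix $\tau\in(0,T]$ and consider the ``time-reversed'' auxiliary problem. By Corollary~\ref{cor1} (or its time-dependent analogue, noting $a$ here is symmetric and satisfies \eqref{Cond1}--\eqref{Cond2}), applied to the backward-in-time evolution, there is for any $\bg\in X$ a solution $\bu_\bg\in V^0$ of $b(\bu_\bg,\cdot)=$ (the functional determined by $\bg$). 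Then $0=b(\bu_\bg,\bv)$, and by choosing $\bg$ appropriately and using the integration-by-parts identity \eqref{partint} together with symmetry of $a$, one identifies $b(\bu_\bg,\bv)$ with something like $(\bg,\bv)_X$ plus controlled boundary contributions at $t=T$; letting $\bg$ range over $X$ forces $\bv=0$.

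Concretely, the key steps in order: \textbf{(1)} Invoke the well-posedness in $V^0$ from Corollary~\ref{cor1} to produce, for each $\bg\in X$, a function $\bu\in V^0$ solving $\la\rho\dot\bu,\bw\ra+\int_0^T a(t;\bu(t),\bw(t))\,dt=(\bg,\bw)_X$ for all $\bw\in X$ --- here one checks that the proof of Corollary~\ref{cor1} only used ellipticity and continuity of $a$, hence goes through for the $t$-dependent symmetric $a$ satisfying \eqref{Cond1}--\eqref{Cond2}. \textbf{(2)} Plug this $\bu$ into the hypothesis $b(\bu,\bv)=0$. \textbf{(3)} Rewrite $b(\bu,\bv)$ using symmetry of $a$: $\int_0^T a(t;\bu,\bv)=\int_0^T a(t;\bv,\bu)$, and use the integration-by-parts identity \eqref{partint} from Lemma~\ref{lem4}(ii) to move the material derivative off $\bu$ and onto $\bv$; the boundary terms involve $\bu(0)=0$ (since $\bu\in V^0$) and $\bu(T)$, $\bv(T)$. \textbf{(4)} The resulting expression equates $(\bg,\bv)_X$ (from the defining equation of $\bu$) with a term of the form $-\la\rho\dot\bv,\bu\ra - \big(\rho(T)\bv(T),\bu(T)\big)_{L^2(\Omega)} + \int_0^T a(t;\bv,\bu)$; one must argue the right-hand side vanishes. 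This is where one re-uses the hypothesis, now in the form that $\bv$ satisfies a backward equation in the distributional sense with zero terminal data, and the trace inequality \eqref{rrt} (applied to a suitable space, or a direct argument) to make sense of $\bv(T)$ and show it is zero. \textbf{(5)} Conclude $(\bg,\bv)_X=0$ for all $\bg\in X$, hence $\bv=0$.

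The main obstacle I anticipate is step \textbf{(4)}: making the backward/adjoint argument rigorous when $\bv$ is only known to lie in $X$ (not a priori in $V$ or even with $\rho\dot\bv\in X'$). One has to first extract from the hypothesis $b(\bu,\bv)=0$ for all $\bu\in V^0$ that $\rho\dot\bv\in X'$ and that $\bv$ has a well-defined trace $\bv(T)=0$ in $L^2(\Omega)$; this is essentially a regularity bootstrap for the adjoint equation $-\rho\dot\bv + (\text{elliptic part})\bv = 0$ with zero terminal condition. The natural way is: the equation $b(\bu,\bv)=0$ tested over $\bu\in U_0$ (dense in $V^0$) shows $\int_0^T a(t;\bu,\bv) = -\la\rho\dot\bu,\bv\ra = (\rho\bu,\dot\bv)_{L^2}$ has the right boundedness in terms of $\|\bu\|_U$, from which $\dot\bv$ (hence $\rho\dot\bv$) is seen to be a bounded functional and the $t=T$ trace is identified via \eqref{fundid}/\eqref{partint}. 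Once $\bv$ is known to be in a space where \eqref{partint} applies to the pair $(\bu,\bv)$, the computation in steps \textbf{(3)}--\textbf{(5)} is routine. An alternative, possibly cleaner, obstacle-avoiding strategy would be to run the time-reversal $t\mapsto T-t$ at the outset, turning the problem into forward well-posedness for $\bv$ with the roles of initial/terminal data swapped, and then quote Corollary~\ref{cor1} directly for $\bv$; I would try that first and fall back to the bootstrap above if the reversed problem's hypotheses are not immediately in the form Corollary~\ref{cor1} requires.
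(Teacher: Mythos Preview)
Your proposal has a genuine circularity at step~(1). You want to invoke Corollary~\ref{cor1} to solve, for each $\bg\in X$, the problem
\[
\la\rho\dot\bu,\bw\ra+\int_0^T a(t;\bu(t),\bw(t))\,dt=(\bg,\bw)_X\qquad\forall\,\bw\in X,
\]
with the \emph{time-dependent} bilinear form $a(t;\cdot,\cdot)$, claiming that ``the proof of Corollary~\ref{cor1} only used ellipticity and continuity of $a$''. This is not correct: Corollary~\ref{cor1} rests on Theorem~\ref{thm1}, whose proof uses the $t$-independence of $a$ in an essential way, namely in the identity~\eqref{lp}, $\int_0^T a(\bu_m,\partial_t\bu_m)\,dt=\tfrac12\,a(\bu_m(T),\bu_m(T))\ge0$, to bound $\|\partial_t\bu_m\|_{L^2}$. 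For a general $t$-dependent (even symmetric) $a$ satisfying only \eqref{Cond1}--\eqref{Cond2}, this estimate fails, and indeed solvability of the problem in step~(1) is exactly the content of Theorem~\ref{mainprop1}, for which the present lemma is one of the two BNB ingredients. If step~(1) were available, the lemma would be an immediate consequence ($0=b(\bu_\bg,\bv)=(\bg,\bv)_X$), and steps~(3)--(5) would be unnecessary; but you cannot assume it.

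Your fallback strategies also do not close the gap. The bootstrap in step~(4) does yield $\rho\dot\bv\in X'$ (testing with $\bphi\in\cD_0\subset V^0$ gives $|\la\rho\dot\bv,\bphi\ra|\le\Gamma\|\bv\|_X\|\bphi\|_X$), hence $\bv\in W$; but the integration-by-parts identity~\eqref{partint} is only established for $\bv\in V$, and $V=W$ is precisely the open question of Remark~\ref{Remdifficulty}. The time-reversal idea faces the same obstacle: after $t\mapsto T-t$ the bilinear form is still $t$-dependent, so Corollary~\ref{cor1} does not apply. The paper's proof avoids all of this by applying Corollary~\ref{cor1} with the \emph{$t$-independent} auxiliary form $\Gamma(\cdot,\cdot)_{1,\Omega}$ to produce $\bz\in V^0$, and then exploiting the symmetry of $a(t;\cdot,\cdot)$ through a Cauchy--Schwarz inequality for the positive semidefinite form $\Gamma(\cdot,\cdot)_{1,\Omega}-a(t;\cdot,\cdot)$, together with the ellipticity gap $\Gamma-\gamma<\Gamma$, to force $\bv=0$.
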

\begin{proof} Take $\bv \in X$ such that 
\begin{equation} \label{ll}
 b(\bu,\bv)= \la \rho \dot \bu,\bv\ra + \int_0^T a(t;\bu(t),\bv(t)) \,dt = 0 \quad \text{for all}~\bu \in V^0.
\end{equation} 
From Corollary~\ref{cor1}
 with $F(\bw):= \int_0^T \Gamma (\bv(t),\bw(t))_{1,\Omega} \, dt$, $\bw \in X$, it follows that there exists a unique $\bz \in V^0$ such that 
 \begin{equation} \label{hh}
\la \rho \dot \bz, \bw \ra + \int_0^T \Gamma (\bz(t),\bw(t))_{1,\Omega} \, dt = \int_0^T \Gamma (\bv(t),\bw(t))_{1,\Omega} \, dt  \quad \text{for all}~~\bw \in X.
\end{equation}
We take $\bw=\bz$ in \eqref{hh}, and use \eqref{partint}, $\bz(0)=0$. We get \[
 \Gamma \|\bz\|_{X}^2 \leq  \int_0^T \Gamma (\bv(t),\bz(t))_{1,\Omega}\, dt \leq  \Gamma \int_0^T (\bv(t),\bv(t))^\frac12_{1,\Omega} (\bz(t),\bz(t))^\frac12_{1,\Omega} \, dt \leq  \Gamma \|\bv\|_{X} \|\bz\|_{X}.
\]
Hence, $\|\bz \|_{X} \leq   \|\bv\|_{X}$ holds. 
Using \eqref{ll} and taking $\bw=\bv$ in \eqref{hh} we obtain:
\begin{equation}
\begin{split}
 \Gamma\|\bv\|_{X}^2 & = \la \rho \dot \bz, \bv \ra + \int_0^T \Gamma (\bz(t),\bv(t))_{1,\Omega} \, dt \\ & = \int_0^T \Gamma(\bz(t),\bv(t))_{1,\Omega} - a(t; \bz(t), \bv(t))\, dt \label{zandv}.
\end{split} \end{equation}
We define 
\[
S:=\{t\in I~|~ \bv(t)\neq 0~\text{and}~\bz(t)\neq 0 \}.
\]
If $S$ has measure 0, then \eqref{zandv} shows that $\bv=0$. Thus it suffices to prove that $|S|>0$ leads to a contradiction. Assume that $|S|>0$ holds. We apply, for $t \in S$, the Cauchy-Schwarz inequality to the symmetric positive semi-definite bilinear form $\Gamma (\cdot,\cdot)_{1,\Omega}-a(t,\cdot,\cdot)$ and use the ellipticity property \eqref{Cond1}: 
\[ \begin{split}
 \Gamma\|\bv\|_{X}^2   & =  \int_S \Gamma (\bz(t),\bv(t))_{1,\Omega} - a(t; \bz(t), \bv(t))\, dt 
 \\ &\leq \int_S \big(\Gamma |\bz(t)|_{1,\Omega}^2 - a(t; \bz(t), \bz(t))\big)^\frac12 \big(\Gamma |\bv(t)|_{1,\Omega}^2 - a(t; \bv(t), \bv(t))\big)^\frac12 \, dt
 \\ & \leq \int_S (\Gamma- \gamma)|\bz(t)|_{1,\Omega} |\bv(t)|_{1,\Omega}\, dt <    \Gamma \|\bz\|_{X} \|\bv\|_{X} \leq \Gamma \|\bv\|_{X}^2,
\end{split}\]
which results in a contradiction.
 Hence $\bv=0$ must hold. 
\end{proof}
\medskip

As a direct consequence of the preceding two lemmas and the continuity of $b(\cdot,\cdot)$ on $V^0 \times X$ we obtain the following main well-posedness result.

\begin{theorem} \label{mainprop1} Assume that $a(t;\cdot,\cdot)$ satisfies \eqref{Cond1}-\eqref{Cond2} and is symmetric. 
For any $F\in X'$, the problem \eqref{defproblem} has a unique solution $\bu\in V^0$. This solution satisfies the a-priori estimate
\begin{equation} \label{b0prop}
\|\bu\|_V \le c_s^{-1} \|F\|_{X'}, \quad \text{with}~~c_s=\frac{\sqrt{2}\, \gamma}{2(1+\Gamma^2)}. 
\end{equation}
\end{theorem}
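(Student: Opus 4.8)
The plan is to obtain Theorem~\ref{mainprop1} as an immediate application of the Banach--Ne\v{c}as--Babu\v{s}ka (BNB) theorem for bounded bilinear forms on Hilbert (more generally, reflexive Banach) spaces, cf.~\cite{ErnGuermond}, with trial space $V^0$ and test space $X$. First I would record that both are Hilbert spaces: $X=L^2(I;\cV)$ is Hilbert by construction, and $V^0$ is the kernel of the bounded linear trace map $\bu\mapsto\bu(0)$ from $V$ into $L^2(\Omega)^d$ provided by Lemma~\ref{lem4}(i), hence a closed subspace of the Hilbert space $V$ and thus itself Hilbert. A right-hand side $F\in X'$ is exactly a bounded linear functional on the test space, so the abstract framework applies verbatim, and the solution operator produced by the BNB theorem has operator norm bounded by the reciprocal of the inf-sup constant.

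Next I would verify the three hypotheses of the BNB theorem. First, boundedness of $b(\cdot,\cdot)$ on $V^0\times X$: this is the estimate $|b(\bu,\bv)|\le\sqrt2\max\{\Gamma,1\}\,\|\bu\|_V\|\bv\|_X$ already derived just before Lemma~\ref{la:infsup} from \eqref{Cond2} and the definition of $\|\cdot\|_V$. Second, the inf-sup condition $\inf_{0\neq\bu\in V^0}\sup_{0\neq\bv\in X} b(\bu,\bv)/(\|\bu\|_V\|\bv\|_X)\ge c_s$ with $c_s=\sqrt2\,\gamma/(2(1+\Gamma^2))$: this is precisely Lemma~\ref{la:infsup}. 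Third, non-degeneracy of $b$ in the test variable, namely that $\bv\in X$ with $b(\bu,\bv)=0$ for all $\bu\in V^0$ forces $\bv=0$: this is Lemma~\ref{la:leminj}, and it is here (and only here) that the symmetry assumption on $a(t;\cdot,\cdot)$ enters. With these three properties in hand, the BNB theorem yields a unique $\bu\in V^0$ solving \eqref{defproblem} together with the a-priori estimate $\|\bu\|_V\le c_s^{-1}\|F\|_{X'}$, with the $c_s$ stated in \eqref{b0prop}.

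Since the substantive work has already been carried out in the two lemmas, the theorem itself presents no real obstacle; it is just the packaging step of the BNB machinery, and the sentence preceding the statement already signals this. If one were proving everything from scratch, the genuine difficulties — and, I expect, where the author's analysis concentrates its effort — are the two ingredients being quoted: establishing \eqref{infsup}, which needs the Riesz duality argument producing $\bz\in X$ with $\la\rho\dot\bu,\bz\ra=\|\rho\dot\bu\|_{X'}^2$, the composite test function $\bv=\bz+\delta\bu$, and a careful choice of $\delta$ using \eqref{Cond1}; and Lemma~\ref{la:leminj}, which exploits symmetry of $a$ through a Cauchy--Schwarz inequality applied to the positive semidefinite form $\Gamma(\cdot,\cdot)_{1,\Omega}-a(t;\cdot,\cdot)$ in combination with the solvability result of Corollary~\ref{cor1}. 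Given those, checking the BNB hypotheses and reading off the explicit stability constant is routine.
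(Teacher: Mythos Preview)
Your proposal is correct and matches the paper's approach exactly: the paper states the theorem as ``a direct consequence of the preceding two lemmas and the continuity of $b(\cdot,\cdot)$ on $V^0\times X$,'' which is precisely the BNB packaging you describe. Your additional remarks about $V^0$ being closed in $V$ and about where the symmetry hypothesis enters are accurate and make explicit what the paper leaves implicit.
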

\ \\[1ex]
We can apply this result to the time dependent bilinear form used  in the weak formulation of our original problem, cf.~\eqref{rt}. Hence, we obtain the following result, which \emph{shows well-posedness of the problem \eqref{weak1} with $W$ replaced by the (possibly) smaller subspace $V$}.
\begin{corollary} \label{corolmain}   For $F \in X'$ there exists a unique $\bu \in V^0$ such that 
\[
 \la \rho \dot \bu, \bv \ra + (\mu D(\bu),D(\bv))_{L^2} = F(\bv) \quad \text{for all}~~ \bv \in X.
 \]
 Furthermore $\|\bu\|_V \le c \|F\|_{X'}$ holds with a constant $c$ independent of $F$.
\end{corollary}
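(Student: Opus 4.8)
\textbf{Proof plan for Corollary~\ref{corolmain}.}
The strategy is to recognize that the statement is an immediate specialization of Theorem~\ref{mainprop1}. First I would set, for each $t \in I$,
\begin{equation*}
 a(t;\bu,\bv) := \int_\Omega \mu(\cdot,t)\, D(\bu):D(\bv)\, dx, \quad \bu,\bv \in \cV,
\end{equation*}
so that $\int_0^T a(t;\bu(t),\bv(t))\,dt = (\mu D(\bu),D(\bv))_{L^2}$ and the bilinear form $b(\cdot,\cdot)$ in \eqref{defproblem} coincides exactly with the form in the statement. It then remains to verify that $a(t;\cdot,\cdot)$ satisfies the three hypotheses of Theorem~\ref{mainprop1}: symmetry, uniform continuity \eqref{Cond2}, and uniform ellipticity \eqref{Cond1}.

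Symmetry is clear since $D(\bu):D(\bv) = D(\bv):D(\bu)$ pointwise. Continuity \eqref{Cond2} follows from $\mu \le \mu_{\max} := \max\{\mu_1,\mu_2\}$ and $\|D(\bv)\|_{L^2(\Omega)} \le 2\|\nabla \bv\|_{L^2(\Omega)} = 2|\bv|_{1,\Omega}$, which gives \eqref{Cond2} with $\Gamma = 4\mu_{\max}$ (a Cauchy--Schwarz step on the $\Omega$-integral). For ellipticity \eqref{Cond1} I would use $\mu \ge \mu_{\min} := \min\{\mu_1,\mu_2\} > 0$ to reduce to the inequality $\|D(\bv)\|_{L^2(\Omega)}^2 \ge c\,|\bv|_{1,\Omega}^2$ for $\bv \in \cV$; this is the Korn-type bound. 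Since $\bv \in H_0^1(\Omega)^d$ it suffices to invoke the first Korn inequality $\|D(\bv)\|_{L^2(\Omega)}^2 \ge 2\|\nabla \bv\|_{L^2(\Omega)}^2$ valid for functions vanishing on $\partial\Omega$ (obtained by integration by parts, the boundary terms dropping out); in fact for divergence-free $\bv$ one even has $\|D(\bv)\|_{L^2}^2 = 2\|\nabla\bv\|_{L^2}^2$. This yields \eqref{Cond1} with $\gamma = 2\mu_{\min}$, uniformly in $t$ since $\mu_{\min}$ does not depend on $t$.

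With \eqref{Cond1}--\eqref{Cond2} and symmetry established, Theorem~\ref{mainprop1} applies verbatim and delivers existence and uniqueness of $\bu \in V^0$ solving the equation, together with $\|\bu\|_V \le c_s^{-1}\|F\|_{X'}$ where $c_s = \frac{\sqrt 2\,\gamma}{2(1+\Gamma^2)}$ depends only on $\mu_{\min},\mu_{\max}$ and hence is independent of $F$. I do not anticipate a genuine obstacle here; the only point requiring a moment's care is the Korn inequality on $H_0^1(\Omega)^d$ (used to get $\gamma > 0$), and one should note that the assumed spatial regularity of $\mu$ — piecewise constant, hence bounded and measurable — is exactly what is needed for $a(t;\cdot,\cdot)$ to be well-defined and for the constants $\gamma,\Gamma$ to be uniform in $t$ despite the moving interface.
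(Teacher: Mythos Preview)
Your proposal is correct and follows exactly the approach the paper intends: the corollary is stated immediately after Theorem~\ref{mainprop1} with only the remark that one ``can apply this result to the time dependent bilinear form'' $a(t;\bu,\bv)=\int_\Omega \mu D(\bu):D(\bv)\,dx$, leaving the verification of symmetry, \eqref{Cond1} and \eqref{Cond2} implicit. Your explicit check of these hypotheses (in particular the first Korn inequality on $H_0^1(\Omega)^d$, which gives the uniform ellipticity constant $\gamma=2\mu_{\min}$) is more detailed than what the paper writes but is precisely the intended argument.
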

\ \\[1ex]
We derive a generalization of Theorem~\ref{mainprop1} in which the condition that $a(t;\cdot,\cdot)$ is symmetric is not needed.

\begin{theorem}\label{mainthm1}
Assume that $a(t;\cdot,\cdot)$ satisfies \eqref{Cond1}-\eqref{Cond2}. 
For any $F\in X'$, the problem \eqref{defproblem} has a unique solution $\bu\in V^0$. This solution satisfies the a-priori estimate
\begin{equation} \label{b0}
\|\bu\|_V \le c_s^{-1} \|F\|_{X'}, \quad \text{with}~~c_s=\frac{\sqrt{2}\, \gamma}{2(1+\Gamma^2)}. 
\end{equation}
\end{theorem}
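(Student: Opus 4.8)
The plan is to deduce Theorem~\ref{mainthm1} from the symmetric case, Theorem~\ref{mainprop1}, by a homotopy (method of continuity) argument. The crucial observation is that symmetry of $a(t;\cdot,\cdot)$ was \emph{not} used in Lemma~\ref{la:infsup}: its proof only invokes the ellipticity and continuity bounds \eqref{Cond1}--\eqref{Cond2} and the integration-by-parts identity \eqref{partint}. Symmetry entered only through Lemma~\ref{la:leminj} (injectivity of the adjoint), and it is precisely this ingredient that must be replaced.

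First I would split $a(t;\bu,\bv)=a_s(t;\bu,\bv)+a_k(t;\bu,\bv)$ into its symmetric part $a_s(t;\bu,\bv):=\frac12\big(a(t;\bu,\bv)+a(t;\bv,\bu)\big)$ and skew part $a_k(t;\bu,\bv):=\frac12\big(a(t;\bu,\bv)-a(t;\bv,\bu)\big)$, and for $\theta\in[0,1]$ set $a_\theta:=a_s+\theta a_k$, $b_\theta(\bu,\bv):=\la\rho\dot\bu,\bv\ra+\int_0^T a_\theta(t;\bu(t),\bv(t))\,dt$, with associated operator $B_\theta\colon V^0\to X'$, $(B_\theta\bu)(\bv):=b_\theta(\bu,\bv)$. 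Since $a_k(t;\bv,\bv)=0$ we have $a_\theta(t;\bv,\bv)=a(t;\bv,\bv)\ge\gamma|\bv|_{1,\Omega}^2$, so every $a_\theta$ satisfies \eqref{Cond1} with the same $\gamma$; and $|a_s(t;\bu,\bv)|\le\Gamma|\bu|_{1,\Omega}|\bv|_{1,\Omega}$ together with $|a_k(t;\bu,\bv)|\le\Gamma|\bu|_{1,\Omega}|\bv|_{1,\Omega}$ give \eqref{Cond2} for $a_\theta$ with constant $2\Gamma$, uniformly in $\theta$. Applying Lemma~\ref{la:infsup} to $b_\theta$ therefore yields a uniform inf--sup bound $\|B_\theta\bu\|_{X'}\ge c_0\|\bu\|_V$ for all $\bu\in V^0$, $\theta\in[0,1]$, with $c_0:=\frac{\sqrt2\,\gamma}{2(1+4\Gamma^2)}>0$ independent of $\theta$; in particular each $B_\theta$ is injective. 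Note also that $V^0$ is complete, being the kernel of the bounded trace operator $\bv\mapsto\bv(0)$ from Lemma~\ref{lem4}(i), and that $B_\theta=(1-\theta)B_0+\theta B_1$ depends affinely on $\theta$.

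Next I would run the method of continuity. At $\theta=0$ the form $a_0=a_s$ is symmetric and satisfies \eqref{Cond1}--\eqref{Cond2}, so Theorem~\ref{mainprop1} shows that $B_0\colon V^0\to X'$ is a bijection, in particular surjective. By the classical continuation argument (its hypothesis being exactly the uniform lower bound just established), the set $\{\theta\in[0,1]\ :\ B_\theta\ \text{is surjective}\}$ is nonempty, open and closed, hence equals $[0,1]$; thus $B_1$ is surjective. Combined with its injectivity, $B_1$ is a bijection, so for each $F\in X'$ the problem \eqref{defproblem} has a unique solution $\bu\in V^0$. For the a priori bound \eqref{b0} with the stated constant I would simply re-apply Lemma~\ref{la:infsup} to $b=b_1$ using the \emph{original} constants $\gamma,\Gamma$ of $a$ itself, which gives $\|\bu\|_V\le c_s^{-1}\|F\|_{X'}$ with $c_s=\frac{\sqrt2\,\gamma}{2(1+\Gamma^2)}$.

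The real obstacle is conceptual rather than computational: a naive Neumann-series perturbation of the symmetric problem by $a_k$ does not close, since $\|B_0^{-1}\|\,\|B_1-B_0\|$ need not be $<1$ (it scales like $\Gamma^3/\gamma$). The homotopy argument circumvents this, and the only point needing care is verifying that the ellipticity and continuity constants — and hence the inf--sup constant — stay uniform along the path $\theta\mapsto a_\theta$, which is immediate once one notices that the skew part drops out of the coercivity estimate.
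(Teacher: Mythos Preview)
Your proof is correct and follows essentially the same route as the paper: a continuation from the symmetric problem $a_s$ (handled by Theorem~\ref{mainprop1}) to the full $a=a_s+a_k$, exploiting that the skew part drops out of the coercivity estimate so the relevant lower bound is uniform along the path. The only difference is packaging: the paper writes the continuation as an explicit finite induction (splitting $C$ into $N$ pieces $C_N=\tfrac1N C$ and applying a Neumann-series step $N$ times, using the $X$-norm coercivity bound $\gamma\|\bu\|_X\le\|(B+kC_N)\bu\|_{X'}$ at each step), whereas you invoke the method of continuity as a black box and use the $V$-norm inf--sup from Lemma~\ref{la:infsup} for the uniform lower bound; both arguments are equivalent, and as a minor aside your continuity constant for $a_\theta$ can be sharpened to $\Gamma$ (since $a_\theta=\tfrac{1+\theta}{2}a(\bu,\bv)+\tfrac{1-\theta}{2}a(\bv,\bu)$ is a convex combination), though this does not affect the final bound.
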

\begin{proof} Recall the Neumann series result, that if $A \in \mathcal{L}(X,X)$ for some Banach space $X$ and $\|A\|_{\mathcal{L}(X,X)}<1$, then $I+A$ is an isomorphism on $X$ and  $(I+A)^{-1}\in \mathcal{L}(X,X)$ (see \cite[\S 5.7]{Alt2016}). 
We introduce some notation. Define the anti-symmetric part of $a(t;\cdot,\cdot)$: \[
c(t;\bu,\bv) := \frac12 a(t;\bu,\bv) - \frac12 a(t;\bv,\bu), \quad  \bu,\bv \in X.
\]
 We split the problem into a problem that we have treated in Theorem \ref{mainprop1}: $B\bu = b(\bu,\cdot)-\int_0^T c(t;\bu,\cdot)\in X'$ and a anti-symmetric part $C\bu = \int_0^T c(t;\bu,\cdot) \in X'$, hence \eqref{defproblem} has the operator representation $(B+C)\bu=F$. For $k\in \mathbb{N}$ we set $C_k:=\frac{1}{k}C$. Take $N\in  \mathbb{N}$ sufficiently large such that  $\|C_N\|_{\mathcal{L}(X,X')}\leq \frac{\gamma}{2}$ holds.
We prove the following statement by induction: for $k\in \mathbb{N}$ the operator $B+kC_N\in \mathcal{L}(V^0,X')$ is an isomorphism and $\|(B+kC_N)^{-1}\|_{ \mathcal{L}(X',X) } \leq \frac{1}{\gamma}$ holds.\\
For $k=0$ we can apply Theorem \ref{mainprop1}, because the symmetric part of $a(t;\cdot,\cdot)$ also satisfies \eqref{Cond1}-\eqref{Cond2}. Hence $B \in \mathcal{L}(V^0,X')$ is an isomorphism. The estimate $\|B^{-1}\|_{\mathcal{L}(X',X)} \leq \frac{1}{\gamma}$ follows from  \eqref{eq:proofinfsup1}.  We now treat the induction step. Assume that for given $k$ the statement holds. This implies
\[
\|C_N(B+kC_N)^{-1} \|_{\mathcal{L}(X',X')} \leq \|C_N\|_{\mathcal{L}(X,X')} \|(B+kC_N)^{-1} )\|_{\mathcal{L}(X',X)}\leq \frac{1}{2}
\]
and thus by the Neumann series result we get that $I+C_N(B+kC_N)^{-1} \in \mathcal{L}(X',X')$ is an isomorphism on $X'$. Using this, the induction hypothesis and the relation
\[
 B+(k+1)C_N=\big(I+ C_N(B+kC_N)^{-1}\big) (B+k C_N)
\]
it follows that $B+(k+1)C_N \in \mathcal{L}(V^0,X')$ is an isomorphism.  Using the antisymmetry property of $C$, i.e., $\la C_N\bu,\bu\ra=0$ and the ellipticity of $B$, cf.~\eqref{eq:proofinfsup1}, we get for arbitrary $\bu \in V^0$: 
 \[
\gamma \|\bu \|_X^2 \leq \la B\bu ,\bu \ra = \la (B+(k+1)C_N)\bu ,\bu \ra \leq \|(B+(k+1)C_N)\bu \|_{X'}\|\bu\|_X,
\]
hence, $\|(B+(k+1)C_N)^{-1}\|_{\mathcal{L}(X',X)} \leq \frac{1}{\gamma}$, which completes the induction. Taking $k=N$ we obtain that $B+C \in \mathcal{L}(V^0,X')$ is an isomorphism.
From \eqref{infsup} and $b(\bu,\cdot)=F$ we get \[
c_s \|\bu\|_V \leq \sup_{0\neq \bv \in X} \frac{b(\bu,\bv)}{\|\bv\|_X} =\|F\|_{X'},
\]
which completes the proof.
\end{proof}

\section{Space-time variational formulation in a broken space} \label{sectbroken}
In view of the fact that we want to use a DG method in time, we will now study a time-discontinuous weak formulation. Let $N\in \nn$, let $0=t_0<\dots<t_N=T$ and let $I_n=(t_{n-1},t_n)$ for $n=1,\dots,N$. 
For $\bv \in X$ we define $\bv_n:=\bv|_{I_n} \in X_n:=L^2(I_n;\cV) \subset X$, $1 \leq n \leq N$. Furthermore 
\[
V_n := \{ \bv_n~|~ \bv\in V \}, \quad 1 \leq n \leq N, ~~V^b:=\bigoplus_{n=1}^N V_n  \subset X.
\]
We define jumps at $t_n$ in the usual way. For $\bu \in V^b$: \[
[\bu]^n := \bu(t_n+) - \bu(t_n-)=: \bu_+^n- \bu_-^n, \quad 0 \leq n \leq N-1, ~~ \bu_-^{0}:=0.
\]
Note that the superscript $n$ denotes an evaluation at $t=t_n$, whereas $\bv_n$ denotes the restriction of $\bv$ to $I_n$.
Note that
\begin{equation} \label{V0V}
 V^0=\{\, \bv \in V^b~|~[\bv]^n=0, \quad 0 \leq n \leq N-1\,\}.
\end{equation}
For $\bu_n \in V_n$ we define
\[
 \la\rho \dot \bu_n, \bv \ra_n:= \la \rho \dot \bu, \bv_n \ra \quad \text{for all} ~~\bv \in X.
\]
Hence $\rho \dot \bu_n \in X_n'$.
\begin{remark}\rm On $Q_n:=I_n \times \Omega$ we can define a  set of smooth functions analogous to \eqref{defD0} by
\begin{align} \label{defD00}
 \cD_{0}^n:= \{\, \sum_{i=1}^m g_i \bphi_i~|~ m \in \Bbb{N}, \, g_i \in C_0^\infty(I_n),\, \bphi_i \in C_0^1(\Omega)^d \cap \cV\, \} \subset C_0^1(Q_n)^d
\end{align}
which is dense in $X_n$. Thus we get
\[
   \la\rho \dot \bu_n, \bphi \ra_n = - \int_{I_n} (\rho \bu_n(t), \dot \bphi(t))_{L^2} \, dt \quad \text{for all} ~~\bphi \in \cD_{0}^n. 
\]
Hence $\rho \dot \bu_n$ is the same weak material derivative as in Section~\ref{sectweak}, with $I$ replaced by $I_n$. Thus we have analogous results, e.g. as in \eqref{fundid}. In particular, for $\bu_n \in C^1(\bar{Q}_n)^d \cap X$ we have
\begin{equation} \label{res5}
 \la \rho \dot \bu_n, \bv\ra = \int_{I_n} (\rho \dot \bu_n(t),\bv(t))_{L^2} \, dt \quad \text{for all}~~\bv \in X.
\end{equation}
We also have
\[
  \la \rho \dot \bu, \bv \ra = \sum_{n=1}^N \la \rho \dot \bu,\bv_n \ra= \sum_{n=1}^N \la \rho \dot \bu_n,\bv\ra_n \quad \text{for all}~~\bu \in V, ~\bv \in X.
\]
Using $X'=L^2(I;\cV')= \oplus_{n=1}^N L^2(I_n;\cV')$ we get
\begin{equation} \label{idf}
 \|\rho \dot \bu\|_{X'}^2 = \int_{I} \|\rho \dot \bu(t)\|_{\cV'}^2\,dt = \sum_{n=1}^N\int_{I_n} \|\rho \dot \bu(t)\|_{\cV'}^2\,dt = \sum_{n=1}^N\|\rho \dot \bu(t)\|_{X_n'}^2 \quad \text{for}~\bu \in V.
\end{equation}
\end{remark}
\ \\

A broken weak time derivative is defined in the canonical way:
\[
  \la \rho \dot \bu, \bv \ra_b:= \sum_{n=1}^N \la \rho \dot \bu_n, \bv\ra_n, \quad \bu \in V^b,~ \bv \in X. 
\]
Hence,
\begin{equation} \label{eqder}
 \la \rho \dot \bu, \bv \ra_b= \la \rho \dot \bu, \bv \ra \quad \text{for all}~~\bu\in V,~\bv \in X.
\end{equation}

For controlling the jumps at the interval end points we introduce the usual discontinuous Galerkin bilinear form
\begin{equation}
 d(\bu,\bz):= \sum_{n=0}^{N-1} ([\bu]^n, \bz^n)_{L^2}, \quad \bu \in V^b, \bz^n \in L^2(\Omega)^d,
\end{equation}
with $\bz=(\bz^0, \ldots, \bz^{N-1}) \in (L^2(\Omega)^d)^N$. 

As test space in the weak formulation below we use $Y:=X \times H^N= \oplus_{n=1}^N (X_n\times H)$, where $H:=\overline{\V}^{L^2}$.
We consider the following weak formulation: given $F \in X', G\in H'$ determine $\bu \in V^b$ such that
\begin{equation} \label{brokenproblem}
\begin{split}
  B(\bu,(\bv,\bz)) & = F(\bv)+ G(\bz)\quad \text{for all} ~~(\bv,\bz) \in Y, \\
  \text{with}~~B(\bu,(\bv,\bz)) & := \la \rho \dot \bu, \bv\ra_b + d(\bu,\bz)+ \int_0^T a(t;\bu(t),\bv(t))\, dt .
\end{split} \end{equation}
Note that with $b(\cdot,\cdot)$ as in \eqref{defproblem} we have
\begin{equation} \label{ppA}
 B(\bu,(\bv,\bz))= b(\bu,\bv) \quad \text{for all}~~\bu \in V, ~(\bv,\bz) \in Y. 
\end{equation}
In the next theorem we derive equivalence results between different variational formulations.
\begin{theorem} \label{thmequivalence} Let the assumptions as in Theorem~\ref{mainthm1} be satisfied. For $F \in X'$ let $\bu\in V^0$ be the unique solution of \eqref{defproblem}. Then $\bu$ is also the unique solution of each of the following variational problems:\\
1. The problem \eqref{brokenproblem} with $G=0$. \\
2. Determine  $\bu \in V^b$ such that
\begin{equation} \label{vari2}
\la \rho \dot \bu, \bv\ra_b + d(\bu,\rho \bv_+)+ \int_0^T a(t;\bu(t),\bv(t))\, dt = F(\bv) \quad \text{for all} ~\bv \in V^b ,
\end{equation}
with $\rho \bv_+:=(\rho(t_0)\bv_+^0,\ldots, \rho(t_{N-1}) \bv_+^{N-1})$.
\end{theorem}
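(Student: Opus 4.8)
The plan is to establish the two claimed equivalences one at a time, in each case arguing in two steps: first that the unique solution $\bu\in V^0$ of \eqref{defproblem} \emph{is} a solution of the broken problem, and second that the broken problem has \emph{at most one} solution; since \eqref{defproblem} is uniquely solvable by Theorem~\ref{mainthm1}, combining the steps identifies $\bu$ as the unique solution. The ingredients I would use are the inclusions $V^0\subset V\subset V^b\subset X$, the description \eqref{V0V} of $V^0$ as the subspace of $V^b$ with vanishing jumps, the identity $\la\rho\dot\bu,\bv\ra_b=\la\rho\dot\bu,\bv\ra$ for $\bu\in V,\ \bv\in X$ (cf.~\eqref{eqder}), the relation $B(\bu,(\bv,\bz))=b(\bu,\bv)$ for $\bu\in V$ (cf.~\eqref{ppA}), and the analogues of Lemma~\ref{lem4}(i)--(ii) on each subinterval $I_n$ (a bounded trace operator $V_n\to L^2(\Omega)^d$ at both endpoints of $I_n$ and the integration-by-parts formula on $I_n$), obtained exactly as in the proof of Lemma~\ref{lem4} using that smooth functions are dense in $V_n$; see also \eqref{res5} and \eqref{fundid}. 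I would also note at the outset that, being $L^2(\Omega)^d$-limits of values in $\cV\subset H$, the endpoint traces $\bu^n_\pm$ of functions in $V^b$ lie in $H=\overline{\V}^{L^2}$.

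For the first statement, one inclusion is immediate from \eqref{ppA}: if $\bu\in V^0$ solves \eqref{defproblem}, then $B(\bu,(\bv,\bz))=b(\bu,\bv)=F(\bv)$ for all $(\bv,\bz)\in Y$, so $\bu$ solves \eqref{brokenproblem} with $G=0$. For uniqueness I would take any $\bu\in V^b$ solving \eqref{brokenproblem} with $G=0$, test with $(\bv,\bz)=(0,\bz)$ to obtain $d(\bu,\bz)=\sum_{n=0}^{N-1}([\bu]^n,\bz^n)_{L^2}=0$ for all $\bz\in H^N$, and then choose $\bz^n=[\bu]^n\in H$ to conclude $[\bu]^n=0$ for $n=0,\dots,N-1$, hence $\bu\in V^0$ by \eqref{V0V}; at that point \eqref{ppA} shows that $\bu$ solves \eqref{defproblem}, and Theorem~\ref{mainthm1} gives uniqueness.

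For the second statement, the easy inclusion is again a collapse: for $\bu\in V^0$ and any $\bv\in V^b\subset X$ one has $\la\rho\dot\bu,\bv\ra_b=\la\rho\dot\bu,\bv\ra$ by \eqref{eqder} and $d(\bu,\rho\bv_+)=0$ because all jumps of $\bu$ vanish, so the left-hand side of \eqref{vari2} equals $b(\bu,\bv)=F(\bv)$. For uniqueness, by linearity I only need that $\bu\in V^b$ solving \eqref{vari2} with $F=0$ must vanish. Testing with $\bv=\bu$ and using the integration-by-parts identity on each $I_n$ for $\la\rho\dot\bu_n,\bu_n\ra_n$, a telescoping/summation-by-parts computation — in which the weight $\rho(t_n)$ attached to $\bv_+^n$ in the discontinuous-Galerkin term $d(\bu,\rho\bv_+)$ is exactly what makes the interior contributions cancel — should give
\[
 \la\rho\dot\bu,\bu\ra_b + d(\bu,\rho\bu_+) = \frac12\|\rho(T)^{\frac12}\bu_-^N\|_{L^2(\Omega)}^2 + \frac12\|\rho(0)^{\frac12}\bu_+^0\|_{L^2(\Omega)}^2 + \frac12\sum_{n=1}^{N-1}\|\rho(t_n)^{\frac12}[\bu]^n\|_{L^2(\Omega)}^2 \geq 0.
\]
Combined with the ellipticity $\int_0^T a(t;\bu,\bu)\,dt\geq\gamma\|\bu\|_X^2$ from \eqref{Cond1}, this forces $0=F(\bu)\geq\gamma\|\bu\|_X^2$, i.e.\ $\bu=0$.

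The step I expect to be the main obstacle is the telescoping identity in the second statement: one must track the endpoint values $\bu^n_\pm$ carefully, use the polarization identity $(\rho(t_n)\bu_-^n,\bu_+^n)_{L^2(\Omega)}=\frac12\|\rho(t_n)^{\frac12}\bu_-^n\|_{L^2(\Omega)}^2+\frac12\|\rho(t_n)^{\frac12}\bu_+^n\|_{L^2(\Omega)}^2-\frac12\|\rho(t_n)^{\frac12}[\bu]^n\|_{L^2(\Omega)}^2$ at each interior node $t_n$, and verify that the resulting interior terms $\pm\frac12\big(\|\rho(t_n)^{\frac12}\bu_-^n\|_{L^2(\Omega)}^2-\|\rho(t_n)^{\frac12}\bu_+^n\|_{L^2(\Omega)}^2\big)$ coming from $\la\rho\dot\bu,\bu\ra_b$ and from $d(\bu,\rho\bu_+)$ cancel, leaving only the nonnegative boundary and jump terms. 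A minor point used throughout is that $\rho(\cdot,t_n)$ is a well-defined trace of $\rho$ in time (the interface moves continuously and $\dot\rho=0$), so the per-interval integration-by-parts formulas make sense.
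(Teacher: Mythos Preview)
Your proof of Part~1 matches the paper's essentially verbatim. For Part~2 you take a genuinely different route for uniqueness: you test \eqref{vari2} with $\bv=\bu$ and use the standard DG energy identity (the telescoping computation you describe, which is correct) to conclude $\bu=0$ directly. The paper instead argues structurally: it observes that the set of test pairs $(\bv,\rho\bv_+)$ with $\bv\in V^b$ is dense in $X\times H^N$, and since all three pieces of $B(\bu,\cdot)$ are continuous on $X\times H^N$, a solution of \eqref{vari2} automatically solves \eqref{brokenproblem} with $G=0$, whence uniqueness follows from Part~1. Your approach is more elementary and self-contained, and yields the explicit nonnegative energy identity as a byproduct (useful for discrete stability); the paper's approach avoids the per-interval integration-by-parts computation by reducing to the case already handled, at the cost of invoking a density claim that is itself not entirely trivial.
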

\begin{proof}
Let $\bu\in V^0$ be the unique solution of \eqref{defproblem}. Then $d(\bu,\bz)=0$ for all $\bz \in H^N$ and, cf.~\eqref{eqder}, $\la \rho \dot \bu, \bv \ra_b= \la \rho \dot \bu, \bv \ra$. Hence, $\bu \in V^0\subset V^b$ solves \eqref{brokenproblem} with $G=0$. Let $\bu \in V^b$ be a solution of \eqref{brokenproblem} with $G=0$. Taking $\bv=0$ we get $d(\bu,\bz)=0$ for all $\bz \in H^N$. This implies $[\bu]^n=0,~ 0 \leq n \leq N-1,$ and thus, cf. \eqref{V0V}, $\bu \in V^0$. Take $\bz =0$ and using $\la \rho \dot \bu, \bv \ra_b= \la \rho \dot \bu, \bv \ra$ we conclude that $\bu$ solves  \eqref{defproblem}. Hence, the unique solution  $\bu\in V^0$ of \eqref{defproblem} is also the unique solution of  \eqref{brokenproblem} with $G=0$.
\\
Let $\bu\in V^0$ be the unique solution of \eqref{defproblem}, which is also the unique solution of  \eqref{brokenproblem} with $G=0$. Taking arbitrary $\bv \in V^b \subset X$ and $\bz\in H^N$ such that $d(\cdot, \bz)= d(\cdot, \rho \bv_+ )\in (H^N)'$ in \eqref{brokenproblem} it follows that $\bu$ is a solution of \eqref{vari2}. Let $\bu \in V^b$ be a solution of \eqref{vari2}. The space $\{\, (\bv, d(\cdot, \rho\bv_+) )~|~ \bv \in V^b\,\}$ is dense in $X \times (H^N)'$. Note that $\bv \to \la \rho \dot \bu, \bv\ra_b$, $\bv \to \int_0^T a(t;\bu(t),\bv(t))\, dt$ are continuous functionals on $X$ and $\bz \to d(\bu,\bz)$ is continuous on $H^N$. Using a density argument it follows that $\bu$ solves \eqref{brokenproblem} with $G=0$.  Hence, the unique solution  $\bu\in V^0$ of \eqref{defproblem} is also the unique solution of \eqref{vari2}. 
\end{proof}
\ \\[1ex]
The factor $\rho$ in the coupling term  $d(\cdot,\cdot)$ in  \eqref{vari2} is not essential. It is introduced to obtain a natural scaling, namely one that corresponds to the scaling with $\rho$ in the weak time derivative. Note that in \eqref{vari2} the initial condition $\bu(0)=0$ is treated in a weak sense (applies also to $\bu(0)=\bu^0 \neq 0$).
\\
\begin{remark} \rm 
In Theorem~\ref{thmequivalence} we (only) show that the problem \eqref{brokenproblem} with $G=0$ has a unique solution. For the variational problem \eqref{brokenproblem} a more general well-posedness result can be derived, namely that the bilinear form $B(\cdot,\cdot)$ defines a homeomorphism  $V^b \to Y'$, with norms
\begin{align*}
  \|\bu\|_{V^b}^2 & :=\|\bu\|_X^2 +\sum_{n=1}^N \|\rho \dot \bu_n\|_{X_n'}^2+\big(\sum_{n=0}^{N-1}\|[\bu]^n\|_{L^2}\big)^2, \\
 \|(\bv,\bz)\|_Y^2 & =\|\bv\|_X^2+\big( \max_{0\leq n \leq N-1}\|\bz^n\|_{L^2}\big)^2.
\end{align*}
Note that $(V^b,\|\cdot\|_{V^b})$ and $(Y,\|\cdot\|_Y)$ are Banach spaces.
Continuity of the bilinear form $B(\cdot,\cdot)$ on $V^b \times Y$ is easy to show. Furthermore, provided $a(t;\cdot,\cdot)$ satisfies \eqref{Cond1}-\eqref{Cond2}, it can be shown that the BNB infsup conditions are satisfied. We do not include a proof in this paper. 
Given these results one obtains that under the above  assumptions on $a(t;\cdot,\cdot)$, for any $F\in X$, $G \in (H^N)'$ the problem \eqref{brokenproblem} has a unique solution $\bu\in V^b$ and the estimate 
\[
\|\bu\|_{V^b} \le c (\|F\|_{X'}^2+\|G\|_{(H^N)'}^2)^\frac12, 
\]
holds with a constant $c$ depending only on $\gamma,\, \Gamma$ from \eqref{Cond1}-\eqref{Cond2}.
\end{remark}
\\
\begin{remark} \rm
From the results above it follows that if the assumptions as in Theorem~\ref{mainthm1} are satisfied, then the weak formulation \eqref{vari2} is a well-posed variational formulation of the original Stokes problem \eqref{NSA}. This variational formulation, in which the same trial and test space $V^b$ is used,  can be reformulated using a time stepping procedure. The unique solution $\bu \in V^b$ of \eqref{vari2} can be decomposed as $\bu=(\bu_1,\ldots,\bu_N)$, with $\bu_n \in V_n$,
and  the solution of \eqref{vari2} is also the unique solution of the problem: for $n=1,\ldots,N$, determine $\bu_n \in V_n$ such that
\begin{equation} \label{timestepA} \begin{split}
  & \la \rho \dot \bu_n,\bv_n \ra_n +(\rho(t_{n-1}) \bu_n(t_{n-1}),\bv_+^{n-1} )_{L^2} + \int_{I_n} a(t;\bu_n(t),\bv_n(t))\, dt  \\ & = (\rho(t_{n-1})\bu_{n-1}(t_{n-1}), \bv_+^{n-1})_{L^2} + F(\bv_n) \quad \text{for all}~~ \bv_n \in V_n.
\end{split} \end{equation}
This is the usual form of a discontinuous Galerkin method for parabolic PDEs, cf. \cite{Thomee}. If $\bu_n$ has sufficient smoothness, e.g. $\bu_n \in C^1(\bar Q_n)\cap V_n$, the weak material derivative reduces to the usual strong one: $\la \rho \dot \bu_n,\bv_n \ra_n= \int_{I_n} (\rho \dot \bu_n(t),\bv_n(t))_{L^2} \, dt$. 
This formulation is a reasonable starting point for a Galerkin finite element discretization in which the space $V^b$ is  replaced by a (space-time) finite element subspace. This, however, requires exactly  divergence free finite element functions. Recently, such divergence free finite element methods have been further developed  using techniques from finite element exterior calculus, e.g. \cite{Falk}. Most finite element methods, however,  treat the divergence constraint by means of a pressure Lagrange multiplier, see  \cite{John}. Therefore, in Section~\ref{sectpressure} we introduce a variant of the weak formulation \eqref{vari2} that involves the pressure Lagrange multiplier to satisfy the divergence free constraint. 
\end{remark}
\ \\

\section{Existence of a pressure Lagrange multiplier in $L^2(Q)$} \label{sectpressure}
In this section we reconsider the problem \eqref{defproblem}, for which a well-posedness result is given in Theorem~\ref{mainthm1}.  In the variational problem \eqref{defproblem}, both in the solution space $V^0$ and test space $X$ we restrict to functions $\bv$ which satisfy $\Div \bv =0$ on $\Omega$. In this section we derive a formulation in which we eliminate this condition from the trial and test space and instead introduce the pressure Lagrange multiplier for satisfying the divergence free constraint. For this one typically needs additional regularity properties of the solution $\bu $ of \eqref{defproblem}, cf. Section 6.2.1 in \cite{ErnGuermond}. The regularity property that we require in Theorem~\ref{thmwithp} below will be discussed in Remark~\ref{remregularity}. We use an analysis along the same lines as given for  a time dependent Stokes problem with constant coefficients (density and viscosity) in \cite{ErnGuermond}. 

We first introduce a space-time variant of de Rham's  theorem. Let $\nabla: \, L_0^2(\Omega) \to H^{-1}(\Omega)^d$ be the weak gradient. A standard application of de Rham's theorem, e.g., Corollary 2.4. in \cite{Reusken}, yields:
\begin{equation} \label{DeRham}
 \nabla : L^2_0(\Omega) \rightarrow \V^0 := \{f \in H^{-1}(\Omega)^d: f|_\V=0\} \quad \text{is an isomorphism},
\end{equation}
where $ L^2_0(\Omega) =\{\,p\in L^2(\Omega)~|~ \int_\Omega p=0\, \}$.
We define $\nabla_\otimes = {\rm id} \otimes \nabla:\, L^2(I;L_0^2(\Omega))=  L^2(I)\otimes L_0^2(\Omega) \rightarrow L^2(I;H^{-1}(\Omega)^d) = L^2(I) \otimes H^{-1}(\Omega)^d$ in the usual way, i.e., for $g \in  L^2(I;L_0^2(\Omega))$, $g(t)=\sum_{i=0}^\infty \alpha_i(t) \phi_i$ with $\alpha_i \in L^2(I)$, $\phi_i \in L_0^2(\Omega)$ we define
$(\nabla_\otimes g)(t):= \sum_{i=1}^\infty\alpha_i(t) \nabla \phi_i \in H^{-1}(\Omega)^d$. From \eqref{DeRham} it follows that
\begin{equation} \label{DeRham1}
 \nabla_\otimes : L^2(I;L^2_0(\Omega)) \rightarrow L^2(I;\V^0) \quad \text{is an isomorphism}.
\end{equation}
Furthermore, for $ g \in  L^2(I;L^2_0(\Omega)), \, \bv \in L^2(I;H_0^1(\Omega)^d)$ we have
\begin{equation}\label{partint2}
 \la \nabla_\otimes g, \bv\ra = \int_0^T \la \nabla_\otimes g(t), \bv(t)\ra_{H^{-1}(\Omega)} \,dt= - \int_0^T (g(t),\Div \bv(t))_{L^2(\Omega)} \, dt.
\end{equation}
We introduce notation for spaces. Recall $U=\{\, \bv \in X~|~\ddt{\bv} \in L^2(I;L^2(\Omega)^d)\, \}$, cf.~\eqref{defU}. We define
\[ \tilde U:=\{\, \bv \in L^2(I;H_0^1(\Omega)^d)~|~\ddt{\bv} \in L^2(I;L^2(\Omega)^d)\, \}. 
                                            \]
                                            Hence, $U=\{\, \bv \in \tilde U~|~\Div \bv(t)=0\quad \text{a.e. for}~t \in I\,\}$.
Clearly, opposite to $U$ and $V=\overline{U}^{\|\cdot\|_W}$, the space $\tilde U$ does not involve the divergence free constraint. Below we use this space as trial space and $L^2(I;H_0^1(\Omega)^d)$ (instead of $X$) as test space for the velocity. In order to do this we assume that the bilinear from $a(t;\cdot,\cdot)$ is not only defined on $\cV\times \cV$ but on $\cV\times H^1_0(\Omega)^d$ and satisfies 
 \begin{align} 
  a(t;\bv, \tilde\bv) & \leq \tilde \Gamma |\bv |_{1,\Omega} |\tilde \bv |_{1,\Omega}  \quad \text{for all}~~(\bv, \tilde \bv) \in \cV \times H^1_0(\Omega)^d, ~t \in I\label{condBnew}
 \end{align}
 for a positive constant  $\tilde \Gamma$, independent of $\bv,\tilde \bv$.
\begin{theorem} \label{thmwithp}
Let the assumptions of Theorem~\ref{mainthm1}  hold and assume that, for given $F \in L^2(I;H^{-1}(\Omega)^d) \subset X'$, the unique solution $\bu$ of \eqref{defproblem} has smoothness $\ddt{\bu}
 \in L^2(I;L^2(\Omega)^d)$, i.e., $\bu \in U$. Assume that the bilinear form $a(t;\cdot,\cdot)$ is defined on $\cV\times H^1_0(\Omega)^d$ and satisfies \eqref{condBnew}. Consider the following problem: determine $\bu \in \tilde U$, $p \in L^2(I;L_0^2(\Omega))$ such that
\begin{align}
 (\rho \dot \bu, \bv)_{L^2} +\int_0^T a(t;\bu(t),\bv(t))\, d t - \int_0^T (p(t),\Div \bv(t))_{L^2(\Omega)} \, dt & =F(\bv), \label{E1}\\
 \int_0^T (q(t),\Div \bu(t))_{L^2(\Omega)} \, dt &=0, \label{E2}
\end{align}
for all $\bv \in  L^2(I;H_0^1(\Omega)^d)$, $q \in L^2(I;L^2_0(\Omega))$. This problem has a unique solution $(\bu,p)$ and $\bu$ equals the unique solution of \eqref{defproblem}.
\end{theorem}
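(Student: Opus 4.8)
The plan is to adapt the classical pressure-recovery argument based on de Rham's theorem, as in \cite[Section~6.2.1]{ErnGuermond}, now using the space-time version \eqref{DeRham1}. \textbf{Existence.} First I would take $\bu\in V^0$ to be the unique solution of \eqref{defproblem}; by hypothesis $\bu\in U$, so $\Div\bu(t)=0$ for a.e.\ $t\in I$ (whence \eqref{E2} holds for every $q$) and, since $\partial_t\bu\in L^2(Q)^d$, $\bw\in L^\infty(Q)^d$ and $\nabla\bu\in L^2(Q)^{d\times d}$, the material derivative $\rho\dot\bu=\rho(\partial_t\bu+\bw\cdot\nabla\bu)$ is an $L^2(Q)^d$-function. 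A preliminary step is to check that this function represents the distributional functional $\rho\dot\bu\in X'$ from \eqref{defproblem}, i.e.\ $\la\rho\dot\bu,\bv\ra=(\rho\dot\bu,\bv)_{L^2}$ for all $\bv\in X$, which follows from \eqref{id2} and density of smooth functions in $U$. Then the residual
\[
 \ell(\bv):=F(\bv)-(\rho\dot\bu,\bv)_{L^2}-\int_0^T a(t;\bu(t),\bv(t))\,dt
\]
is, by \eqref{condBnew} and $F\in L^2(I;H^{-1}(\Omega)^d)$, a bounded functional on $L^2(I;H_0^1(\Omega)^d)$, i.e.\ an element of $L^2(I;H^{-1}(\Omega)^d)$, and \eqref{defproblem} says precisely that $\ell$ vanishes on $X=L^2(I;\cV)$. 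Testing with $\bv(t)=g(t)\bphi$, $g\in L^2(I)$ and $\bphi$ running through a countable dense subset of $\cV$, then shows $\ell(t)\in\V^0$ for a.e.\ $t$, i.e.\ $\ell\in L^2(I;\V^0)$. By the isomorphism \eqref{DeRham1} there is a unique $p\in L^2(I;L^2_0(\Omega))$ with $\nabla_\otimes p=\ell$, and \eqref{partint2} shows this identity is exactly \eqref{E1}; hence $(\bu,p)$ solves \eqref{E1}--\eqref{E2}.

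\textbf{Uniqueness and identification.} Next I would take $(\bu,p)$ to be any solution of \eqref{E1}--\eqref{E2}. From \eqref{E2}, $\Div\bu(t)=0$ a.e., so $\bu\in U$; restricting the test functions in \eqref{E1} to $\bv\in X\subset L^2(I;H_0^1(\Omega)^d)$ kills the pressure term by \eqref{partint2}, and, using the identification of the material-derivative term above, $\bu$ solves \eqref{defproblem} (the zero initial condition being understood as part of the solution space, as in $V^0$ and $U_0$). By Theorem~\ref{mainthm1} the velocity component is therefore uniquely determined and equals the solution of \eqref{defproblem}; this is the last assertion of the theorem. If $(\bu,p_1)$ and $(\bu,p_2)$ are two solutions with this common velocity, subtracting \eqref{E1} gives $\la\nabla_\otimes(p_1-p_2),\bv\ra=0$ for all $\bv\in L^2(I;H_0^1(\Omega)^d)$, and injectivity of $\nabla_\otimes$ in \eqref{DeRham1} yields $p_1=p_2$.

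\textbf{Main obstacle.} The de Rham step is routine once \eqref{DeRham1} and \eqref{partint2} are available; the delicate points will be (i) the identification $\la\rho\dot\bu,\bv\ra=(\rho\dot\bu,\bv)_{L^2}$ for $\bu\in U$ and $\bv\in X$, needed so that the material-derivative term in \eqref{defproblem} and in \eqref{E1} are literally the same object, handled via \eqref{id2} and density of smooth functions in $U$ (available because $U$, unlike $W$, has tensor-product structure); and (ii) the measurability/localization argument upgrading ``$\ell$ annihilates $L^2(I;\cV)$'' to ``$\ell(t)\in\V^0$ for a.e.\ $t$'', which is what turns the pointwise-in-time statement \eqref{DeRham} into the space-time statement \eqref{DeRham1}. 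The hypothesis $\bu\in U$ is exactly what makes the material-derivative term an honest $L^2$-function and is hence indispensable, cf.\ Remark~\ref{remregularity}.
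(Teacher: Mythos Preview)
Your proposal is correct and follows essentially the same route as the paper's proof: identify $\la\rho\dot\bu,\bv\ra$ with $(\rho\dot\bu,\bv)_{L^2}$ using the regularity assumption $\bu\in U$, define the residual $\ell$, observe $\ell\in L^2(I;\V^0)$, invoke the space-time de Rham isomorphism \eqref{DeRham1} together with \eqref{partint2} to produce $p$, and then argue uniqueness by restricting \eqref{E1} to divergence-free test functions. Your write-up is in fact slightly more explicit than the paper's in spelling out why $\ell(t)\in\V^0$ pointwise and in flagging the initial-condition issue; the paper tacitly treats the zero initial condition as part of the solution space, exactly as you do parenthetically.
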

\begin{proof}
Let $\bu$ be the unique solution of  \eqref{defproblem}, which by assumption has smoothness $\ddt{\bu} \in L^2(I;L^2(\Omega)^d)$. Hence, $\la \rho \dot \bu,\bv \ra= (\rho \dot \bu, \bv)_{L^2}$ for all $\bv \in X$ and \eqref{E1} holds for all $\bv \in X$. Define
\[
 l(\bv):= F(\bv)-  (\rho \dot \bu, \bv)_{L^2}-\int_0^T a(t;\bu(t),\bv(t))\, d t, \quad \bv \in L^2(I;H_0^1(\Omega)^d).
\]
Then $l \in L^2(I;\V^0)$. From \eqref{DeRham1} it follows that there exists a unique $p \in L^2(I;L_0^2(\Omega))$ such that
\[
  \la \nabla_\otimes p,\bv\ra = l(\bv) \quad \text{for all}~ \bv \in L^2(I;H_0^1(\Omega)^d).
\]
Combining this with \eqref{partint2} we conclude that $(\bu,p)$ satisfies \eqref{E1} for all $\bv \in L^2(I;H_0^1(\Omega)^d)$. Furthermore, $\bu$ trivially satisfies \eqref{E2}, due to $\Div \bu(t)=0$. Hence,  the unique solution $\bu$ of  \eqref{defproblem} and the corresponding unique $p \in L^2(I;L_0^2(\Omega))$ solve \eqref{E1}-\eqref{E2} for all $\bv \in  L^2(I;H_0^1(\Omega)^d)$, $q \in L^2(I;L^2_0(\Omega))$.

We now consider the other direction. Let $(\bu,p)\in \tilde U \times L^2(I;L_0^2(\Omega))$ solve \eqref{E1}-\eqref{E2} for all $\bv \in  L^2(I;H_0^1(\Omega)^d)$, $q \in L^2(I;L^2_0(\Omega))$. From \eqref{E2} it then follows that $\Div \bu(t)=0$ a.e. for $t \in I$ and a.e. on $\Omega$. Hence, $\bu \in U$ holds. Taking $\bv \in X$ in \eqref{E1} it follows that $\bu$ must be equal to the unique solution of \eqref{defproblem}.
\end{proof}
\ \\[1ex]
Since the unique solution of \eqref{defproblem} is also the unique solution of  \eqref{vari2} one can derive  the following \emph{time-discontinuous} variant of the space-time saddle point problem \eqref{E1}-\eqref{E2}. Define $\tilde U_n:=\tilde U_{|I_n}$, $\tilde U^b:= \oplus_{n=1}^N \tilde U_n$. The unique solution of \eqref{E1}-\eqref{E2} is also the unique solution of the following problem: determine $(\bu, p) \in \tilde U^b \times L^2(I;L_0^2(\Omega))$ such that
\begin{align}
 & ( \rho \dot \bu, \bv)_{L^2} + d(\bu,\rho \bv_+)+ \int_0^T a(t;\bu(t),\bv(t))\, dt - \int_0^T (p(t),\Div \bv(t))_{L^2(\Omega)} \, dt \nonumber \\ &   =F(\bv) \quad \text{for all}~\bv \in \tilde U^b, \label{vari2a}\\
 & \int_0^T (q(t),\Div \bu(t))_{L^2(\Omega)} \, dt =0 \quad \text{for all}~q \in L^2(I;L_0^2(\Omega)).\label{vari2b}
\end{align}
This allows a time stepping procedure, similar to \eqref{timestepA}. The formulation \eqref{vari2a}-\eqref{vari2b}, which allows a time-stepping procedure and treats the divergence free constraint by means of the pressure Lagrange multiplier, is a natural starting point for a \emph{Galerkin space-time finite element discretization}, which will be treated in the next section.
\\

\begin{remark} \label{remregularity}  \rm 
We briefly comment on  the regularity assumption  $\ddt{\bu} \in L^2(I;L^2(\Omega)^d)$  for the solution $\bu$ of \eqref{defproblem}, which is used in Theorem~\ref{thmwithp}.  One can derive (reasonable)  regularity conditions on the right hand-side functional $F \in X'$ and on the  given flow field $\bw$ that are sufficient  for the solution $\bu$ of \eqref{defproblem} to have the required smoothness $\ddt{\bu} \in L^2(I;L^2(\Omega)^d)$. For the derivation of such conditions one might consider to substitute $\bv =\dot \bu$ (or a smooth approximation of it) in \eqref{defproblem} and then use properties of $a(t;\cdot,\cdot)$ and smoothness assumptions on $F$ to derive a suitable bound for $\la \rho \dot \bu, \dot \bu\ra$ from which then $\dot \bu   \in L^2(I;L^2(\Omega)^d)$ can be concluded. This approach, however,  does not work, because we need test functions $\bv$ which are divergence free. The material derivative $\dot \bu$ of a divergence free function $\bu \in V$, however, is in general not 
divergence 
free. To circumvent this problem one can use a suitable Piola transformation or a Hanzawa transform as used  in \cite[Section 1.3]{Pruss2016}. We outline a result that can be derived using the Piola transformation. Details of the analysis are given in Appendix~\ref{AppendixReg}. 

Let $a(t;\cdot,\cdot)$ be as in Theorem \ref{mainthm1}, hence it satisfies \eqref{Cond1}-\eqref{Cond2}.
We furthermore assume  that this bilinear form is defined on $\cV\times H^1_0(\Omega)^d$, satisfies \eqref{condBnew} and 
 \begin{align} \label{condA}
 \int_0^T a(t;\bv(t),\dot{\bv}(t)) & \geq - M \|\bv \|_{X}^2 \quad \text{for all}~~\bv \in U_0\cap H^2(Q)^d,
 \end{align}
 for a positive constant  $M$, independent of $\bv$ (recall that $U_0=\{\, \bu \in U~|~\bu(0)=0\,\}$).  Then the unique solution $\bu\in V^0$ from Theorem \ref{mainthm1} has the (desired) smoothness property $\ddt{\bu} \in L^2(I;L^2(\Omega)^d)$, if $F'\in L^2(I;H)'$. This result can be derived (using a Piola transformation) as follows.

 We take a flow field with $\bw|_{\partial \Omega} = 0$.
We assume that  $\bw\in C^1(\bar I; C^2(\bar \Omega)^d)$ and consider  the corresponding Lagrange flow $\Phi: \Omega \times I \to \Omega $ as defined in \eqref{flowPhi},  
which has smoothness $\Phi \in C^2(\bar Q)^d$ (see Remark \ref{rmSurfaceForce}).
The Lagrangian flow $\Phi$ defines a Piola transform $P_\cF$ of (time dependent) vector fields on $\Omega$. Both $P_\cF$ and its inverse $P_\cF^{-1}$ map (by construction of the Piola transform) divergence free functions to divergence free functions. Since $P_\cF$ is based on $\Phi$, it maps the  divergence free velocity field $\bu$ in Eulerian coordinates to a divergence free velocity field $P_\cF \bu$ in material coordinates. This allows us to define the following variant of the material time derivative $\bu' := P_\cF^{-1} \ddt{P_\cF \bu}$ which has the property that $\bu'$ is divergence free. One can verify that 
\begin{equation}\label{Pbounds}
\|\bu' - \dot \bu \|_X \leq C \| \bu \|_X, \text{ and }\|\bu' - \dot \bu \|_{L^2} \leq C \| \bu \|_{L^2}, 
\end{equation}
 for some $C$ which is independent of $\bu$. Formally using $\bu'$ as a test function in \eqref{defproblem} we obtain 
\begin{eqnarray*}
F(\bu')& =& \langle \rho \dot \bu , \bu' \rangle + \int_0^T a(\bu(t),\bu'(t))dt
\\& =& \| \sqrt{ \rho}  \dot \bu\|_{L^2}^2 +\langle \rho \dot \bu , \bu' - \dot \bu \rangle + \int_0^T a(\bu(t),\dot \bu(t))dt
+ \int_0^T a(\bu(t),\bu'(t) - \dot \bu(t))dt.
\end{eqnarray*}
From this one obtains, using \eqref{condA},\eqref{condBnew} and \eqref{Pbounds}, the estimate
$ \|  \dot \bu\|_{L^2}^2\leq c \|F\|_{L^2(I;H)'}^2 +c\|  \bu\|_{X}^2 $, which yields the desired smoothness result for $\bu$.
 In order to justify the formal use of $\bu'$ as a test function in \eqref{defproblem} one can  construct a suitable sequence of sufficiently smooth functions that converge to $\bu$ (cf. Appendix~\ref{AppendixReg}).

It is not difficult to show that the conditions in \eqref{condBnew} and \eqref{condA} are satisfied for the bilinear form $a(t;\bu,\bv)= \int_\Omega \mu D(\bu):D(\bv)\, dx$. The latter condition is verified using $\dot \mu =0$, $D(\dot \bv) = \dot{\overbrace{D(\bv)}} + \nabla \bw \nabla \bv +( \nabla \bw \nabla \bv)^T$ and (a variant of) the integration by parts identity \eqref{partint}.
\end{remark}

\section{An unfitted space-time finite element method} \label{sectUnfittedFEM} 
In this section we introduce a Galerkin discretization scheme for \eqref{NSA}-\eqref{coupl2A}. In this scheme we use a standard space-time finite element space for the velocity approximation and a space-time cut finite element space for approximation of the pressure. The latter space is the same as the one used for a parabolic problem with a moving discontinuity in \cite{LRSINUM2013}.  A similar cut finite element spaces is used for stationary Stokes interface problems in  \cite{HansboStokes}. We explain the method and then present results of numerical experiments with this method.
\subsection{Discretization Scheme}
We wish to determine both the velocity and the pressure in \eqref{NSA}-\eqref{coupl2A}. We use the weak formulation \eqref{vari2a}-\eqref{vari2b} to formulate a space-time finite element discretization. We therefore take a pair of finite element spaces $U_h^b \subset \tilde U^b$, $Q_h \subset L^2(I;L_0^2(\Omega))$. These spaces are derived from  standard space-time tensor finite element spaces. 
For this we  assume a family of shape regular simplicial triangulations $\{\T_h\}_{h >0}$ of the (polygonal) spatial domain $\Omega$. The tensor product  mesh on the space-time domain is then given by 
\[
\M_{h,N} = \{ I_n \times \bT ~ |~ n=1,\dots, N, \bT \in \T_h\}.
\]
Standard space-time finite element spaces are:
\begin{eqnarray*}
Q_{h} &=& \{p \in L^2(I;H^1(\Omega)\cap L^2_0(\Omega)) ~|~ p|_{I_n\times \bT} \in \PP_q(I_n;\PP_{r-1}(\bT))~~\forall~ I_n\times \bT \in M_{h,N} \},
\\
U_{h}^b &=& \{\bu \in \tilde U^b~|~ \bu|_{I_n\times \bT} \in  \PP_q(I_n;\PP_r(\bT)^d)~~\forall~ I_n\times \bT \in M_{h,N},\},
\end{eqnarray*}
with integers $q \geq 0$, $r \geq 2$.  In both finite element spaces we use the same polynomial degree $q$ with respect to time. On each time-slab $ I_n\times \Omega $ the finite element functions in both spaces are  continuous on the entire slab. Note that in the space variable we have the $\PP_{r-1}-\PP_r$ Hood-Taylor pair. 
Clearly, using these spaces we can not expect an accurate  approximation of the jump in pressure. The large approximation errors in the pressure will induce large spurious velocities.  This can be remedied by using a suitable cut finite element variant of the pressure finite element space. Such spaces are well-known (in particular for stationary interface problems) in the literature and closely related to the extended finite element method (XFEM), cf.~\cite{LRSINUM2013,Burman2015}. This leads to the following definition of an extension of $Q_h$:
\[
 Q^X_h:= \mathcal{R}_1  Q_h \oplus  \mathcal{R}_2  Q_h \subset L^2(I;L_0^2(\Omega)),
\]
where $ \mathcal{R}_i:q\mapsto q \chi_{Q_i}$ is the restriction operator to the subdomain $Q_i:=\{ (x,t) \in  \Omega\times I_n~|~x \in \Omega_i(t)\,\}$, $i=1,2$. A similar extension of the velocity space $U^b_h$ could be considered. This, however, yields additional difficulties, because the continuity of the velocity across the interface has to be enforced weakly by a Nitsche method as in e.g. \cite{LRSINUM2013,Burman2015}. We will not do this here and leave  this topic for future research (cf. Section~\ref{sectoutlook}).   

A Galerkin discretization of the variational formulation \eqref{vari2a}-\eqref{vari2b}  
leads to  the following problem: determine $\bu_h \in U_h^b$, $p_h\in  Q_h^X$  such that
\begin{align}
(\rho \dot \bu_h, \bv_h)_{L^2}+d(\bu_h,\rho (\bv_h)_+) +\int_0^T a(t;\bu_h(t),\bv_h(t))\, d t
\\ - \int_0^T (p_h(t),\Div \bv_h(t))_{L^2(\Omega)} \, dt 
& =F(\bv_h), \label{E1h}\\
 \int_0^T (q_h(t),\Div \bu_h(t))_{L^2(\Omega)} \, dt &=0, \label{E2h}
\end{align}
for all $\bv_h \in  U_h^b$, $q_h \in Q_h^X$. 
This global in time problem 
can be solved sequentially by solving for each $n=1,\dots,N$, cf. \eqref{timestepA}: determine $\bu_{n,h} \in U_{h}^b|_{I_n}$, $p_{h,n}\in  (Q_h^X)|_{I_n}$ such that
\begin{equation} \label{timestepAh}
 \begin{split}
  & ( \rho \dot \bu_{h,n},\bv_{h,n} )_{L^2(Q_n)} +(\rho(t_{n-1}) \bu_{h,n}(t_{n-1}),(\bv_{h,n})_+^{n-1} )_{L^2(\Omega)} + \int_{I_n} a(t;\bu_{h,n}(t),\bv_{h,n}(t))\, dt  
 \\ & - \int_0^T (p_{h,n}(t),\Div \bv_{h,n}(t))_{L^2(\Omega)} \, dt   = (\rho(t_{n-1})\bu_{h,n-1}(t_{n-1}), (\bv_{h,n})_+^{n-1})_{L^2} + F(\bv_{h,n})
\\ & \int_0^T (q_{n,h}(t),\Div \bu_{n,h}(t))_{L^2(\Omega)} \, dt =0, 
\end{split} 
\end{equation}
for all $\bv_{h,n} \in U^b_h|_{I_n}$, $q_{n,h} \in Q_h|_{I_n}$, where $\bu_{n,h}=\bu_h|_{I_n}, p_{h,n}=q_h|_{I_n}$. 

Due to the fact that the triangulation is not fitted to the interface, special space-time quadrature is needed on the prisms that are cut by the interface $\mathcal{S}$. Moreover, the geometry of these cut elements has to be (approximately) determined. One typically uses a piecewise polygonal approximation of $\mathcal{S}$ for which the cut elements and corresponding quadrature rules can then be determined efficiently. Such an approach for the space-time setting is treated in \cite{Lehrenfeld2015}. These methods are used in the numerical experiments below.
\subsection{Numerical experiments}
We consider a problem with a prescribed smooth moving interface.
We take the space-time domain $I\times \Omega=(0,1)\times (-1,1)\times (-1,1) \times (-\frac{3}{4},\frac{7}{4} )$. We take a sphere which moves linearly in time, characterized as the zero level of the level set function
\[
\phi = x^2+y^2+(z-t)^2 - 1/2.
\]
The density and viscosity coefficients $\rho$ and $\mu$ are taken as follows:
\[
\rho  = \begin{cases}
1 & \phi > 0\\
10 & \phi < 0
\end{cases}, \quad
\mu  = \begin{cases}
1 & \phi > 0\\
25 & \phi < 0
\end{cases}, 
\]
and for the surface tension coefficient we take the value $\tau=2$. 
 The pressure solution is chosen to be smooth in the subdomains $Q_i$ and has a jump across $\mathcal{S}$
\[
p = \begin{cases}
0 & \phi > 0\\
\frac{96}{5}\sin(2t)xy +2\sqrt{2} & \phi < 0
\end{cases}.
\]
The velocty solution $\bu$ is chosen to be smooth in the entire domain:
\[
\bu= \sin(2t)  \begin{pmatrix}
\frac{1}{5} \, {\left(x^{2} + 5 \, y^{2} - 10 \, t z + 5 \, z^{2}\right)}  y
\\
\frac{1}{5} \, {\left(10 \, t^{2} + 5 \, x^{2} + y^{2} - 10 \, t z + 5 \, z^{2} - 8\right)} x
\\
\frac{4}{5} \, {\left(t - z\right)}  x y
\end{pmatrix}.
\]
In this first experiment we use a smooth velocity field because in the unfitted space-time finite element method introduced above  for the velocity variable we restrict to the standard finite element spaces (no CutFEM). 
We drop the advection term in \eqref{vari2a}-\eqref{vari2b} and take the bilinear forms as in the original problem. This corresponds to taking 
\[
a(t,\bu,\bv) = (\mu(t) D(\bu) : D(\bv) )_{L^2(\Omega)} - (\rho(t) \bw(t)\cdot\nabla \bu, \bv)_{L^2(\Omega)}
\]
in \eqref{E1h}-\eqref{E2h}. The obtained differential equation does no longer depend explicitly on $\bw$. The resulting PDE is defined by the position of the interface, which is given by $\phi(x,y,z,t)$.
\begin{remark} \rm 
Note that this bilinear form is not elliptic, however, it does satisfy 
\[
a(t,\bu,\bu) \geq \gamma |\bu|_{1}^2 - k_0 \|\bu \|_{L^2}^2
\]
for some $\gamma,k_0>0$ which depend on $\mu,\|\bw\|_{L^\infty}$. The standard transformation $\bu(t) \mapsto \exp(-\lambda_0 t) \bu(t)$, cf. \cite[p. 397]{Wloka} can be used in order to apply Theorem \ref{mainthm1}. 
\end{remark}
\ \\

The right hand-side $\mathbf{g}$ is adjusted to the prescribed solution and the surface tension force. We divide the interval $I$ into $N$ segments of length $k=\frac{1}{N}$. For the discretization in space we construct a tetrahedral triangulation of $\Omega$. For this  the domain $\Omega$ is divided into cubes with side length $h:=\frac{1}{N_S}$ and each of the cubes is divided into six tetrahedra. We use the finite element spaces $U_h,Q_h,Q_h^X$ which were introduced in the previous section. For the implementation of the surface tension forces and the pressure space $Q_h^X$ one needs an approximation of the interface. For this the level set function is interpolated by a piecewise bilinear function is space-time and the zero level of this interpolation is used as approximation for the interface. Further details concerning the space-time quadrature are given in \cite{Lehrenfeld2015}. Clearly this interface approximation limits the accuracy to second order. Therefore, in the finite element spaces we take 
$q=1$ (linears in time) and $r=2$ (linears for pressure, quadratics for velocity). 


Let $\bu_h,p_h$ be the solution of \eqref{E1h}-\eqref{E2h} in the spaces $U_h^b$ and $Q_h$ and  $\bu_h^X,p_h^X$  the solution of \eqref{E1h}-\eqref{E2h} in the spaces $U_h^b$ and $Q_h^X$. We determine errors in the $L^2\otimes H^1$ and the $L^2\otimes L^2$ norm. In Table \ref{TableH1L2} we show the error $\|\bu - \bu_h\|_{L^2\otimes H^1}$.

\begin{table}[ht!]
\begin{tabular}{r|lllllll}
$N_S$\textbackslash $N$ &4&8&16&32&64&128&$EOC_S$\\
\hline
4&0.32167&0.25902&0.25045&0.24888&0.24852&0.24844&\\
8&0.21797&0.13582&0.12703&0.12597&0.12577&0.12573&0.98264\\
16&0.18799&0.09090&0.08163&0.08095&0.08088&0.08086&0.63669\\
32&0.17626&0.06511&0.05425&0.05450&0.05466&0.05469&0.56423\\
$EOC_T$&&1.43670&0.26328&-0.00674&-0.00421&-0.00068&
\end{tabular}
\caption{\label{TableH1L2}
Error $\|\bu - \bu_h\|_{L^2\otimes H^1}$ for finite element spaces $U_h^b$ and $Q_h$.
The estimated temporal (spatial) order of convergence $EOC_T$ ($EOC_S$) is computed using the last row (column).
}
\end{table}

As expected, we observe poor convergence with a rate that is much lower than second order. In Table \ref{TableH1L2X} we see the error $\|\bu - \bu_h^X\|_{L^2\otimes H^1}$.
 \begin{table}[ht!]
\begin{tabular}{r|lllllll}
$N_S$\textbackslash $N$ &4&8&16&32&64&128&$EOC_S$\\
\hline
4&0.29649&0.20900&0.19753&0.19552&0.19510&0.19507&\\
8&0.18572&0.06802&0.04699&0.04390&0.04332&0.04318&2.17546\\
16&0.17339&0.04718&0.01736&0.01154&0.01064&0.01047&2.04410\\
32&0.17604&0.04423&0.01326&0.00525&0.00306&0.00267&1.97412\\
$EOC_T$&&1.99289&1.73737&1.33761&0.77615&0.20161&
\end{tabular}
\caption{\label{TableH1L2X}
Error  $\|\bu - \bu_h^X\|_{L^2\otimes H^1}$ for finite element spaces $U_h^b$ and $Q_h^X$.
The estimated temporal (spatial) order of convergence $EOC_T$ ($EOC_S$) is computed using the last row (column).
}
\end{table}

The error is roughly of optimal order $\cO(k^2+h^2)$. Note that the spatial error dominates after a few temporal refinements. We see that in absolute values the error significantly improves if we use the extended finite element space $Q_h^X$ for the pressure. 
In Tables \ref{TablePL2L2} and  \ref{TablePL2L2X} we give $L^2\otimes L^2$ norms of the pressure errors. 
\begin{table}[ht!]
\begin{tabular}{r|lllllll}
$N_S$\textbackslash $N$ &4&8&16&32&64&128&$EOC_S$\\
\hline
4&2.30681&2.22456&2.20904&2.20538&2.20451&2.20431&\\
8&1.58330&1.57570&1.57848&1.57931&1.57950&1.57956&0.48080\\
16&1.18787&1.15924&1.17049&1.17305&1.17346&1.17348&0.42873\\
32&0.93573&0.83261&0.83562&0.84323&0.84483&0.84507&0.47365\\
$EOC_T$&&0.16845&-0.00521&-0.01308&-0.00273&-0.00041&
\end{tabular}
\caption{\label{TablePL2L2}
Error $\|p - p_h\|_{L^2\otimes L^2}$ for  finite element spaces $U_h^b$ and $Q_h$.
The estimated temporal (spatial) order of convergence $EOC_T$ ($EOC_S$) is computed using the last row (column).
}
\end{table}

\begin{table}[ht!]
\begin{tabular}{r|lllllll}
$N_S$\textbackslash $N$ &4&8&16&32&64&128&$EOC_S$\\
\hline
4&1.81460&1.11767&0.99726&0.96497&0.97083&0.98616&\\
8&0.60974&0.26134&0.17501&0.16322&0.16244&0.16315&2.59559\\
16&0.72751&0.27060&0.10682&0.04883&0.04331&0.04355&1.90560\\
32&1.79975&0.40515&0.17825&0.07309&0.02646&0.01895&1.20034\\
$EOC_T$&&2.15127&1.18458&1.28605&1.46568&0.48186&
\end{tabular}
\caption{\label{TablePL2L2X}
Error $\|p - p_h^X\|_{L^2\otimes L^2}$ for  finite element spaces $U_h^b$ and $Q_h^X$.
The estimated temporal (spatial) order of convergence $EOC_T$ ($EOC_S$) is computed using the last row (column).
}
\end{table}
 We observe that for the space $Q_h$ the error $\|p-p_h\|_{L^2}$ has a poor spatial order $\cO(h^{\frac{1}{2}})$. This is known from the stationary case, cf. \cite[Section 7.10]{Reusken}. This spatial error dominates and we therefore see no temporal convergence order. If we use the space $Q_h^X$, then we see a significant improvement (Table \ref{TablePL2L2X}), however we do not see an optimal convergence rate $\cO(k^2+h^2)$. It is unclear what the temporal convergence rate is. The observed spatial convergence rate is consistent with results from stationary simulations, e.g. \cite[Table 7.17]{Reusken}. The spatial convergence order $\cO(h^{1.5})$ is probably caused by a dominating error in the approximation of the  surface tension. In  \cite{Grande2015} it is shown that the surface tension approximation method that we use induces a discretization error of order $\cO(h^{1.5})$. Additionally, since the finite element pair $U_h^b$-$Q_h^X$
 is not (necessarily) LBB-stable, a stabilization term would be beneficial, see
  \cite{HansboStokes,Kirchhart2016}. Such methods are a topic of ongoing research.

We consider a second experiment with a non-smooth velocity. 
We take the same $Q_i$, $\cS$ and $\phi$ as in the previous example. 
The density and viscosity coefficients are taken as follows:
\[
\rho  = \begin{cases}
1 & \phi > 0\\
5 & \phi < 0
\end{cases}, \quad
\mu  = \begin{cases}
1 & \phi > 0\\
2 & \phi < 0
\end{cases}, 
\]
and for the surface tension coefficient we take the value $\tau=2$. 
 The pressure solution and the velocity solution is chosen to be smooth in the subdomains $Q_i$ 
\[
p = \begin{cases}
0 & \phi > 0\\
2\sqrt{2} & \phi < 0,
\end{cases}
\quad
\bu= \sin(2t)  \begin{pmatrix}
-y
\\
x
\\
0
\end{pmatrix}\cdot
\begin{cases}
\frac{1}{2} \, e^{-{\left(t - z\right)}^{2} - x^{2} - y^{2}} & \phi > 0\\
-\frac{1}{2} \, e^{-\frac{1}{2}} + e^{-{\left(t - z\right)}^{2} - x^{2} - y^{2}}& \phi < 0.
\end{cases}
\]
\begin{table}[ht!]
\begin{tabular}{r|lllllll}
$N_S$\textbackslash $N$ &4&8&16&32&64&128&$EOC_S$\\
\hline
4&0.13430&0.04973&0.04391&0.04261&0.04213&0.04199&\\
8&0.07766&0.02683&0.01856&0.01771&0.01757&0.01758&1.25641\\
16&0.08380&0.02728&0.01185&0.00828&0.00787&0.00784&1.16538\\
32&0.11604&0.03314&0.01404&0.00696&0.00492&0.00473&0.72833\\
$EOC_T$&&1.80811&1.23931&1.01213&0.50098&0.05590&
\end{tabular}
\caption{\label{ConvReview}
Error  $\|\bu - \bu_h^X\|_{L^2\otimes H^1}$ for finite element spaces $U_h^b$ and $Q_h^X$.
The estimated temporal (spatial) order of convergence $EOC_T$ ($EOC_S$) is computed using the last row (column).
}
\end{table}

In Table \ref{ConvReview} we see that the space-time convergence order for the velocity is initially between 1 and $1.5$ and eventually it degrades towards the asymptotic order $0.5$. Similar behaviour has been observed in the stationary case, see \cite{Kirchhart2016,Fries2010}.
As expected,  the method would benefit from the use of a CutFEM space for the velocity unknown, see \cite{LRSINUM2013,Burman2015}. This a topic  for future research.

\section{Summary and outlook} \label{sectoutlook}
We have studied a time dependent Stokes problem that is motivated by  a standard sharp interface model for the fluid dynamics of  two-phase flows. This  Stokes interface problem has discontinuous  density and viscosity coefficients and a pressure solution that is discontinuous across the evolving interface. We consider this strongly simplified two-phase Stokes equation to be a good model problem for the development and analysis of finite element discretization methods for two-phase flow problems. Well-posedness results for this Stokes interface problem are not known in the literature.  We introduce  (natural) space-time variational formulations in a Euclidean setting and derive well-posedness results for these formulations. Different variants are considered, namely one with suitable spaces of divergence free functions, a discrete in time version of it, and variants in which the divergence free constraint in the solution space is treated by a pressure Lagrange multiplier. Although techniques known from the 
literature are used, the approach applied in the analysis of well-posedness is significantly different from known analyses of well-posedness of time-dependent (Navier-)Stokes problems. The reason for this is explained in Remark~\ref{Remdifficulty}. The discrete-in-time variational formulation involving the pressure variable for the divergence free constraint is a very natural starting point for a space-time finite element discretization. Such a  method, based on a standard DG time-stepping scheme and a special space-time extended finite element space (XFEM) for the pressure, is explained and  results of numerical experiments with this method are presented.  

In forthcoming work the following topics could be addressed.
A modified  analysis of well-posedness  may be possible which needs  weaker regularity requirements on $\bw$. This then leads to a smaller gap between regularity of $\bw$ and the regularity of the solution $\bu$. This is especially challenging for the regularity of $\bu$ and $\bw$ which is required to solve the full problem involving the pressure unknown.
The finite element  method can (and should) be combined with further methods which are already used in a stationary setting. For example, a stabilization term can be introduced for the pressure unknown to improve the conditioning of the stiffness matrix. Furthermore,  a Nitsche-XFEM method can be developed to treat problems in which the velocity is nonsmooth across the interface (which is typically the case). Another topic which we consider to be highly interesting for future research is an error analysis of the finite element method.  
\\[2ex]

\appendix

\section{A regularity result}\label{AppendixReg}

In this section we address the regularity assumption  $\ddt{\bu} \in L^2(I;L^2(\Omega)^d)$  for the solution $\bu$ of \eqref{defproblem}, which is discussed in Remark \ref{remregularity}. We will show that assumption \eqref{condBnew}, \eqref{condA}, together with the regularity assumptions $F \in L^2(I;H)'$ and $\bw\in C^1(\bar I;C^2(\bar \Omega)) \cap X$ imply the required smoothness $\ddt{\bu} \in L^2(I;L^2(\Omega)^d)$. 

Let $\Phi $ be the Lagrangian flow of $\bw$ as in \eqref{flowPhi} . We will denote $\Phi(\cdot,t)$ by $\Phi_t$. The corresponding inverse mapping is given by $\Phi_t^{-1}(x)=y$, $x \in \Omega$. Note that the mapping $(x,t) \to \Phi_t^{-1}(x)$ has 
smoothness $ C^2(\bar Q)$. The Lagrangian mapping $\Phi_t$ induces a bijective $C^2$ diffeomorphism 
\[
  \cF:\, \bar Q=\bar \Omega \times \bar I \rightarrow \bar Q,\quad \cF(y,t):=(\Phi_t(y),t)=(x,t).
\]
By construction we have for any differentiable function $g$ on $Q$ that $\dot g = \ddt{(g \circ \cF)}\circ \cF^{-1}$, which expresses that $\dot g$ is the material derivative corresponding to the flow field $\bw$. As outlined in Remark \ref{remregularity}, while transforming coordinates based on the mapping $\cF$ we want to conserve the divergence free property of a vector function. For this we recall the Piola  transformation. For a given vector field $\bz \in H^1(\Omega)^d$ and a given diffeomorphism $\Psi: \, \Omega \rightarrow \Omega$,  the Piola mapping $P_\Psi$ is given by:
\[
 (P_\Psi \bz)(y):= \frac{1}{\det J\Psi(x)}J\Psi(x)\bz(x), \quad x \in \Omega,~~y := \Psi(x),
\]
where $J\Psi$ denotes the Jacobian of $\Psi$. 
This mapping has the property
\[
\Div (P_\Psi \bz)(y)= \frac{1}{\det J\Psi(x) } \Div \bz(x), \quad x\in \Omega.
\]
We introduce an isomorphism $P_\cF: \, L^2(I;L^2(\Omega)^d) \rightarrow  L^2(I;L^2(\Omega)^d)$, which is the application, for each $t \in I$, of the Piola transformation with $\Psi= \Phi_t^{-1}$:
\[
  (P_\cF\bu)(y,t)= \frac{1}{\det J\Phi_t^{-1}(x)} J\Phi_t^{-1}(x)\bu(x,t)=: A(x,t)\bu(x,t) ,
\]
with $y=\Phi_t^{-1}(x)$, $A(x,t)= \frac{1}{\det J\Phi_t^{-1}(x)} J\Phi_t^{-1}(x)$.  More compactly, we can write $P_\cF \bu = (A\bu)\circ \cF$. Its inverse is given by $P_\cF^{-1}\bu =A^{-1}\bu \circ \cF^{-1}$. Note that if $\Div \bu(x,t)=0$ then $\Div (P_\cF\bu)(y,t)=0$. For $\bu\in U_0$, we define
\[
 \bu':= P_\cF^{-1}(\frac{\partial }{\partial t} P_\cF\bu)= A^{-1} \dot A \bu +\dot \bu=: R \bu +\dot \bu.
\]
An important point to note is that if $\bu$ is divergence free then $\bu'$ is also divergence free, i.e., $\bu' \in L^2(I;H)$ if $\bu \in U_0$. We also note that $\bw|_{\partial \Omega} = \bu|_{\partial \Omega}=0$ implies that $\bu'|_{\partial \Omega}=0$, if the latter is defined. Concerning  the regularity of $R=A^{-1} \dot A$, we note that  $A^{-1}\dot  A \in C^1(\bar Q)^{d\times d}$, which can be concluded from the following. Since $\cF,\cF^{-1}$ are $C^2$ diffeomorphisms, we obtain that $A^{-1}, A \in C^1(\bar Q)^{d\times d}$. Hence, it suffices to verify that 
 $\dot{\overbrace{J\Phi_t^{-1}(x)}} \in C^1(\bar Q)^{d \times d}$. Note that $\dot{\overbrace{\Phi_t^{-1}(x)}} = 0$ and for any $i=1,\dots, d$
\[
0= \deriv{}{x_i} \dot{\overbrace{\Phi_t^{-1}(x)}}  =  \dot{\overbrace{\deriv{}{x_i} \Phi_t^{-1}(x)}} + \deriv{}{x_i} \bw(x,t) \cdot \nabla \Phi_t^{-1}(x).
\]
Using that $\bw \in C^1(\bar I; C^2(\bar \Omega)^d)$, we can conclude that $\dot{\overbrace{J\Phi_t^{-1}(x)}} \in C^1(\bar Q)^{d \times d}$.

Using these preliminaries we can derive the following theorem.\\

\begin{theorem} \label{thmregularity}
Let the assumptions of Theorem~\ref{mainthm1}  hold and let $\bu \in V^0$ be the unique solution of \eqref{defproblem}. 
Assume that the bilinear form $a(t;\cdot,\cdot)$ is defined on $\cV\times H^1_0(\Omega)^d$ and satisfies \eqref{condBnew}, \eqref{condA}. Furtermore, assume  that $F \in L^2(I;H)'$, $\bw\in C^1(\bar I;C^2(\bar \Omega)) \cap X$ .  Then $\bu$ has the smoothness property $\ddt{\bu} \in L^2(I;L^2(\Omega)^d)$.
\end{theorem}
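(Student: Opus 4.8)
The plan is to make rigorous the formal test-function computation sketched in Remark~\ref{remregularity}. Heuristically one would insert $\bv=\bu'=\dot\bu+R\bu$ into \eqref{defproblem}. By construction $P_\cF$ and $P_\cF^{-1}$ preserve divergence free fields, so $\bu'$ is divergence free (although $\dot\bu$ alone is not), and $\|\bu'-\dot\bu\|_{L^2}\le C\|\bu\|_{L^2}$ by \eqref{Pbounds}; using $\dot\rho=0$ one obtains
\[
  F(\bu')=\|\rho^{\frac12}\dot\bu\|_{L^2}^2+\la\rho\dot\bu,R\bu\ra+\int_0^T a(t;\bu(t),\dot\bu(t))\,dt+\int_0^T a(t;\bu(t),R\bu(t))\,dt .
\]
Then $\int_0^T a(t;\bu,\dot\bu)\ge-M\|\bu\|_X^2$ by \eqref{condA}, the last integral is bounded by $\tilde\Gamma\|R\|_{C^1(\bar Q)}\|\bu\|_X^2$ by \eqref{condBnew}, $\la\rho\dot\bu,R\bu\ra\le\rho_{\max}\|R\|_{C^0(\bar Q)}\|\dot\bu\|_{L^2}\|\bu\|_{L^2}$, and $F(\bu')\le\|F\|_{L^2(I;H)'}\|\bu'\|_{L^2}\le\|F\|_{L^2(I;H)'}(\|\dot\bu\|_{L^2}+C\|\bu\|_{L^2})$. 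Using $\|\rho^{\frac12}\dot\bu\|_{L^2}^2\ge\rho_{\min}\|\dot\bu\|_{L^2}^2$, a Young inequality to absorb the terms linear in $\|\dot\bu\|_{L^2}$, the embedding $X\hookrightarrow L^2(I;H)$ (so $\|F\|_{X'}\le c\|F\|_{L^2(I;H)'}$) and the a priori bound $\|\bu\|_X\le\|\bu\|_V\le c\|F\|_{X'}$ from Theorem~\ref{mainthm1}, this would give $\|\dot\bu\|_{L^2}\le c\|F\|_{L^2(I;H)'}$, and hence $\ddt{\bu}=\dot\bu-\bw\cdot\nabla\bu\in L^2(I;L^2(\Omega)^d)$ since $\bw\in L^\infty(Q)^d$.

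The obstruction to running this argument literally is that $\bu$ is a priori only known to lie in $V^0$, so $\dot\bu$, and therefore $\bu'$, is merely an element of $X'$ and cannot be used as a test function in \eqref{defproblem}; this is exactly the difficulty that $\dot\bu$ fails to be divergence free even though $\bu$ is. To remove it I would transform the whole problem by the Piola map. Set $\hat\bu:=P_\cF\bu$; since $\Phi_0=\mathrm{id}$ we have $A(\cdot,0)=I$, hence $\hat\bu(0)=\bu(0)=0$, and $P_\cF$ is an isomorphism of $X$ mapping divergence free fields to divergence free fields. Inserting $\dot\bu=\bu'-R\bu=P_\cF^{-1}\ddt{\hat\bu}-R\bu$ into \eqref{defproblem} and applying the Piola change-of-variables rule together with $\dot\rho=0$, the problem becomes equivalent to: find $\hat\bu$ (in the transformed solution space) with $\hat\bu(0)=0$ and
\[
  \int_0^T(\hat M(t)\ddt{\hat\bu},\hat\bv)_{L^2(\Omega)}\,dt+r(\hat\bu,\hat\bv)+\int_0^T \hat a(t;\hat\bu(t),\hat\bv(t))\,dt=\hat F(\hat\bv)\qquad\text{for all }\hat\bv\in X,
\]
where $\hat M\in L^\infty(Q)^{d\times d}$ is symmetric and uniformly positive definite (the weight $\rho$ having become the time-independent $\rho\circ\cF$ by $\dot\rho=0$), $r(\cdot,\cdot)$ arising from $-(\rho R\bu,\bv)_{L^2}$ is a lower-order bilinear form bounded on $L^2(I;L^2(\Omega)^d)$, $\hat F(\hat\bv):=F(P_\cF^{-1}\hat\bv)\in L^2(I;H)'$, and $\hat a(t;\hat\bu,\hat\bv):=a(t;P_\cF^{-1}\hat\bu,P_\cF^{-1}\hat\bv)$ inherits \eqref{Cond1}--\eqref{Cond2} and \eqref{condBnew} (with modified constants, since $P_\cF^{\pm1}$ are bounded isomorphisms of $L^2(I;H_0^1(\Omega)^d)$). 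Moreover, since $\hat a(t;\hat\bv,\ddt{\hat\bv})=a(t;\bv,\dot\bv)+a(t;\bv,R\bv)$ for $\hat\bv=P_\cF\bv$, the conditions \eqref{condA} and \eqref{condBnew} yield the lower bound $\int_0^T \hat a(t;\hat\bv,\ddt{\hat\bv})\ge-M'\|\hat\bv\|_X^2$. The decisive gain is that after this transformation the coefficient discontinuity sits on the \emph{fixed} hypersurface $\Gamma(0)\times I$ and the time derivative is a genuine $\ddt{}$ acting on the $t$-independent space $\cV$, so the transformed problem has the product-type structure for which the standard regularity machinery applies.

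Concretely, I would set up a space-Galerkin scheme $\hat\bu_m\in L^2(I;\cV_m)$ for the transformed problem, with $\cV_m=\mathrm{span}\{\hat\bv_1,\dots,\hat\bv_m\}$, $\hat\bv_j\in C_0^1(\Omega)^d\cap\cV$; since $\cV_m$ does not depend on $t$, the function $\ddt{\hat\bu_m}\in L^2(I;\cV_m)$ is an admissible test function in the Galerkin equations. Testing with $\ddt{\hat\bu_m}$, using the coercivity of $\hat M$, the lower bound just obtained for $\int_0^T\hat a(t;\hat\bu_m,\ddt{\hat\bu_m})$, boundedness of $r$, $\hat F\in L^2(I;H)'$ and a Young inequality exactly as in the formal computation above, yields $\|\ddt{\hat\bu_m}\|_{L^2}\le c\|\hat F\|_{L^2(I;H)'}$ uniformly in $m$; passing to a weak limit (the basic energy estimate gives boundedness in $X$) produces a solution with $\ddt{\hat\bu}\in L^2(I;L^2(\Omega)^d)$, which by uniqueness equals $P_\cF\bu$. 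Transferring back, $\dot\bu=P_\cF^{-1}\ddt{\hat\bu}-R\bu\in L^2(I;L^2(\Omega)^d)$ because $P_\cF^{-1}$ is an isomorphism of $L^2(I;L^2(\Omega)^d)$ and $R=A^{-1}\dot A\in C^1(\bar Q)^{d\times d}$, and hence $\ddt{\bu}=\dot\bu-\bw\cdot\nabla\bu\in L^2(I;L^2(\Omega)^d)$, which is the claim.

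I expect the main obstacle to be the bookkeeping in these two reductions: verifying that \eqref{defproblem} is genuinely equivalent to the transformed problem with coefficients of the stated structure, and justifying the lower bound $\int_0^T\hat a(t;\hat\bv,\ddt{\hat\bv})\ge-M'\|\hat\bv\|_X^2$ (which is where \eqref{condA}, really an integration-by-parts-type property of $a$, is exploited). Since \eqref{condA} is only assumed on $U_0\cap H^2(Q)^d$, for the Galerkin functions one should first regularize $F$ in time so that $\hat\bu_m$ acquires the needed smoothness, derive the bound with constants independent of the regularization, and then remove it. The remaining ingredients (the change of variables, the boundedness of $P_\cF^{\pm1}$ and of $R$ recorded in the preamble, and the weak-limit passage) are routine.
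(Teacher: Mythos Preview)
Your proposal is correct in outline but implements the Piola idea differently from the paper. The paper stays in Eulerian coordinates throughout and never writes down a transformed problem: it observes that $L_\delta\bv:=\bv'+\delta\bv=P_\cF^{-1}\bigl((\ddt{}+\delta)\otimes\mathrm{id}\bigr)P_\cF\bv$ is an isomorphism $U_0\to X$, then runs a \emph{space-time} Galerkin on finite-dimensional subspaces $Z_m\subset U_0\cap H^2(Q)^d$ for the problem $(\rho\dot\bu_m,L_\delta\bv)_{L^2}+\int_0^T a(t;\bu_m,L_\delta\bv)\,dt=F(L_\delta\bv)$ for all $\bv\in Z_m$. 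Testing with $\bv=\bu_m$ and taking $\delta$ large yields in one stroke the coercivity $B(\bu_m,\bu_m)\ge\tfrac14\rho_{\min}\|\bu_m\|_U^2$: the term $(\rho\dot\bu_m,\dot\bu_m)_{L^2}$ controls $\|\dot\bu_m\|_{L^2}^2$, the shift $\delta\int_0^T a(t;\bu_m,\bu_m)\ge\gamma\delta\|\bu_m\|_X^2$ absorbs the negative contribution from \eqref{condA}, and the $R\bu_m$ remainder is lower order by \eqref{condBnew}. A weak limit in $U_0$ solves the equation for all $\bv\in U_0$, hence for all test functions in $X$ by the isomorphism property of $L_\delta$, and equals $\bu$ by uniqueness. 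Compared with your route (transform the equation to reference coordinates, spatial Galerkin, two separate test functions $\hat\bu_m$ and $\ddt{\hat\bu_m}$), the paper's argument is shorter: it avoids writing out and checking the structure of the transformed coefficients, the $\delta$-shift replaces the separate energy estimate plus Gronwall you would need for the time-dependent $\hat M(t)$, and choosing the space-time basis in $H^2(Q)^d$ from the outset makes \eqref{condA} applicable without the time-regularization of $F$ you anticipate. Your approach, in turn, is conceptually more transparent (it literally reduces to a stationary-interface problem) and reuses the spatial-Galerkin machinery of Theorem~\ref{thm1} more directly.
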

\begin{proof}
Define $H_{\{0\}}^1(I):= \{\, g\in H^1(I)~|~ g(0)=0\, \}$ and, for a parameter  $\delta >0$, the operator $\tilde L_\delta:
\, H_{\{0\}}^1(I) \rightarrow L^2(I)$ by $\tilde L_\delta:= \ddt{g} +\delta g$. One easily checks that $\tilde L_\delta$ is an isomorphism. Hence $\tilde L_{\delta,\otimes}:=\tilde L_\delta \otimes {\rm id}:\, H^1_{\{0\}}(I)\otimes \V \rightarrow L^2(I) \otimes \V = X$ is an isomorphism.
Furthermore, since $A\in C^1(\bar Q)^{d\times d}, \cF\in C^2(\bar Q)^{d+1}$, the isomorphism $P_\cF:X\rightarrow X$ is also an isomorphism on the space $H^1_{\{0\}}(I)\otimes \V = \{\, \bv \in H^1(I;H^1_0(\Omega)^d)~|~\div \bv =0,~~\bv(x,0)=0\, \}$. From these observations we conclude that for $L_\delta := P_\cF^{-1} \circ \tilde L_{\delta, \otimes} \circ P_\cF$ we have
\begin{equation} \label{isom}
L_\delta\,:\,H^1_{\{0\}}(I)\otimes \V  \rightarrow X, \quad L_\delta \bu=  \bu' + \delta \bu
\quad \text{is an isomorphism.}
\end{equation}
In the Hilbert space $U_0 =H^1_{\{0\}}(I)\otimes \V$ with the norm $\|\bu\|_U^2=\|\frac{\partial \bu}{\partial t}\|_{L^2}^2+\|\bu\|_X^2$, we take a
total (orthonormal) set denoted by $(\bv_k)_{k \geq 1}$, and define $Z_m:={\rm span}\{\bv_1, \ldots, \bv_m\}$. Since smooth functions are dense in $U_0$, we can  assume that $\bv_m \in H^2(Q)^d$. We consider the following problem: determine $\bu_m \in Z_m$  such that:
 \begin{equation} \label{AA}
  (\rho \dot \bu_m,L_{\delta} \bv )_{L^2} + \int_0^T a(t;\bu_m(t), L_{\delta}\bv(t) ) \, dt =F(L_{\delta}\bv) \quad \forall~\bv \in Z_m.
 \end{equation}
For the bilinear form on the left hand-side we introduce the notation $B(\bu,\bv):=(\rho \dot \bu,L_{\delta} \bv )_{L^2} + \int_0^T a(t;\bu(t), L_{\delta}\bv(t) ) \, dt $. Recall that $\bu'=R\bu+\dot \bu$ and note that
\[
 \|R\bu\|_{L^2(I;H_0^1(\Omega)^d)} \leq C \|\bu\|_X,~~\|R \bu\|_{L^2} \leq C\|\bu\|_{L^2}.
\]
Using this, the assumptions \eqref{condA}-\eqref{condBnew} and $(\rho \dot \bu,\bu)_{L^2} \geq 0$  for  $\bu \in Z_m$, we get:
\begin{align*}
 B(\bu,\bu) =& (\rho \dot \bu, \bu')_{L^2} +\delta (\rho \dot \bu,\bu)_{L^2} + \int_0^T a(t;\bu,\bu'+\delta \bu)\, dt \\
  \geq & \rho_{\min}\|\dot \bu\|_{L^2}^2 +\gamma \delta\|\bu\|_X^2 - \rho_{\max}\|\dot \bu\|_{L^2}\|R \bu\|_{L^2} 
 \\ & - \tilde \Gamma \|R \bu\|_{L^2(I;H_0^1(\Omega)^d)} \|\bu\|_X - M\|\bu\|_X^2\\
  \geq &\frac12\rho_{\min} \|\dot \bu\|_{L^2}^2 +\gamma \delta\|\bu\|_X^2 - c \|\bu\|_{X}^2 ,
 \\
  \geq &\frac14\rho_{\min} \|\frac{\partial \bu}{\partial t}\|_{L^2}^2 +\gamma \delta\|\bu\|_X^2 -  \rho_{min}\|\bw\|_{L^{\infty}}^2\|\bu\|_X^2 - c \|\bu\|_{X}^2 ,\quad \text{for all}~\bu \in Z_m,
\end{align*}
with a constant $c$ independent of $\bu$ and $\delta$. Now we take  $\delta >0$, sufficiently large, such that 
\[
 B(\bu,\bu) \geq \frac14\rho_{\min}\|\bu\|_U^2 \quad \text{for all}~\bu \in Z_m
\]
holds. Hence the problem \eqref{AA} has a unique solution $\bu_m \in Z_m$ and 
\[
 \|\bu_m\|_{U}^2 \leq 4 \rho_{\min}^{-1} \|F\|_{L^2(I;H)'}(\|\dot \bu\|_{L^2}+\|R\bu \|_{L^2} +\delta \|\bu\|_{L^2}) \leq c' \|F\|_{L^2(I;H)'}\|\bu_m\|_{U}
\]
holds for some $c'>0$, which depends on $\rho_{min},\delta,\|\bw \|_{L^{\infty}}$ and $C$. Hence, $(\bu_m)_{m \geq 1} \subset U_0 $ has a subsequence, also denoted by $(\bu_m)_{m \geq 1}$, which weakly converges to some $\bu\in U_0$:
\[
 \bu_m \rightharpoonup \bu \quad \text{in}~~X, \quad \ddt{ \bu_m} \rightharpoonup \ddt{\bu} \quad \text{in}~~L^2(Q).
\]
  We conclude that $\bu \in X$ has smoothness $ \ddt{\bu} \in L^2(I;L^2(\Omega)^d)$, and taking the weak limit in \eqref{AA} and using continuity it follows that $\bu$ satisfies
 \begin{equation} \label{AAA}
  (\rho \dot \bu,L_{\delta} \bv )_{L^2} + \int_0^T a(t;\bu(t), L_{\delta}\bv(t) ) \, dt =F(L_{\delta}\bv) \quad \forall~\bv \in U_0.
 \end{equation}
 Using the isomorphism property \eqref{isom} we finally conclude that $\bu$ satisfies
 \[
   (\rho \dot \bu, \bv )_{L^2} + \int_0^T a(t;\bu(t), \bv(t) ) \, dt = F(\bv) \quad \forall~\bv \in X,
 \]
 and thus coincides with the unique solution of \eqref{defproblem}.
\end{proof}

\bibliographystyle{siam}
\bibliography{LitWP}

\begin{thebibliography}{10}

\bibitem{Abels2007}
{\sc H.~Abels}, {\em On generalized solutions of two-phase flows for viscous
  incompressible fluids}, Interfaces and Free Boundaries,  (2007), pp.~31--65.

\bibitem{HoFAbels2016}
{\sc H.~Abels and H.~Garcke}, {\em Weak Solutions and Diffuse Interface Models
  for Incompressible Two-Phase Flows}, Springer International Publishing, Cham,
  2016, pp.~1--60.

\bibitem{Matthies}
{\sc N.~Ahmed, S.~Becher, and G.~Matthies}, {\em Higher-order discontinuous
  {Galerkin} time stepping and local projection stabilization techniques for
  the transient {Stokes} problem}, Computer Methods in Applied Mechanics and
  Engineering, 313 (2017), pp.~28--52.

\bibitem{Alt2016}
{\sc H.~W. Alt}, {\em Linear Functional Analysis}, Springer London, 2016.

\bibitem{Bansch2001}
{\sc E.~B\"{a}nsch}, {\em Finite element discretization of the
  {Navier}--{Stokes} equations with a free capillary surface}, Numerische
  Mathematik, 88 (2001), pp.~203--235.

\bibitem{Hansbo2009}
{\sc R.~Becker, E.~Burman, and P.~Hansbo}, {\em A {Nitsche} extended finite
  element method for incompressible elasticity with discontinuous modulus of
  elasticity}, Computer Methods in Applied Mechanics and Engineering, 198
  (2009), pp.~3352--3360.

\bibitem{BotheReusken}
{\sc D.~Bothe and A.~Reusken}, {\em Transport Processes at Fluidic Interfaces},
  Advances in Mathematical Fluid Mechanics, Birkh\"auser, 2017.

\bibitem{Burman2015}
{\sc E.~Burman, S.~Claus, P.~Hansbo, M.~G. Larson, and A.~Massing}, {\em
  {CutFEM}: Discretizing geometry and partial differential equations},
  International Journal for Numerical Methods in Engineering, 104 (2015),
  pp.~472--501.

\bibitem{Burman2012}
{\sc E.~Burman and P.~Hansbo}, {\em Fictitious domain finite element methods
  using cut elements: {II}. a stabilized {Nitsche} method}, Applied Numerical
  Mathematics, 62 (2012), pp.~328--341.

\bibitem{Hansbo2014}
\leavevmode\vrule height 2pt depth -1.6pt width 23pt, {\em Fictitious domain
  methods using cut elements: {III}. a stabilized {Nitsche} method for
  {Stokes}{\rq} problem}, ESAIM: Mathematical Modelling and Numerical Analysis,
  48 (2014), pp.~859--874.

\bibitem{Zou1998}
{\sc Z.~Chen and J.~Zou}, {\em Finite element methods and their convergence for
  elliptic and parabolic interface problems}, Numerische Mathematik, 79 (1998),
  pp.~175--202.

\bibitem{Crippa2009}
{\sc G.~Crippa}, {\em The flow associated to weakly differentiable vector
  fields}, Edizioni della Normale, Pisa, 2009.

\bibitem{Croce2010}
{\sc R.~Croce, M.~Griebel, and M.~A. Schweitzer}, {\em Numerical simulation of
  bubble and droplet deformation by a level set approach with surface tension
  in three dimensions}, International Journal for Numerical Methods in Fluids,
  62 (2010), pp.~963--993.

\bibitem{Solonnikov1995}
{\sc I.~V. Denisova and V.~A. Solonnikov}, {\em Classical solvability of the
  problem of the motion of two viscous incompressible fluids}, St. Petersburg
  Mathematical Journal, 7 (1996), pp.~755--786.

\bibitem{Solonnikov2012}
\leavevmode\vrule height 2pt depth -1.6pt width 23pt, {\em Global solvability
  of a problem governing the motion of two incompressible capillary fluids in a
  container}, Journal of Mathematical Sciences, 185 (2012), pp.~668--686.

\bibitem{DiPernaLions}
{\sc R.~J. DiPerna and P.-L. Lions}, {\em Ordinary differential equations,
  transport theory and {Sobolev} spaces}, Inventiones Mathematicae, 98 (1989),
  pp.~511--547.

\bibitem{ErnGuermond}
{\sc A.~Ern and J.~Guermond}, {\em Theory and Practice of Finite Elements},
  Springer New York, 2013.

\bibitem{Evans}
{\sc L.~Evans}, {\em Partial Differential Equations}, American Mathematical
  Society, 2010.

\bibitem{Falk}
{\sc R.~S. Falk and M.~Neilan}, {\em {Stokes} complexes and the construction of
  stable finite elements with pointwise mass conservation}, SIAM Journal on
  Numerical Analysis, 51 (2013), pp.~1308--1326.

\bibitem{Fries2010}
{\sc T.-P. Fries and T.~Belytschko}, {\em The extended/generalized finite
  element method: an overview of the method and its applications},
  International Journal for Numerical Methods in Engineering, 84 (2010),
  pp.~253--304.

\bibitem{Grande2015}
{\sc J.~Grande}, {\em Finite element discretization error analysis of a general
  interfacial stress functional}, {SIAM} Journal on Numerical Analysis, 53
  (2015), pp.~1236--1255.

\bibitem{Reusken}
{\sc S.~{Gro{\ss}} and A.~Reusken}, {\em Numerical Methods for Two-phase
  Incompressible Flows}, Springer Berlin Heidelberg, 2011.

\bibitem{SchwabStevenson1}
{\sc R.~Guberovic, C.~Schwab, and R.~Stevenson}, {\em Space-time variational
  saddle point formulations of {Stokes and Navier-Stokes} equations}, ESAIM:
  Mathematical Modelling and Numerical Analysis, 48 (2014), pp.~875--894.

\bibitem{Hansbo2002}
{\sc A.~Hansbo and P.~Hansbo}, {\em An unfitted finite element method, based on
  {Nitsche}{\rq}s method, for elliptic interface problems}, Computer Methods in
  Applied Mechanics and Engineering, 191 (2002), pp.~5537--5552.

\bibitem{HansboStokes}
{\sc P.~Hansbo, M.~G. Larson, and S.~Zahedi}, {\em A cut finite element method
  for a {Stokes} interface problem}, Applied Numerical Mathematics, 85 (2014),
  pp.~90--114.

\bibitem{John}
{\sc V.~John, A.~Linke, C.~Merdon, M.~Neilan, and L.~Rebholz}, {\em On the
  divergence constraint in mixed finite element methods for incompressible
  flows}, SIAM Review, 59 (2017), pp.~492--544.

\bibitem{Kirchhart2016}
{\sc M.~Kirchhart, S.~{Gro{\ss}}, and A.~Reusken}, {\em {Analysis of an {XFEM}
  Discretization for {Stokes} Interface Problems}}, SIAM Journal on Scientific
  Computing, 38 (2016), pp.~A1019--A1043.

\bibitem{Lehrenfeld2015}
{\sc C.~Lehrenfeld}, {\em The {Nitsche XFEM-DG} space-time method and its
  implementation in three space dimensions}, SIAM Journal on Scientific
  Computing, 37 (2015), pp.~A245--A270.

\bibitem{LRSINUM2013}
{\sc C.~Lehrenfeld and A.~Reusken}, {\em Analysis of a {Nitsche XFEM-DG}
  discretization for a class of two-phase mass transport problems}, SIAM
  Journal on Numerical Analysis, 51 (2013), pp.~958--983.

\bibitem{Olshanskii2018}
{\sc A.~Lozovskiy, M.~A. Olshanskii, and Y.~V. Vassilevski}, {\em {A
  quasi-Lagrangian finite element method for the Navier--Stokes equations in a
  time-dependent domain}}, Computer Methods in Applied Mechanics and
  Engineering, 333 (2018), pp.~55--73.

\bibitem{Nouri1995}
{\sc A.~Nouri and F.~Poupaud}, {\em An existence theorem for the multifluid
  {Navier-Stokes} problem}, Journal of Differential Equations, 122 (1995),
  pp.~71--88.

\bibitem{Nouri1997}
{\sc A.~Nouri, F.~Poupaud, and Y.~Demay}, {\em An existence theorem for the
  multi-fluid {Stokes} problem}, Quarterly of Applied Mathematics, 55 (1997),
  pp.~421--435.

\bibitem{Pruss2009}
{\sc J.~Pr\"{u}ss and G.~Simonett}, {\em On the two-phase {Navier-Stokes}
  equations with surface tension}, arXiv preprint arXiv:0908.3327,  (2009).

\bibitem{Pruss2011}
\leavevmode\vrule height 2pt depth -1.6pt width 23pt, {\em Analytic solutions
  for the two-phase {Navier-Stokes} equations with surface tension and
  gravity}, Springer Basel, 2011, pp.~507--540.

\bibitem{Pruss2016}
\leavevmode\vrule height 2pt depth -1.6pt width 23pt, {\em Moving Interfaces
  and Quasilinear Parabolic Evolution Equations}, Birkh{\"a}user, 2016.

\bibitem{Saal}
{\sc J.~Saal}, {\em Maximal regularity for the {Stokes} system on
  noncylindrical space-time domains}, Journal of the Mathematical Society of
  Japan, 58 (2006), pp.~617--641.

\bibitem{Takahashi2009}
{\sc J.~{San Mart\'{\i}n}, L.~Smaranda, and T.~Takahashi}, {\em {Convergence of
  a finite element/ALE method for the Stokes equations in a domain depending on
  time}}, Journal of Computational and Applied Mathematics, 230 (2009),
  pp.~521--545.

\bibitem{SchwabStevenson2}
{\sc C.~Schwab and R.~Stevenson}, {\em Fractional space-time variational
  formulations of ({Navier}--) {Stokes} equations}, SIAM Journal on
  Mathematical Analysis, 49 (2017), pp.~2442--2467.

\bibitem{Solonnikov2007}
{\sc V.~A. Solonnikov}, {\em On the problem of non-stationary motion of two
  viscous incompressible liquids}, Journal of Mathematical Sciences, 142
  (2007), pp.~1844--1866.

\bibitem{Steinbach2018}
{\sc O.~Steinbach and H.~Yang}, {\em Comparison of algebraic multigrid methods
  for an adaptive space--time finite-element discretization of the heat
  equation in {3D} and {4D}}, Numerical Linear Algebra with Applications,
  (2018).
\newblock http://dx.doi.org/10.1002/nla.2143.

\bibitem{Temam}
{\sc R.~Temam}, {\em {Navier}-{Stokes} Equations: Theory and Numerical
  Analysis}, North-Holland Publishing Company, 1977.

\bibitem{Thomee}
{\sc V.~Thom\'{e}e}, {\em {Galerkin} Finite Element Methods for Parabolic
  Problems}, Springer-Verlag New York, Inc., 2006.

\bibitem{Wloka}
{\sc J.~Wloka}, {\em {Partial Differential Equations}}, Cambridge University
  Press, 1987.

\end{thebibliography}

\end{document}